\documentclass[11pt,twoside]{article}
\usepackage{fix-cm}
\usepackage{amsmath,amssymb,amsfonts,amsthm,mathrsfs}
\usepackage{graphicx,float,needspace,accents,geometry}
\usepackage[pagewise]{lineno}
\usepackage{fancyhdr,caption,titlesec,setspace,indentfirst}
\usepackage{abstract}

\usepackage[numbers,sort]{natbib}
\usepackage{enumitem}
\setlist[enumerate]{label=\textnormal{(\arabic*)},leftmargin=3em,
  labelsep=0.5em,itemsep=0pt,partopsep=0pt,parsep=0pt,topsep=3pt}
\setitemize[1]{itemsep=-4pt,partopsep=0pt,parsep=\parskip,topsep=5pt}
\setdescription{itemsep=-4pt,partopsep=0pt,parsep=\parskip,topsep=5pt}
\usepackage{microtype}
\usepackage[hyperfootnotes=false,colorlinks=true,linkcolor=blue,citecolor=blue,urlcolor=blue]{hyperref}
\allowdisplaybreaks
\makeatletter
\renewcommand{\section}{\@startsection{section}{1}{0mm}
{-\baselineskip}{0.5\baselineskip}{\normalfont\normalsize\bfseries}}
\renewcommand{\subsection}{\@startsection{subsection}{2}{0mm}
{-\baselineskip}{0.5\baselineskip}{\normalfont\normalsize\bfseries}}
\makeatother
\newcommand{\footnoteONLYtext}[1]{\begingroup
\renewcommand{\thefootnote}{}\footnotetext{#1}\endgroup}
\newcommand{\keywords}[1]{{\it Keywords}: #1}
\newcommand{\msc}[1]{{\it 2020 Mathematics Subject Classification}: #1}
\numberwithin{equation}{section}
\theoremstyle{plain}
\newtheorem{theoalph}{Theorem}

\newtheorem{prop}{Proposition}[section]
\newtheorem{lemm}[prop]{Lemma}
\newtheorem{coro}[prop]{Corollary}
\newtheorem{fact}[prop]{Fact}
\theoremstyle{definition}
\newtheorem{defi}[prop]{Definition}
\theoremstyle{remark}

\newcommand{\CC}{\widehat{\mathbb C}}

\newcommand{\sA}{\mathscr A}
\newcommand{\cG}{\mathcal G}
\newcommand{\cW}{\mathcal W}
\newcommand{\Crit}{\operatorname{Crit}}
\newcommand{\CJ}{\Crit'(f)}
\newcommand{\Jcon}{J_{\mathrm{con}}}
\newcommand{\HD}{\operatorname{HD}}
\newcommand{\Mod}{\operatorname{mod}}
\newcommand{\diam}{\operatorname{diam}}
\newcommand{\dist}{\operatorname{dist}}
\newcommand{\tB}{\widetilde B}
\newcommand{\hV}{\widehat V}
\newcommand{\Ecrit}{E_{\mathrm{crit}}(f)}

\title{\Large\bfseries Absolute winning properties of exceptional sets for rational maps}
\author{Haoyang Ji}
\date{}
\begin{document}
\maketitle
\vspace{-1cm}
\begin{abstract}
\noindent\textbf{Abstract.} We study Schmidt's game on the Julia set $J(f)$ of weak
hyperbolic rational maps $f:\CC\to\CC$. For each $y\in J(f)$,
let $E(f,y)$ be the set of points in $J(f)$ whose forward
orbits stay a positive distance from $y$. We prove that
$E(f,y)$ is absolute winning on $J(f)$ for each $y \in J(f)$. To do this, we first prove the absolute winning property for
the exceptional set associated to the critical points by a
potential game argument, and then use uniform expansion to
obtain the result for each target point $y$.
\end{abstract}
\footnoteONLYtext{Date: \today}
\footnoteONLYtext{\msc{37F10, 11J83}}
\footnoteONLYtext{\keywords{Julia set, Schmidt's game, exceptional set, absolute winning, backward contraction}}
\footnoteONLYtext{H. Ji was supported by NSFC Grant No.12301103.}

\section{Introduction}

Let $f:\CC\to\CC$ be a rational map of degree at least two,
and let $J(f)$ be its Julia set. Write
\[
 \mathcal O_f^+(x)=\{f^n(x):n\ge0\}.
\]
For $y\in J(f)$, define
\begin{equation}\label{eq:avoid}
 E(f,y)=\{x\in J(f):y\notin\overline{\mathcal O_f^+(x)}\}
 =\{x\in J(f):\inf_{n\ge0}d(f^n(x),y)>0\}.
\end{equation}
For $A\subset J(f)$, define
\begin{equation}\label{eq:exceptional-set}
 E(f,A)=\{x\in J(f):\overline{\mathcal O_f^+(x)}\cap A=\varnothing\}
 =\bigcap_{y\in A}E(f,y).
\end{equation}
Following~\cite{CG}, we call $E(f,A)$ the
$A$-exceptional set.
All closures and distances are taken in the spherical metric.

We mainly consider rational maps $f$ satisfying the weak
hyperbolicity conditions studied in~\cite{RLS}, namely
{\it expansion away from critical points} and {\it backward contraction}.
Under these assumptions, for each $y\in J(f)$, the exceptional
set $E(f,y)$ has zero spherical area measure. We shall prove that $E(f,y)$ is absolute winning on $J(f)$
for each $y\in J(f)$. Consequently, these sets are winning
for Schmidt's game and dense in $J(f)$, and their countable
intersections remain absolute winning. Under our hypotheses,
they also have full Hausdorff dimension in $J(f)$,
see~\cite[Corollary~1.3]{CG} and \cite{RLS}.

There are many results on the winning properties of exceptional
sets for various games and dynamical systems. Tseng~\cite{Tseng} established the Schmidt winning property
for $C^2$ expanding circle maps, and F\"arm~\cite{Farm} obtained
analogous results for a class of expanding Markov interval maps,
including the Gauss map. For $C^{1+\theta}$ conformal expanding
endomorphisms of compact Riemannian manifolds, Wu~\cite{Wu}
obtained the stronger property of absolute winning. For a class
of nonuniformly expanding interval maps including
Manneville--Pomeau maps, Duvall~\cite{Duvall} proved that
$E(f,0)$ is strong winning, where $0$ is the neutral fixed point.
In a recent preprint, Duvall~\cite{Duvall26} proved that
$E(f,Y)$ is absolute winning for every countable set
$Y\subset[0,1]$ for a class of Manneville--Pomeau type interval maps.
Hu-Li-Yu~\cite{HLY} proved that $E(f,y)$ is
$1/2$-strong winning for piecewise $C^{1+\delta}$ expanding
interval maps with finitely many branches.
For the relations between Schmidt winning, absolute winning,
potential winning, and Cantor winning, see
Badziahin et al.~\cite{BHNS}. For expanding rational maps, Abercrombie and Nair~\cite{AN}
proved that $E(f,y)$ has full Hausdorff dimension in $J(f)$.
For general rational maps, Campos and Gelfert~\cite{CG}
obtained lower bounds in terms of the hyperbolic dimension,
as well as full topological entropy results.

Our proof is motivated by the potential game approach
of Wu~\cite{Wu}. However, our system is not uniformly expanding because
of the presence of critical points in $J(f)$.
Thus the uniform expansion estimates used in~\cite{Wu}
are no longer available. 
To overcome this, we use the {\it polynomial shrinking} estimates
and {\it nice couples} construction in~\cite{RLS} to prove that the exceptional
set associated to the critical points in $J(f)$ is absolute winning. We then use uniform expansion away from
the critical points to obtain the result for every
$y\in J(f)$.

\subsection{Notation}

For $x,y\in\CC$ and nonempty subsets $A,B\subset\CC$, we denote
the distance between $x$ and $y$ by $d(x,y)$, the distance
between $A$ and $B$ by $\dist(A,B)$, and the diameter of $A$
by $\diam(A)$. We write $|Df(x)|$ for the norm of the
differential of $f$ at $x$. All these quantities are taken
with respect to the spherical metric.

Let $\sA_{\mathbb C}$ denote the class of rational maps
$f:\CC\to\CC$ of degree at least two without neutral periodic
points. For $f\in\sA_{\mathbb C}$, let $\Crit(f)$ denote the set
of critical points of $f$, that is, points at which $Df$ vanishes. For each $c \in \Crit(f)$, let $\ell_c\ge2$ denote the local degree of $f$ at $c$. Let $J(f)$ be its Julia set, which is a forward invariant compact set. Set
\[
 \Crit'(f)=\Crit(f)\cap J(f), \ 
 \ell_{\max}(f)=\max_{c\in\CJ}\ell_c.
\]
Whenever the latter
quantity is used, $\CJ$ is assumed nonempty.

We write $B(x,r)$ for an open
spherical ball and $\overline B_J(x,r)=J(f)\cap\overline{B(x,r)}$ for a closed
ball in the game space.

\begin{defi}\label{def:away}
We say that $f \in\sA_{\mathbb C}$ is {\it expanding away from critical points} if,
for every open neighbourhood $V$ of $\CJ$, the map $f$ is uniformly
expanding on
\[
 K(V)=\{x\in J(f):f^n(x)\notin V\text{ for every }n\ge0\}.
\]
Here uniform expansion on a compact forward invariant set $K$ means
that there are $C>0$ and $\lambda>1$ such that
$|Df^n(x)|\ge C\lambda^n$ for all $x\in K$ and $n\ge0$.
\end{defi}

We now recall the backward contraction property, introduced
in~\cite{RL} for rational maps and in~\cite{BRLSvS08} for
interval maps. A {\it pull-back} of a set $V$ by $f^n$ is a connected component of
$f^{-n}(V)$. For $c\in\Crit(f)$ and $\delta>0$, let
$\tB(c,\delta)$ be the component of
$f^{-1}(B(f(c),\delta))$ containing $c$. We also put
\[
 \tB(\CJ,\delta)=\bigcup_{c\in\CJ}\tB(c,\delta).
\]
There are constants $a_0,a_1>0$, depending on $f$, such that, for
every critical point $c$ and all sufficiently small $\delta>0$,
\begin{equation}\label{eq:local-degree}
 B(c,a_0\delta^{1/\ell_c})\subset\tB(c,\delta)
 \subset B(c,a_1\delta^{1/\ell_c}).
\end{equation}

\begin{defi}\label{def:bc}
Let $r>1$. We say that $f \in\sA_{\mathbb C}$ is {\it backward contracting with
exponent $r$}, abbreviated
$BC(r)$, if there is $\delta_0=\delta_0(r)>0$ such that, for every
$\delta\in(0,\delta_0)$, every $c\in\CJ$, every $n\ge0$, and
every pull-back $W$ of $\tB(c,r\delta)$ by $f^n$,
\[
 \dist(W,f(\Crit(f)))\le\delta
  \ \Longrightarrow \  \diam(W)<\delta.
\]
The map $f$ is called {\it backward contracting}
if it satisfies $BC(r)$ for every $r>1$.
\end{defi}

A theorem of Ma\~n\'e asserts that every $C^3$ multimodal interval map with non-flat critical points is expanding away from critical points;
see~\cite{Man85}. Although the analogous statement for a map
$f\in\sA_{\mathbb C}$ is false in general, it does hold if we
assume in addition that $f$ is a polynomial that is at most
finitely renormalizable; see~\cite{KvS09}, and also~\cite{QY09}
for the totally disconnected case. Moreover, the following fact gives sufficient conditions for expansion
away from critical points and backward contraction.

\begin{fact}\label{fact:backward-contracting}
A map $f\in\sA_{\mathbb C}$ is expanding away from critical points and backward contracting if one of the following holds:
\begin{enumerate}

\item
$f$ is a polynomial that is at most
finitely renormalizable and, for every $c\in\Crit'(f)$,
\[
 |Df^n(f(c))|\longrightarrow\infty
  \ \text{as }n\to\infty.
\]

\item
$f$ is a rational map such that,
for every $c\in\Crit'(f)$,
\[
 \sum_{n=0}^{\infty}|Df^n(f(c))|^{-1}<\infty.
\]
\end{enumerate}
\end{fact}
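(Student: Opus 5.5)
This Fact is a compilation of known results, and I would prove it by reducing each hypothesis to the literature and then assembling the pieces. There are two properties to check in each case: backward contraction (Definition~\ref{def:bc} for every $r>1$) and expansion away from critical points (Definition~\ref{def:away}). I would first isolate a general mechanism that turns backward contraction into expansion away from critical points. After that, only backward contraction has to be checked case by case, except in case~(1), where a direct citation is also available.

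For the mechanism, fix an open neighbourhood $V$ of $\CJ$ and consider the compact forward invariant set $K=K(V)$. Choose $\rho>0$ so small that every pull-back $W$ along an orbit in $K$ of a ball $B(x,2\rho)$, $x\in K$, is disjoint from $\Crit(f)$. The plan for this choice is as follows.
\begin{itemize}
\item A pull-back that is about to reach a critical point $c\in\CJ$ must first enter $\tB(c,\delta)$ for small $\delta$.
\item By $BC(r)$ with $r$ large, the earlier pull-backs then have diameter less than $\delta$.
\item Since the base point $f^j(x)$ of such a pull-back lies in $K$, it stays outside $V$. This contradicts $\tB(c,\delta)\subset V$ once $\delta$ is small relative to $V$.
\end{itemize}
Critical points outside $J(f)$ are handled because they lie at positive distance from $J(f)$ and their orbits converge to attracting cycles, since there are no neutral cycles.

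Once every such pull-back is univalent, Koebe distortion applies to the inverse branches $f^{-n}:B(f^n(x),2\rho)\to W_n\ni x$. Expansion then follows if $\diam W_n\to0$ uniformly in $x$, because Koebe converts this into $|Df^{n}(x)|\ge C\lambda^n$ after passing to a fixed iterate. For the shrinking I would use two inputs.
\begin{itemize}
\item Absence of wandering domains, together with the fact that $f\in\sA_{\mathbb C}$ has no parabolic or Siegel/Herman obstructions near $K$, gives shrinking of univalent pull-backs (Ma\~n\'e's lemma for rational maps).
\item Backward contraction itself gives the same shrinking more directly.
\end{itemize}
This step is where I expect the main obstacle: making the uniformity rigorous, with no wandering domains and no rotation domains meeting $K$. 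If needed, I would simply quote the corresponding statement of~\cite{RLS} that backward contraction implies expansion away from critical points for maps in $\sA_{\mathbb C}$.

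For case~(2), the summability condition $\sum_n|Df^n(f(c))|^{-1}<\infty$ implies backward contraction by Rivera-Letelier's theorem~\cite{RL}. The idea is that summability controls the sizes of pull-backs of $\tB(c,r\delta)$ through a telescoping Koebe estimate along the critical orbits, yielding $BC(r)$ for every $r$. The previous paragraph then gives expansion away from critical points.

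For case~(1), expansion away from critical points is Kozlovski--van Strien~\cite{KvS09}, and~\cite{QY09} covers the totally disconnected case. Backward contraction follows from the polynomial version of~\cite{RL}, which says that $|Df^n(f(c))|\to\infty$ together with at most finite renormalizability implies $BC(r)$ for all $r>1$. That result uses the enhanced nest / complex bounds available for finitely renormalizable polynomials to get arbitrarily large moduli around the critical values. I would verify that the hypotheses match, in particular the absence of neutral cycles encoded in $\sA_{\mathbb C}$, and conclude.
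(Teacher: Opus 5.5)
Your use of \cite[Theorem~A]{RL} for backward contraction in case~(2), and of \cite{KvS09} for expansion in case~(1), agrees with the paper. The ``general mechanism'' you rely on for expansion in case~(2), however, is a genuine gap.

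Definition~\ref{def:bc} only gives information about pull-backs of critical neighbourhoods $\tB(c,r\delta)$ with $\delta<\delta_0(r)$. The pull-backs you must control are those of $B(f^n(x),2\rho)$ with $f^n(x)\in K(V)$, which is a ball at definite distance from $\CJ$. Your bullet chain never places these inside pull-backs of such critical neighbourhoods; it applies backward contraction to sets where the definition says nothing. The shrinking step has the same defect. Ma\~n\'e's lemma gives shrinking only at points outside $\omega(c)$ for recurrent critical points $c$, and points of $K(V)$ in such $\omega(c)$ are exactly the hard ones. The shrinking that \cite[Theorem~A]{RLS} gets from backward contraction is proved under the \emph{additional} hypothesis of expansion away from critical points, so using it here is circular.

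Your sketch also never addresses a structural obstruction. The class $\sA_{\mathbb C}$ excludes neutral cycles but not Herman rings, so your remark that Fatou critical orbits converge to attracting cycles is false in general. At points of the boundary of a Herman ring, polynomial shrinking fails. Pull-backs of a ball centred there contain iterated preimages under the ring's rotation of a fixed compact piece of the ring, and these do not shrink. Backward contraction together with expansion away from critical points implies polynomial shrinking. Hence any proof of your implication must in particular show that backward contracting maps in $\sA_{\mathbb C}$ have no Herman rings.

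Nor can you fall back on \cite{RLS}: there, as in this paper, expansion away from critical points is a separate hypothesis in the complex case. The paper avoids all this by deriving expansion in case~(2) directly from the summability condition, via \cite[proof of Lemma~3.1]{Prz98}. Finally, in case~(1) the implication from $|Df^n(f(c))|\to\infty$ to backward contraction for at most finitely renormalizable polynomials is \cite[Theorem~A]{LS10}. It is not a polynomial version of \cite{RL}, whose Theorem~A is the summability result used in case~(2).
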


\begin{proof}
The backward contraction property follows from
\cite[Theorem~A]{LS10} in the first case and from
\cite[Theorem~A]{RL} in the second case.
Expansion away from critical points follows from
\cite{KvS09} or \cite[Corollary~8.3]{RL} in the first case,
and from \cite[proof of Lemma~3.1]{Prz98} in the second case.
\end{proof}

\subsection{Statement of results}\label{sec:statements}

Our main result is the following.

\begin{theoalph}\label{thm:rls}
Let $f\in\sA_{\mathbb C}$, with $\CJ\ne\varnothing$, be expanding
away from critical points and backward contracting. Then
$E(f,y)$ is absolute winning on $J(f)$ for every $y\in J(f)$.
\end{theoalph}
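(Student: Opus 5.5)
The proof has two stages. We first show that the critical exceptional set $E(f,\CJ)$, or a suitable subset of it, is absolute winning. We then transfer this to an arbitrary target $y\in J(f)$ using uniform expansion on the sets $K(V)$ of Definition~\ref{def:away}.

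\textbf{Stage 1.} We work with the potential game, which the results recalled in~\cite{BHNS} show is equivalent to the absolute game on a uniformly perfect (Ahlfors-type) space such as $J(f)$. In that game Alice may delete, at each stage, countably many neighbourhoods of radii $r_{i,k}$ with $\sum_k r_{i,k}^{\gamma}\le(\beta\rho_i)^{\gamma}$. The plan follows Wu~\cite{Wu}. Points whose orbits enter a small neighbourhood of $\CJ$ at time $n$ lie in pull-backs $W$ by $f^n$ of $\tB(c,\delta)$. Alice's job is to delete those pull-backs whose size is comparable to Bob's current radius. The point needing care is that, without uniform expansion, the sizes of such pull-backs are not geometric in $n$. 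Here we use backward contraction, which gives polynomial shrinking: $\diam W\lesssim n^{-\kappa}$ with $\kappa$ arbitrarily large. We also use the nice couples of~\cite{RLS}, a nested pair $\hV\supset V$ of nice sets around $\CJ$ whose first-return pull-backs have bounded distortion and definite modulus. With these, the pull-backs of $V$ of scale comparable to $\rho_i$ that meet Bob's ball $B(x_i,\rho_i)$ can be counted. Bounded distortion lets us compare diameters with $\rho_i$. Niceness means pull-backs of $V$ of different depths are nested or disjoint, so only the outermost ones need deleting. Polynomial shrinking, together with a counting argument on first entries, bounds the $\gamma$-potential of the deleted family by $(\beta\rho_i)^\gamma$ for some $\gamma<\HD$. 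Alice deletes a $\delta'$-enlarged copy of each such pull-back. Every point in the limit then has an orbit avoiding a fixed neighbourhood $V'\subset V$ of $\CJ$, and hence lies in $K(V')\subset E(f,\CJ)$.

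\textbf{Stage 2.} Fix $y\in J(f)$. If $y$ is not in $\overline{\bigcup_n f^n(\CJ)}$, or more generally once points are confined to $K(V')$, we proceed as follows. Since $K(V')$ is compact and forward invariant, $f$ is uniformly expanding on it with constants $C,\lambda$ (Definition~\ref{def:away}). On a neighbourhood of $K(V')$ that avoids $\CJ$, inverse branches of $f^n$ exist on balls of a fixed radius $r_0$ and contract exponentially with bounded Koebe distortion. This is Wu's setting. Alice interleaves a second potential strategy: at scale $\rho_i$ she deletes the pull-backs of $B(y,\epsilon)$ along the orbit, which are images under contracting inverse branches. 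Their number at each scale is bounded by $\deg(f)^{m}$, where $m$ is the number of iterates needed to pass through a scale window of ratio $\beta$. Exponential contraction makes the potential sum converge. The union of two potential-winning strategies is again potential winning, since the class is closed under countable intersection. So Alice ensures that $x\in K(V')$ and $\inf_n d(f^n x,y)\ge\epsilon$, hence $x\in E(f,y)$. When $y$ is itself close to the critical orbits, we enlarge the neighbourhood to $V\cup B(y,\epsilon)$. This neighbourhood contains $\CJ$, and applying Stage 1 with it directly gives orbits avoiding $y$, provided the nice-couple estimates tolerate this extra piece. I would handle that by treating $y$ as an extra "critical-like" point of local degree $1$.

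\textbf{Main obstacle.} The main difficulty is the potential estimate in Stage 1: we must show the deleted pull-backs at each scale satisfy $\sum r^\gamma\le(\beta\rho)^\gamma$ uniformly, even though arbitrarily deep returns near critical points can produce many small pull-backs. I would first try the nice-couple first-return decomposition to reduce to pull-backs by first-entry maps. For these, the polynomial shrinking $\diam\lesssim n^{-\kappa}$ combined with the exponential growth of the number of preimages must be balanced. The hope is that the degree counting is controlled by bounded criticality along each nice-couple chain, at most $\ell_{\max}$ per return.
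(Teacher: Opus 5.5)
There is a genuine gap, and it lies in Stage~1, in the potential estimate itself. Your Stage~2 is essentially the paper's Proposition~\ref{prop:expanding} (Wu's argument). No special treatment of $y$ near the critical orbits is needed. The paper proves that $(J(f)\setminus K_m)\cup E(f,y)$ is absolute winning for each $K_m=\{x:\dist(f^n(x),\CJ)\ge 1/m\ \text{for all } n\}$, and then intersects these sets with $\Ecrit=\bigcup_m K_m$.

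\textbf{The exponent and the two-scale deletion.} By Proposition~\ref{prop:conversion} (which needs $J(f)$ complete and doubling, not Ahlfors regular), absolute winning means $(s,\beta)$-potential winning for \emph{every} $s>0$. A bound for ``some $\gamma<\HD$'' only gives a weaker property. The budget $\sum_j\rho_{i,j}^s\le(\beta r_i)^s$ forces every deleted ball to have radius at most $\beta r_i$. If $N$ balls of equal radius are deleted in one turn, each has radius at most $N^{-1/s}\beta r_i$, a tiny fraction of $r_i$ when $s$ is small. So deleting a $\delta'$-enlarged copy of each pull-back of $V$ of size comparable to Bob's radius cannot fit the budget.

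The missing idea is a two-scale deletion. Take $d=\tau u$ with $\tau$ tiny. The pull-back $W=\phi(V^c)$, for a univalent branch $\phi$ on $\hV^c$, sets the scale $D_W$. The points that are in the deep neighbourhood $\tB(c,d)$ at that time lie in $\phi(\tB(c,d))\subset B(p_W,C(d/u)^{1/\ell}D_W)$, by Koebe and the local degree; see \eqref{eq:critical-hole}. So Alice deletes only $\overline B_J(p_W,\eta D_W)$, with $C\tau^{1/\ell}\le\eta$ and $\eta$ chosen from $s$ and $\beta$. For the same reason, ``only the outermost pull-backs need deleting'' is false. For $W'\subsetneq W$, the set $\phi'(\tB(c',d))$ is generally not inside the small ball deleted for $W$, so every scale window must be treated.

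\textbf{The counting.} Your proposed way through the main obstacle also fails. Polynomial shrinking $n^{-\kappa}$ cannot be balanced against the exponential number of preimages, since $\sum_n\deg(f)^n n^{-\kappa s}$ diverges for every $\kappa$. Bounded criticality per return is not the mechanism either. The paper's count is geometric and independent of time and degree:
\begin{enumerate}
\item By backward contraction $BC(R/\tau)$, the first entry into $\tB(\CJ,d)$ comes with a \emph{univalent} branch onto $\hV^c$ (Lemma~\ref{lem:first-entry}). So only univalent pull-backs with Koebe space occur.
\item These pull-backs of $V$ are nested or disjoint. $\Lambda$-niceness gives $\Mod(W;W')\ge\Lambda$ for $W'\subsetneq W$, hence $\diam W'\le\frac12\diam W$ by Lemma~\ref{lem:modulus-diameter}.
\item Thus those with $D_W\in[\rho,2\rho)$ are pairwise disjoint, and each contains $B(p_W,\xi D_W)$. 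An area argument bounds the number meeting Bob's ball by $N_b=\lceil16/\xi^2\rceil$.
\end{enumerate}
With $N_\beta$ dyadic bands per turn, the cost is at most $N_bN_\beta(\eta/\beta)^s r_i^s$, which a small $\eta$ controls. Polynomial shrinking is used only to make all pull-backs of $\hV$ uniformly small, so that every relevant $D_W$ lies in some window $[r_i,r_{i-1})$.

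Stage~2 needs the same care: the criterion is the per-turn budget, not convergence of a sum. The paper gets a count $L_\beta$ per window that does not depend on the deleted size. It measures scale with pull-backs of $B(y,t)$, while deleting balls that contain the pull-backs of $B(y,\epsilon)$, where $H\epsilon/t\le\eta$.
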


Theorem~\ref{thm:rls} and
\eqref{eq:exceptional-set} imply that $E(f,Y)$ is absolute
winning on $J(f)$ for every countable set $Y\subset J(f)$.

\begin{coro}\label{cor:strong-winning}
Under the hypotheses of Theorem~\ref{thm:rls}, there exists
$\alpha\in(0,1)$, depending only on $J(f)$, such that
$E(f,Y)$ is $\alpha$-strong winning on $J(f)$ for every
countable set $Y\subset J(f)$.
\end{coro}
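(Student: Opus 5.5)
The corollary will follow from Theorem~\ref{thm:rls} together with two general facts about Schmidt-type games on a metric space, here $J(f)$ with the spherical metric. The first is that absolute winning sets are closed under countable intersections. The second is that absolute winning implies $\alpha$-strong winning for every $\alpha$ below a threshold $\alpha_0$ that depends only on the geometry of the game space. I will therefore argue in three steps.

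First, since $Y$ is countable, \eqref{eq:exceptional-set} gives $E(f,Y)=\bigcap_{y\in Y}E(f,y)$. By Theorem~\ref{thm:rls}, each $E(f,y)$ is absolute winning on $J(f)$, and the countable intersection property (see~\cite{BHNS}) shows that $E(f,Y)$ is absolute winning on $J(f)$.

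Second, I will invoke the implication ``absolute winning $\Rightarrow$ $\alpha$-strong winning for all $\alpha\in(0,\alpha_0)$'' on a complete metric space. This holds in particular when the space is uniformly perfect, or more generally when it satisfies a diffuseness condition of the following form: there is a constant $\beta_0>0$ such that for every closed ball $\overline B_J(x,r)$ with $r$ small, and every point $z$, some sub-ball $\overline B_J(x',\beta_0 r)\subset \overline B_J(x,r)$ avoids $B(z,\beta_0 r)$. The threshold $\alpha_0$ is determined by $\beta_0$. The mechanism is as follows. Alice has a $\beta$-absolute winning strategy for small $\beta$. She simulates each absolute-game move, namely deleting a ball of radius at most $\beta$ times Bob's current radius, by choosing an $\alpha$-strong-game ball inside Bob's ball that avoids the deleted ball. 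Diffuseness guarantees such a ball exists whenever $\alpha\le\beta_0$ (after adjusting constants). The resulting nested sequence of balls is then a legal play of the absolute game, so its limit point lies in the target set.

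Third, I will check that $J(f)$ has this diffuseness property. The Julia set of a rational map of degree at least two is uniformly perfect (Mañé--da Rocha, Hinkkanen), and uniform perfectness is a property of $J(f)$ alone. Uniform perfectness gives the diffuseness above with $\beta_0$ depending only on the uniform perfectness constant: inside $\overline B_J(x,r)$ one finds points of $J(f)$ at mutually comparable distances, so at least one of them is far from any given $z$. Hence $\alpha=\alpha_0/2$ (say) depends only on $J(f)$. It does not depend on $Y$, because the reduction in the second step works for any absolute winning target set.

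The main obstacle is the second step, making the absolute-to-strong implication precise on a fractal game space. The delicate point is matching radii so that Alice's chosen $\alpha$-ball is contained in Bob's ball and also avoids the deleted ball, within the closed-ball formalism $\overline B_J(x,r)$. I would first try to cite the general statement from~\cite{BHNS} or McMullen's original argument for uniformly perfect sets, and verify only the diffuseness constant for $J(f)$ by hand.
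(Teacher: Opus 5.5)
Your proposal is correct and follows the paper's proof: the countable intersection of the sets $E(f,y)$ is absolute winning, and the simulation argument of Proposition~\ref{prop:absolute-schmidt} on the uniformly perfect space $J(f)$ then gives $\alpha$-strong winning with $\alpha=\kappa/8$. The bookkeeping to make explicit is that Alice uses a $\gamma$-absolute strategy with $\gamma=\alpha\beta$, so that Bob's strong-game bound $r_{i+1}\ge\beta s_i$ matches the absolute rule, and always plays radius exactly $\alpha r_i$, so that $r_i\to0$; nesting should be read via $\preceq$, which uniform perfectness supplies directly.
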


\begin{proof}
Rational Julia sets are uniformly perfect~\cite{Eremenko,ManRocha92}.
By Theorem~\ref{thm:rls} and the countable intersection property,
$E(f,Y)$ is absolute winning on $J(f)$. The corollary therefore
follows from Proposition~\ref{prop:absolute-schmidt} with
$\alpha=\kappa/8$, where $\kappa$ is as in~\eqref{eq:uniform-perfect}
for $X=J(f)$.
\end{proof}

\begin{coro}\label{cor:nondense}
Under the hypotheses of Theorem~\ref{thm:rls}, the set
\[
 \{x\in J(f):\{f^n(x):n\ge0\}\text{ is not dense in }J(f)\}
\]
is absolute winning on $J(f)$.
\end{coro}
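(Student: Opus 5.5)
The plan is to show that the set of points with non-dense forward orbit contains a countable intersection $E(f,Y)$ for a suitable countable $Y$. Then absolute winning follows from Theorem~\ref{thm:rls}, the countable intersection property, and the fact that a superset of an absolute winning set is absolute winning. The difficulty is that $E(f,Y)$ itself consists of points avoiding \emph{all} of $Y$, which is far smaller than the set we want. So we cannot take $Y$ dense. Instead we use a countable \emph{union} of absolute winning sets indexed by single points.

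Concretely, fix any single point $y\in J(f)$. Every $x\in E(f,y)$ satisfies $\inf_{n\ge0} d(f^n(x),y)>0$. Hence the closure of $\mathcal O_f^+(x)$ misses $y$, and in particular $\{f^n(x):n\ge0\}$ is not dense in $J(f)$, since $y\in J(f)$. Thus
\[
 E(f,y)\subset\{x\in J(f):\{f^n(x):n\ge0\}\text{ is not dense in }J(f)\}.
\]
By Theorem~\ref{thm:rls}, $E(f,y)$ is absolute winning on $J(f)$. It remains to note that absolute winning is monotone: if $S\subset S'$ and Alice has a winning strategy ensuring the limit point lies in $S$, the same strategy ensures it lies in $S'$. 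So the set in the corollary is absolute winning.

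There is essentially no obstacle here. The only point to check is monotonicity of the absolute winning property under enlarging the target set, and this is immediate from the definition of the game, since the target set enters only in the winning condition for Alice. One needs $J(f)\neq\varnothing$ to pick $y$, which always holds for rational maps of degree at least two.
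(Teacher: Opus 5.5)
Your proof is correct and is exactly the paper's argument: the non-dense-orbit set contains $E(f,y)$ for any fixed $y\in J(f)$, which is absolute winning by Theorem~\ref{thm:rls}, and absolute winning passes to supersets. The opening discussion of countable intersections and unions can be dropped, since a single point $y$ suffices.
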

\begin{proof}
It contains $E(f,y)$ for any fixed $y\in J(f)$, which is
absolute winning on $J(f)$ by Theorem~\ref{thm:rls}.
\end{proof}

For $\CJ\ne\varnothing$, put
\begin{equation}\label{eq:Ecrit}
 \Ecrit=\left\{x\in J(f):
 \inf_{n\ge0}\dist(f^n(x),\CJ)>0\right\}
 =E(f,\CJ).
\end{equation}
To prove Theorem~\ref{thm:rls}, we first establish the absolute winning property 
of $\Ecrit$ and then use expansion away from critical points to pass to arbitrary target points.

\begin{theoalph}\label{thm:reduction}
Let $f\in\sA_{\mathbb C}$ with $\CJ\ne\varnothing$.
Assume that $f$ is expanding away from critical points
and that $\Ecrit$ is absolute winning on $J(f)$.
Then $E(f,y)$ is absolute winning on $J(f)$ for every $y\in J(f)$.
\end{theoalph}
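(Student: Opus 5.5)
The plan is to reduce the statement to a family of auxiliary sets, each of which only has to be dealt with on a uniformly expanding piece of $J(f)$. For $\eta>0$ put
\[
 F_\eta=\{x\in J(f):\dist(f^n(x),\CJ)\ge\eta\text{ for all }n\ge0\},
\]
so that $\Ecrit=\bigcup_{k\ge1}F_{1/k}$, and set $G_\eta=(J(f)\setminus F_\eta)\cup E(f,y)$. If $x\in\Ecrit\cap\bigcap_k G_{1/k}$, then $x\in F_{1/k}$ for some $k$, and $x\in G_{1/k}$ forces $x\in E(f,y)$. Hence
\[
 \Ecrit\cap\bigcap_{k\ge1}G_{1/k}\subset E(f,y).
\]
Since $\Ecrit$ is absolute winning by hypothesis, and absolute winning sets are stable under countable intersections and supersets, it suffices to show that each $G_\eta$ is absolute winning on $J(f)$. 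Concretely, Alice must play so that the limit point either leaves $F_\eta$ or has its orbit stay away from $y$.

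Fix $\eta$, and let $V$ be the $\eta/2$-neighbourhood of $\CJ$. Then $F_\eta\subset K(V)$, and by Definition~\ref{def:away} we have $|Df^n|\ge C\lambda^n$ on $K(V)$. Since orbits of points of $F_\eta$ stay $\eta$-away from $\CJ$, there is a scale $r_0>0$ such that, for $z\in F_\eta$ and all $n$, the branch of $f^{-n}$ sending $f^n(z)$ to $z$ is univalent on $B(f^n(z),r_0)$. By Koebe distortion this branch has distortion bounded independently of $n$ on $B(f^n(z),r_0/2)$.

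I would then run the potential game in the style of Wu~\cite{Wu}, which is equivalent to the absolute game by Badziahin et al.~\cite{BHNS}. When Bob plays a ball $B=\overline B_J(x,\rho)$, Alice considers the times $n$ with $|Df^n(z)|\rho\asymp r_0$ for some $z\in B\cap F_\eta$. For each such time and each such branch landing near $y$, she deletes the pull-back of $B(y,\epsilon r_0)$. By bounded distortion this pull-back lies in a ball of radius comparable to $\epsilon\rho$. A point of $F_\eta$ whose orbit approaches $y$ within $\epsilon r_0$ at a time $n$ must have passed through such a stage with $\rho\asymp r_0|Df^n|^{-1}$, so it gets deleted. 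The limit point therefore either avoids $F_\eta$ or satisfies $\inf_n d(f^n(x),y)\ge\epsilon r_0$.

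The main obstacle is the counting needed for the potential game condition: Alice's deletions at each stage must have $\sum(\text{radius})^{\tau}\le(\beta\rho)^\tau$. Two things have to be bounded.

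\begin{itemize}
\item \emph{Relevant times.} Uniform expansion gives $|Df^{n+m}|\ge C\lambda^m|Df^n|$ along $K(V)$, so only boundedly many times $n$ have $|Df^n|\rho\asymp r_0$.
\item \emph{Distinct branches.} For a fixed $n$, distinct branches through $B$ hitting $B(y,\epsilon r_0)$ have disjoint images of comparable size, so by area packing there are boundedly many of them.
\end{itemize}

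With both counts bounded, taking $\epsilon$ small relative to $\beta$ gives the potential game inequality. If this bookkeeping turns out too lossy, the fallback is to let Alice handle one time $n$ per stage, spreading the deletions over several stages, using that $\rho$ shrinks geometrically.
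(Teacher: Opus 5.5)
Your proposal is correct and follows the paper's route: your $F_{1/k}$ and $G_{1/k}$ are exactly the paper's $K_m$ and $S(K_m,y)$, your intersection argument is the paper's, and your sketch for $G_\eta$ is the Wu-style potential game of Proposition~\ref{prop:expanding}. The paper bounds the branches per time by injectivity on a large univalent image rather than by area packing, but either works. When you write it out, fix two details. First, choose the target radius $\epsilon r_0$ only after seeing Bob's first ball, as the paper does when it picks $t$ after $B_0$. Otherwise, if Bob's initial radius is much smaller than $r_0$, early visits such as $x_\infty=y$ fall into no stage and are never deleted. Second, the univalent branches on $B(f^n(z),r_0)$ come from uniform expansion on a neighbourhood of $F_\eta$ (Lemma~\ref{lem:extension}), not merely from the orbit staying away from $\CJ$.
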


This article is organized as follows. In 
Section~\ref{sec:games} we recall background on Schmidt's game. In Section~\ref{sec:expanding}, we prove
Theorem~\ref{thm:reduction}. In Section~\ref{sec:critical},
we show that $\Ecrit$ is absolute winning on $J(f)$
and deduce Theorem~\ref{thm:rls}. In Section~\ref{sec:interval} we prove a similar result for smooth interval maps.

\section{Preliminaries}\label{sec:games}
Throughout this section, $(X,d)$ is a nonempty complete metric
space. For $x\in X$ and $r>0$, let
\[
 \overline B_X(x,r)=\{y\in X:d(x,y)\le r\}
\]
denote the closed ball centred at $x$ with radius $r$. 
Following the convention discussed in~\cite[Remark~2.1]{BHNS},
we regard each ball as specified by its centre and radius. We write
\[
 \overline B_X(x',r')\preceq\overline B_X(x,r)
 \ \text{if} \ d(x',x)+r'\le r.
\]
This condition implies
$\overline B_X(x',r')\subseteq\overline B_X(x,r)$.
The nesting conditions in the games below are imposed
using $\preceq$. For rational maps, the game space is $X=J(f)$
with the spherical metric.

\subsection{Schmidt's game}
Let $\alpha,\beta\in(0,1)$ be two constants and $S\subset X$ be a target set.  In Schmidt's $(\alpha,\beta)$-game,
Bob first chooses $B_0=\overline B_X(b_0,r_0)$. After $B_i$ has been chosen,
Alice chooses $A_i=\overline B_X(a_i,\alpha r_i)\preceq B_i$, and Bob then
chooses
\[
 B_{i+1}=\overline B_X(b_{i+1},\alpha\beta r_i)\preceq A_i.
\]
Thus $r_i=(\alpha\beta)^ir_0 \to 0$, and completeness gives a single point $x_\infty\in\bigcap_{i\ge0}B_i$. Alice wins precisely when $x_\infty\in S$.

\begin{defi}\label{def:schmidt}
A set $S\subset X$ is {\it $(\alpha,\beta)$-winning} if Alice has a strategy for placing her balls that ensures winning no matter how Bob chooses his balls. It is called {\it $\alpha$-winning} if it is $(\alpha,\beta)$-winning for
every $\beta\in(0,1)$. It is called {\it winning} if it is $\alpha$-winning for
some $\alpha \in(0,1)$
\end{defi}

\begin{prop}[\cite{Schmidt,McMullen,BFKRW}]\label{prop:schmidt-properties}
The $\alpha$-winning sets have the following properties.
\begin{enumerate}
\item Every $\alpha$-winning set is dense in $X$.
\item If $S_i\subset X$ is $\alpha$-winning for every $i\ge1$,
then $\bigcap_{i\ge1}S_i$ is $\alpha$-winning.
\item If $S\subset X$ is $\alpha$-winning, then it is
$\alpha^*$-winning for every $0<\alpha^*<\alpha$.
\item If $S\subset X$ is $\alpha$-winning and $S\ne X$,
then $\alpha\le1/2$.
\item If $X=\mathbb R^d$ and $S\subset X$ is $\alpha$-winning,
then $\HD(S)=d$. More precisely,
\begin{equation}\label{eq:schmidt-dimension}
 \HD(S\cap U)=d
 \ \text{for every nonempty open }U\subset\mathbb R^d.
\end{equation}
\end{enumerate}
\end{prop}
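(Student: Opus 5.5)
Proposition~\ref{prop:schmidt-properties} collects standard facts, so the plan is to recall the classical arguments of Schmidt~\cite{Schmidt} (and their metric-space versions in~\cite{McMullen,BFKRW}) one item at a time, using only the game rules with the $\preceq$ convention.

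For (1), given a nonempty open set $U\subset X$, Bob opens with a ball $B_0\subset U$. The outcome $x_\infty$ of any play lies in $B_0$, and if Alice follows a winning strategy it also lies in $S$. Hence $S\cap U\ne\varnothing$.

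For (3), fix $\beta\in(0,1)$ and put $\beta'=\alpha^*\beta/\alpha\in(0,1)$, so that $\alpha\beta'=\alpha^*\beta$. We simulate an $(\alpha,\beta')$-game inside the $(\alpha^*,\beta)$-game. Each time Bob plays $B_i$, Alice consults her $(\alpha,\beta')$-strategy, which produces some $A_i'=\overline B_X(a,\alpha r_i)\preceq B_i$. She then answers with $\overline B_X(a,\alpha^* r_i)$, which satisfies $\preceq A_i'$ since the centre is the same and $\alpha^* r_i\le\alpha r_i$. Bob's reply has radius $\alpha^*\beta r_i=\alpha\beta' r_i$ and is $\preceq A_i'$ by transitivity of $\preceq$. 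It is therefore a legal move in the simulated game, and the two games have the same outcome point.

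For (2), fix $\beta$ and interleave the strategies, as in Schmidt's original argument. Enumerate $\mathbb N=\bigsqcup_k N_k$ with each $N_k$ infinite; for concreteness, use the $k$-adic valuation style partition $N_k=\{2^{k-1}(2m+1)-1\}$. Along the stages in $N_k$, Alice plays the $(\alpha,\beta_k)$-game for $S_k$. Between two consecutive stages of $N_k$, several rounds of Alice--Bob moves occur. We package them so that they appear as a single Bob move of ratio $\alpha\beta_k$, with $\beta_k=\beta(\alpha\beta)^{g_k}$, where $g_k$ is the gap length. The inner moves stay nested by transitivity of $\preceq$.

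The main obstacle is that the gaps in $N_k$ are not constant unless the partition is chosen well. The chosen partition gives constant gap $2^k$, so $\beta_k$ is well defined. Since $S_k$ is $\alpha$-winning, it is in particular $(\alpha,\beta_k)$-winning. Therefore the common limit point lies in every $S_k$.

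For (4), suppose $\alpha>1/2$ and pick $x_0\notin S$. We claim Bob can force $x_\infty=x_0$. Choose $\beta$ so close to $1$ that $\alpha\beta>1/2$, hence $2\alpha\beta-1>0$. Bob starts with $B_0$ centred at $x_0$. Given Alice's ball $A_i=\overline B_X(a_i,\alpha r_i)\preceq B_i$, where $B_i$ is centred at $x_0$, we have $d(a_i,x_0)\le(1-\alpha)r_i$. Bob may then keep his centre at $x_0$ with radius $\alpha\beta r_i$, provided
\[
(1-\alpha)r_i+\alpha\beta r_i\le\alpha r_i.
\]
This inequality holds when $\beta\le 2-1/\alpha$, and $2-1/\alpha>0$ precisely because $\alpha>1/2$. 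With such a $\beta$, $x_\infty=x_0\notin S$, so $S$ is not $(\alpha,\beta)$-winning. Note that $\preceq$ is formal, so this needs no geometry of $X$.

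For (5), we cite Schmidt's dimension estimate: in $\mathbb R^d$, for small $\beta$ Bob has $\gg(\alpha\beta)^{-d}$ disjoint choices at each stage, and these build a Cantor set inside $S\cap U$ with dimension at least $d-C/|\log\beta|$. Letting $\beta\to0$ gives \eqref{eq:schmidt-dimension}.
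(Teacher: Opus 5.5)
Your proposal is correct and reconstructs the classical arguments from \cite{Schmidt,McMullen,BFKRW}. The paper does not prove this proposition itself; it only cites these sources. Two small slips need fixing.

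In (2), a packaged Bob move in the $k$-th simulated game runs from stage $n$ to stage $n+2^k$, so its radius is $(\alpha\beta)^{2^k}r_n$. Schmidt's game prescribes radii exactly, so you need $\alpha\beta_k=(\alpha\beta)^{2^k}$, that is, $\beta_k=\beta(\alpha\beta)^{2^k-1}$. The exponent is the number $2^k-1$ of intermediate rounds, not the gap $2^k$; with exponent $2^k$ the simulated move would be illegal.

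In (4), delete the sentence choosing $\beta$ close to $1$ with $\alpha\beta>1/2$. The argument needs $\beta\le 2-1/\alpha<1$, and for $1/2<\alpha\le 3/4$ that requirement is incompatible with $\alpha\beta>1/2$. Taking $\beta=2-1/\alpha$ suffices.

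In (5), the number of well-separated choices Bob has inside Alice's ball is of order $\beta^{-d}$, the ratio of her radius to his. Combined with contraction $\alpha\beta$ per round, this count is what gives the bound $d-C/|\log\beta|$.
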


\subsection{The strong game}
McMullen's strong game~\cite{McMullen}, see also \cite{FPS10}, has the same nesting
rules as Schmidt's game, but the radius equalities are replaced
by inequalities. Fix $\alpha,\beta\in(0,1)$. After Bob chooses
$B_i=\overline B_X(b_i,r_i)$, Alice chooses
$A_i=\overline B_X(a_i,s_i)\preceq B_i$ with $s_i\ge\alpha r_i$,
and Bob chooses $B_{i+1}\preceq A_i$ with
$r_{i+1}\ge\beta s_i$. Alice wins for $S\subset X$ if
$S\cap\bigcap_{i\ge0}B_i\ne\varnothing$.

\begin{defi}\label{def:strong}
A set $S\subset X$ is {\it $(\alpha,\beta)$-strong winning}
if Alice has a winning strategy for placing her balls that ensures
winning no matter how Bob chooses his balls. It is called
{\it $\alpha$-strong winning} if it is $(\alpha,\beta)$-strong
winning for every $\beta\in(0,1)$. It is called {\it strong winning}
if it is $\alpha$-strong winning for some $\alpha\in(0,1)$.
\end{defi}

It is easy to see that strong winning sets are winning. Proposition~\ref{prop:schmidt-properties} remains valid with
`winning' replaced by `strong winning'. Note that the strong winning property is preserved under quasisymmetric homeomorphisms, which is not generally true of the winning property, see \cite{McMullen}.

\subsection{The absolute game}

The absolute game was introduced by McMullen in~\cite{McMullen}. It is played similarly to Schmidt's game but instead of choosing a region where Bob must play, Alice now chooses a region where Bob must not play. 
Fix $\beta\in(0,1)$. After Bob chooses $B_i=\overline B_X(b_i,r_i)$, Alice
chooses $A_i=\overline B_X(a_i,\rho_i)$ with $0<\rho_i\le\beta r_i$.
Bob must respond with a ball satisfying
\begin{equation}\label{eq:absolute-rules}
 B_{i+1}\preceq B_i, \  B_{i+1}\cap A_i=\varnothing,
  \  r_{i+1}\ge\beta r_i.
\end{equation}
There is no requirement that $A_i\subset B_i$. Alice wins by default if Bob cannot choose a next ball
satisfying~\eqref{eq:absolute-rules}, or if $r_i$ does not tend
to zero. Otherwise she wins for $S$ if
$\bigcap_{i\ge0}B_i=\{x_\infty\}\subset S$.

\begin{defi}\label{def:absolute}
A set $S\subset X$ is {\it $\beta$-absolute winning} if Alice
has a winning strategy with parameter $\beta$. It is
{\it absolute winning} if it is $\beta$-absolute winning for
every $\beta\in(0,1)$.
\end{defi}

We follow the definition of the absolute game on metric spaces
in~\cite[Section~2.2]{BHNS}. We do not require $0<\beta<1/3$ as in the original definitions in~\cite{McMullen}. It is straightforward to see that for $0<\beta<\beta'<1$, every $\beta$-absolute winning set
is also $\beta'$-absolute winning. Indeed, increasing $\beta$ enlarges Alice's
choices and restricts Bob's choices. On Euclidean space one
usually restricts the definition to $0<\beta<1/3$ to ensure that
Bob can respond.

\begin{defi}
For $\kappa\in(0,1)$, we say that $X$ is
{\it $\kappa$-uniformly perfect} if there exists $r_*>0$ such that, for each
$x\in X$ and $0<r\le r_*$, there is $w\in X$ with
\begin{equation}\label{eq:uniform-perfect}
 \kappa r\le d(x,w)\le r.
\end{equation}
If $X$ is $\kappa$-uniformly perfect for some $\kappa\in(0,1)$,
then we say that it is {\it uniformly perfect}.
\end{defi}

Condition~\eqref{eq:uniform-perfect} ensures that Bob has a legal
choice whenever $r_i\le r_*$ and $0<\beta\le\kappa/8$.

\begin{prop}\label{prop:absolute-schmidt}
If $X$ satisfies \eqref{eq:uniform-perfect}, then every absolute
winning subset of $X$ is both $\alpha$-strong winning and
$\alpha$-winning, where $\alpha=\kappa/8$.
\end{prop}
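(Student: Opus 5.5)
The strategy is to convert a winning strategy for Alice in the $\beta'$-absolute game, with $\beta'=\kappa/8$, into a winning strategy in the strong game and in Schmidt's game with parameter $\alpha=\kappa/8$. This is the usual simulation argument of McMullen and of \cite{BHNS}. Since $\beta$-absolute winning implies $\beta'$-absolute winning for every $\beta'>\beta$, and $S$ is absolute winning, I may use a strategy for any small fixed $\beta'$. The strong game is the harder case. Indeed, Schmidt's $(\alpha,\beta)$-game is a special case of the $(\alpha,\beta)$-strong game, since Bob's equal radii satisfy the inequalities. A strong winning strategy therefore gives a winning strategy in Schmidt's game, and I only need to prove $\alpha$-strong winning.

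\textbf{Key geometric lemma.} Let $B=\overline B_X(b,r)$ with $r\le r_*$, and let $A'=\overline B_X(a',\rho)$ with $\rho\le\beta' r$. I claim there is a ball $A=\overline B_X(a,\alpha r)\preceq B$ with $A\cap A'=\varnothing$, and even $\dist(A,A')$ comparable to $r$. I would use uniform perfectness \eqref{eq:uniform-perfect} at scales $r/2$ and $r/4$ around $b$. This yields two points $w_1,w_2\in X$ with $\kappa r/4\le d(b,w_j)\le r/2$, or similar configurations, that are separated from each other and from $b$ at scale $\kappa r/4$. At most one of $b,w_1,w_2$ can lie within distance $\kappa r/8+\rho$ of $a'$, because $\rho\le\beta' r$ is tiny. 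Take $a$ to be one of the remaining points and radius $\alpha r=\kappa r/8$. Then nesting $d(a,b)+\alpha r\le r$ holds, and disjointness from $A'$ holds once $\beta'$ is small relative to $\kappa$. This is also where one sees that Bob always has legal moves when $r\le r_*$.

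\textbf{Simulation.} Alice runs a virtual absolute game. Bob's strong-game balls $B_i$ are shrunk to radii at most $r_*$ by waiting first. Alice then treats selected balls $B_{i_k}$ as Bob's moves $B'_k$ in the absolute game, choosing the indices so that the radii drop by a factor between $\beta'$ and roughly $\alpha\beta$ at each step. Whenever the virtual strategy answers $B'_k$ with a forbidden ball $A'_k$, Alice plays in the real game the ball $A$ from the lemma, inside the current real ball and avoiding $A'_k$. Bob's next real ball lies inside $A$, so it avoids $A'_k$. While Alice merely waits for the radius to drop enough, she plays concentric balls $\overline B_X(b_i,\alpha r_i)$. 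The next virtual move $B'_{k+1}$ is the first real ball $B_i$ with $r_i\le\beta' r'_k$ or so. One must check that it is legal: $B'_{k+1}\preceq B'_k$, it is disjoint from $A'_k$, and $r'_{k+1}\ge\beta' r'_k$. The lower bound holds because each real step shrinks radii by at least the factor $\alpha\beta$. I would therefore actually run the virtual game with parameter $\beta''=\min(\beta',\alpha\beta)$, which is allowed because absolute winning holds for every parameter. The virtual radii tend to zero, since Bob's radii in the strong game must tend to zero or Alice wins trivially by the standard convention. The intersection point is then the virtual outcome, which lies in $S$.

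\textbf{Main obstacle.} I expect the main obstacle to be the bookkeeping between the two parameters. The avoidance ball must be small compared with Alice's real radius $\alpha r$, which forces $\beta''\ll\kappa$. At the same time Bob's unknown shrink factor $\beta$ must be absorbed, which forces $\beta''\le\alpha\beta$. Choosing the virtual parameter as the minimum of these resolves the conflict, because absolute winning holds for every parameter. One must also check that the relation $\preceq$ propagates. This follows since it is transitive by the triangle inequality.
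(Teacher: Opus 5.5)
Your architecture matches the paper's: you feed Bob's strong-game balls to a small-parameter absolute strategy, and use \eqref{eq:uniform-perfect} at scale $r/2$ to place a radius-$\alpha r$ ball nested in the current ball and disjoint from the deleted ball. However, the parameter bookkeeping in your simulation, which you flag as the main obstacle, fails as stated.

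\textbf{The gap.} You take $\beta'<\kappa/8=\alpha$ and wait for the first real ball with $r_i\le\beta' r'_k$. So one virtual step may consist of several real steps, and the last one can cut a radius just above $\beta' r'_k$ by the factor $\alpha\beta$. Hence you only get $r'_{k+1}\ge\alpha\beta\beta' r'_k$, not $r'_{k+1}\ge\min(\beta',\alpha\beta)\,r'_k$.

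For instance, take $\alpha=0.1$, $\beta=0.5$, $\beta'=0.01$ and $r'_k=1$. Bob answers with radius $0.1>\beta'$, so you wait and play the concentric ball of radius $0.01$. Bob then answers with radius $0.005$, which becomes $B'_{k+1}$. But $0.005<\beta''=0.01$, so the virtual Bob has made an illegal move, and the absolute strategy guarantees nothing.

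\textbf{The repair.} You could take $\beta''\le\alpha\beta\beta'$. Better, drop the waiting altogether. The absolute game imposes no upper bound on $r_{i+1}$ beyond nesting, so every real ball $B_i$ with $r_i\le r_*$ can be fed to a $\gamma$-absolute strategy with $\gamma=\alpha\beta$. Then:
$r_{i+1}\ge\beta s_i=\gamma r_i$; $B_{i+1}\preceq A_i\preceq B_i$; and $\gamma<\alpha$ gives $(\alpha+\gamma)r<\kappa r/4$, so the choice between $b$ and $w_1$ works. This is exactly the paper's proof.

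\textbf{Smaller corrections.} The strong game here has no default rule when radii stay bounded below: Alice wins iff $S\cap\bigcap_i B_i\ne\varnothing$. So $r_i\to0$ must come from your strategy. It does: Alice always plays radius exactly $\alpha r_i$, and $B_{i+1}\preceq A_i$ forces $r_{i+1}\le\alpha r_i$.

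This exactness is also what makes your passage to Schmidt's game valid. A general strong strategy may use radii larger than $\alpha r_i$, which are illegal in Schmidt's game.

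Finally, the third point is unnecessary. Its separation from $w_1$ is not guaranteed by \eqref{eq:uniform-perfect} at scales $r/2$ and $r/4$. The points $b$ and $w_1$ with $d(b,w_1)\ge\kappa r/2$ suffice: one of them lies at distance at least $\kappa r/4>\alpha r+\rho$ from $a'$ whenever $\rho<\kappa r/8$.
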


\begin{proof}
Let $S\subset X$ be absolute winning. It suffices to show that $S$ is $\alpha$-strong winning.
Fix $\beta\in(0,1)$ for the strong game and put
$\gamma=\alpha\beta$, and fix a $\gamma$-absolute winning strategy $\sigma$ for $S$.
Alice plays the strong game as follows: she always chooses a ball of radius
exactly $\alpha r_i$. Since $B_{i+1}\preceq A_i$, we have $r_{i+1}\le\alpha r_i$, and hence $r_i \to 0$.
After finitely many initial plays and relabelling, we may
assume that $r_i\le r_*$ for all $i\ge0$.

Given Bob's ball $B_i$ in the strong game, Alice applies the strategy $\sigma$ to $B_i$
and obtains the ball $D_i$ that $\sigma$ deletes. She then chooses a ball $A_i$ of radius
$\alpha r_i$ such that $A_i\preceq B_i$ and
$A_i\cap D_i=\varnothing$.
Bob's next ball must then lie in $A_i$ and hence avoid $D_i$. Condition~\eqref{eq:uniform-perfect} guarantees that such a ball
$A_i$ exists; we construct it as follows.

Write $B_i=\overline B_X(x,r)$ and
$D_i=\overline B_X(a,\rho)$, where $\rho\le\gamma r$. Apply
\eqref{eq:uniform-perfect} at radius $r/2$ to obtain $w$ with
$\kappa r/2\le d(x,w)\le r/2$. One of $x,w$, denoted by $z$,
satisfies $d(z,a)\ge\kappa r/4$. Thus
$A_i:=\overline B_X(z,\alpha r)\preceq B_i$,
and $A_i$ is disjoint from $D_i$, since
\[
 (\alpha+\gamma)r=\kappa(1+\beta)r/8<\kappa r/4\le d(z,a).
\]
Bob's next ball avoids $D_i$, is formally nested in $B_i$,
and has radius at least $\alpha\beta r=\gamma r$. Thus Bob's next ball also satisfies the rules of the
$\gamma$-absolute game. Since $r_i \to 0$ by the choice of Alice,
$\bigcap_{i\ge0}B_i=\{x_\infty\}\subset S$ by the absolute strategy.
\end{proof}

\begin{prop}[\cite{McMullen,BFKRW,FSU}]\label{prop:absolute-properties}
Absolute winning sets have the following properties.
\begin{enumerate}
\item If $X$ is uniformly perfect, then every absolute winning
subset of $X$ is dense in $X$.
\item If $S_i\subset X$ is absolute winning for every $i\ge1$,
then $\bigcap_{i\ge1}S_i$ is absolute winning on $X$.
\end{enumerate}
\end{prop}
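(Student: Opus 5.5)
The statement just above is Proposition~\ref{prop:absolute-properties}, and I would prove its two parts separately, using only the rules of the absolute game.

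\textbf{Part (1).} Suppose $S$ is absolute winning and $U\subset X$ is nonempty and open. Choose $x\in U$ and $r\le r_*$ such that $\overline B_X(x,r)\subset U$. Let Bob open with $B_0=\overline B_X(x,r)$ and play with a parameter $\beta\le\kappa/8$. Fix a $\beta$-absolute winning strategy for Alice. Bob always has a legal reply, and I would build it exactly as in the proof of Proposition~\ref{prop:absolute-schmidt}. Write $B_i=\overline B_X(x_i,r_i)$ and let $A_i=\overline B_X(a,\rho)$ with $\rho\le\beta r_i$. Apply \eqref{eq:uniform-perfect} at radius $r_i/2$ to get a point $w$. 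One of $x_i$ and $w$, call it $z$, satisfies $d(z,a)\ge\kappa r_i/4$. Bob then plays $\overline B_X(z,\beta r_i)$. This ball is $\preceq B_i$, it is disjoint from $A_i$, and its radius is exactly $\beta r_i$. Hence $r_i\to0$, so Alice cannot win by default. Her strategy therefore forces the limit point $x_\infty$ into $S\cap B_0\subset U$. This shows $S$ is dense.

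\textbf{Part (2).} This is the main work. The plan is the standard interleaving argument of \cite{McMullen}: split time along a partition $\mathbb N=\bigsqcup_k N_k$ into infinite sets, and let Alice use the strategy for $S_k$ on the rounds in $N_k$. The obstacle is that the $S_k$ strategy expects Bob's radii to shrink by a factor of at least $\beta'$ at each of its own turns. Between two consecutive turns in $N_k$, Bob may shrink by a larger total factor.

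To handle this, fix $\beta$ and let Alice use $\beta$-strategies $\sigma_k$ for each $S_k$. She feeds to $\sigma_k$ the subsequence of Bob's balls $B_i$ with $i\in N_k$. When $i\in N_k$, she plays the ball that $\sigma_k$ prescribes against $B_i$. For this to be legal for $\sigma_k$, the next ball fed to it, $B_j$ with $j$ the next element of $N_k$, must satisfy three conditions. First, $B_j\preceq B_i$, which holds by transitivity of $\preceq$. Second, $B_j$ must be disjoint from the deleted ball, which holds because $B_{i+1}$ avoids it and $B_j\subset B_{i+1}$. Third, the radius condition $r_j\ge\beta r_i$, which may fail.

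The fix is the monotonicity remark in the text: a $\beta$-absolute winning set is also $\beta'$-absolute winning for $\beta'>\beta$. So I would use the freedom in the radius condition differently. The set $S_k$ is $\beta$-winning for every $\beta$, and in particular for arbitrarily small parameters. In the interleaved game with parameter $\beta$, if $j-i=m$ then $r_j\ge\beta^m r_i$. The difficulty is that $m$ is unbounded along $N_k$.

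I would resolve this by letting Alice use, at the $n$-th occurrence of $k$, a deletion radius tied to the actual current radius. Define the modified strategy $\sigma_k$ on a game whose parameter varies, $\beta_n=\beta^{m_n}$, where $m_n$ is the gap. Then I would show that absolute winning also holds for games with varying parameters $\beta_n\to0$. I expect this to follow by rescaling, since Alice deleting a ball of radius $\beta r_i$ is stronger than deleting one of radius $\beta_n r_i$ in the sense of restricting Bob. If that approach fails, the fallback is the standard trick from \cite{FSU}: choose the partition so that every gap in every $N_k$ is bounded, for instance $N_k=\{2^{k-1}(2t+1)\}$ has gaps $2^k$. Then $r_j\ge\beta^{2^k}r_i$, and it suffices to use a $\beta^{2^k}$-strategy for $S_k$, which is legal because each $S_k$ is absolute winning, hence winning at every parameter. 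Bob's legal moves are compatible with each $\sigma_k$, and $r_i\to0$ (otherwise Alice wins by default). Therefore $x_\infty\in S_k$ for all $k$, which gives $x_\infty\in\bigcap_k S_k$.
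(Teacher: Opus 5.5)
Your proposal is correct, provided you keep only the fallback in Part (2). The paper gives no proof of this proposition; it simply cites \cite{McMullen,BFKRW,FSU}. Your fallback is the standard Schmidt--McMullen interleaving argument from those sources. With $N_k=\{2^{k-1}(2t+1):t\ge0\}$, consecutive rounds of $N_k$ are $2^k$ apart, so the subsequence $(B_i)_{i\in N_k}$ is a legal play of the $\beta^{2^k}$-absolute game:
\begin{enumerate}
\item nesting holds by transitivity of $\preceq$;
\item avoidance holds because $B_j\subset B_{i+1}$ and $B_{i+1}\cap A_i=\varnothing$;
\item the radius condition $r_j\ge\beta^{2^k}r_i$ holds.
\end{enumerate}
Make the remaining check explicit: the balls prescribed by a $\beta^{2^k}$-strategy for $S_k$ have radius at most $\beta^{2^k}r_i\le\beta r_i$, so they are legal moves in the $\beta$-game. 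Round $0$, which lies in no $N_k$, can be played arbitrarily.

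Drop the varying-parameter idea entirely, because its justification runs the wrong way. Forcing Alice's deletion radius down to $\beta_n r_i$ with $\beta_n\to0$ weakens Alice and frees Bob at the same time. So nothing follows from $S_k$ being $\beta$-absolute winning for each fixed $\beta$.

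In Part (1), take $\beta<\kappa/8$ strictly. With $\beta=\kappa/8$ you only get $d(z,a)\ge2\beta r_i$, and when $\rho=\beta r_i$ the closed balls $\overline B_X(z,\beta r_i)$ and $A_i$ may share a point. With this change your argument is the construction from the proof of Proposition~\ref{prop:absolute-schmidt}, used directly. Alternatively, (1) follows at once from Proposition~\ref{prop:absolute-schmidt} together with Proposition~\ref{prop:schmidt-properties}(1).

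For (2), on doubling spaces such as $J(f)$, which is the only case the paper uses, the paper's own machinery gives another route: Proposition~\ref{prop:conversion} combined with Proposition~\ref{prop:potential-properties}(1). Your interleaving argument needs no doubling hypothesis, so it covers an arbitrary complete metric space $X$, as the statement requires.
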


The absolute winning property on $\mathbb R^d$ is also preserved
under quasisymmetric homeomorphisms; see~\cite[Theorem~2.2]{McMullen}. On $\mathbb R^d$, absolute winning implies $\alpha$-strong winning for
every $0<\alpha<1/2$~\cite{McMullen}.

\subsection{The potential game}

Fix $s>0$ and $\beta\in(0,1)$. In the $(s,\beta)$-potential game, Bob first chooses an initial ball
$B_0=\overline B_X(b_0,r_0)$. After $B_i$ is chosen, Alice specifies an
at most countable collection of balls
$A_{i,j}=\overline B_X(a_{i,j},\rho_{i,j})$, of positive radii, such that
\begin{equation}\label{eq:budget}
 \sum_j\rho_{i,j}^{s}\le(\beta r_i)^s.
\end{equation}
The collection may be empty. Bob then chooses
\begin{equation}\label{eq:potential-bob}
 B_{i+1}\preceq B_i, \  r_{i+1}\ge\beta r_i.
\end{equation}
He need not avoid any of Alice's balls. If $r_i\to0$, Alice wins
for $S$ precisely when
\begin{equation}\label{eq:potential-outcome}
 x_\infty\in S\cup\bigcup_{i,j}A_{i,j}.
\end{equation}
She wins by default if the radii do not tend to zero.
This is the point version of the potential game introduced 
in~\cite[Appendix~C]{FSU}.

\begin{defi}\label{def:potential}
The set $S$ is {\it $(s,\beta)$-potential winning} if Alice has
a winning strategy. For $c_0\ge0$, it is
{\it $c_0$-potential winning} if it is $(s,\beta)$-potential
winning for every $s>c_0$ and every $\beta\in(0,1)$.
\end{defi}

For $0<s<s'$ and $\beta\in(0,1)$, every $(s,\beta)$-potential
winning set is $(s',\beta)$-potential winning, since increasing
$s$ enlarges Alice's choices.

\begin{prop}\label{prop:potential-properties}
Let $X$ be complete and $c_0\ge0$.
\begin{enumerate}
\item If $S_j\subset X$ is $c_0$-potential winning for every
$j\ge1$, then $\bigcap_{j\ge1}S_j$ is $c_0$-potential winning
on $X$.
\item If $S$ is $c_0$-potential winning and $K\subset X$ is
closed, then $S\cap K$ is $c_0$-potential winning on $K$.
\item If $\HD(N)\le c_0$, then $X\setminus N$ is
$c_0$-potential winning. Consequently, if $S$ is $c_0$-potential winning and
$\HD(N)\le c_0$, then $S\setminus N$ is
$c_0$-potential winning.
\end{enumerate}
\end{prop}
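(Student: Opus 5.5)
We prove the three items separately; each is a strategy-combination or strategy-transfer argument in the point version of the potential game. Throughout, if Bob's radii do not tend to zero Alice wins by default, so only plays with $r_i\to0$ matter.

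\emph{Item (1).} Fix $s>c_0$ and $\beta\in(0,1)$. For each $j$, fix an $(s,\beta_j)$-winning strategy $\sigma_j$ for $S_j$, where $\beta_j=\beta\,2^{-j/s}$; this $\beta_j$ lies in $(0,1)$, so $\sigma_j$ exists. Alice runs all $\sigma_j$ simultaneously and, at stage $i$, plays the union over $j$ of the balls that $\sigma_j$ prescribes for $B_i$. This union is countable, and its budget satisfies $\sum_j(\beta_j r_i)^s=(\beta r_i)^s\sum_j2^{-j}=(\beta r_i)^s$, so \eqref{eq:budget} holds.

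The main obstacle is the bookkeeping that makes Bob's moves legal for each auxiliary game. The condition $r_{i+1}\ge\beta r_i\ge\beta_j r_i$ is fine directly. So we simply feed the same sequence $B_i$ into every $\sigma_j$; Bob's moves are legal in the $(s,\beta_j)$-game because $\beta_j\le\beta$. Hence for each $j$ we get $x_\infty\in S_j\cup\bigcup_{i,k}A^{(j)}_{i,k}$. If $x_\infty$ lies in none of the deleted balls, it lies in every $S_j$. Otherwise it lies in some ball of Alice's combined collection. Either way, \eqref{eq:potential-outcome} holds for $\bigcap_j S_j$.

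\emph{Item (2).} Balls in $K$ are $K\cap\overline B_X(b,r)$ with $b\in K$, and the relation $\preceq$ is the same in $K$ and in $X$, since it only involves centres and radii. Given a game on $K$, Alice regards Bob's balls as balls in $X$ and applies her $X$-strategy. That strategy may propose balls $\overline B_X(a,\rho)$ with $a\notin K$. If such a ball meets $K$, we replace it by $\overline B_K(a',2\rho)$ for some $a'$ in the intersection; otherwise we discard it. The replacement covers the original ball's trace on $K$.

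The factor $2^s$ this introduces in the budget is absorbed by running the $X$-strategy with parameter $\beta/2$. Bob's $K$-moves are legal there since $r_{i+1}\ge\beta r_i\ge(\beta/2)r_i$. Since $K$ is closed and hence complete, $x_\infty\in K$, and the outcome condition transfers.

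\emph{Item (3).} Fix $s>c_0\ge\HD(N)$ and $\beta\in(0,1)$. Since $\mathcal H^s(N)=0$, choose covers $\{U_{k,m}\}_m$ of $N$ by balls of radii $\rho_{k,m}$ with $\sum_m\rho_{k,m}^s\le\varepsilon_k$, where $\varepsilon_k$ is a rapidly decreasing sequence. Alice uses a delayed-release strategy. At stage $i$ she plays those balls of some cover whose radii satisfy $\rho\le\beta r_i$ and which are not yet played; she plays a ball only if it meets $B_i$, after enlarging it to a ball centred in $X$.

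To control the budget, use a single cover $\{U_m\}$ with $\sum\rho_m^s<\infty$, and assign each $U_m$ to the first stage $i$ with $\beta r_i<2\rho_m$, provided this happens while $U_m$ meets $B_i$. The ball is played only when $\rho_m\le\beta r_i$. Controlling the total $\sum\rho_m^s$ over such $m$ at each stage by $(\beta r_i)^s$ is the delicate step. A cleaner route is to cite the standard argument of \cite[Appendix~C]{FSU}: the point potential game with a set of $\mathcal H^s$-measure zero is won by releasing, at stage $i$, the cover pieces with radius comparable to $\beta r_i$. The fact that $r_i$ decreases at least geometrically once we pass to a subsequence ensures that each piece is released at most at one scale, and choosing $\varepsilon_k$ summable against $r_0$ gives the budget. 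If $x_\infty\in N$, it lies in some piece released at the stage where that piece's scale matches $\beta r_i$, so Alice wins.

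The consequence follows by combining item (3) with item (1): $S\setminus N=S\cap(X\setminus N)$. I expect the budget allocation in item (3) to be the main technical point.
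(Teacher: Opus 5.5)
Your items (1) and (2) are fine. Item (1) is exactly the paper's argument: the same $\beta_j=\beta2^{-j/s}$, the same union of strategies, and the same legality check. For (2) the paper only cites \cite[Lemma~4.4]{BHNS}. Your transfer argument is a correct self-contained proof: you run the $X$-strategy at parameter $\beta/2$ and recentre each proposed ball $\overline B_X(a,\rho)$ that meets $K$ as $\overline B_K(a',2\rho)$, which absorbs the factor $2^s$.

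The gap is in item (3). You never show that your delayed-release strategy respects the budget, and the justification you sketch does not hold.
\begin{itemize}
\item Bob's radii need not decrease geometrically. Only $r_{i+1}\ge\beta r_i$ and $B_{i+1}\preceq B_i$ are required, so he may repeat a ball.
\item Alice cannot ``pass to a subsequence'' of a game in which she must answer every turn.
\item A fixed cover with $\sum_m\rho_m^s$ small gives no bound of the form $(\beta r_i)^s$ on the pieces released at stage $i$ once $r_i\to0$. Several pieces of radius comparable to $\beta r_i$ clustered near $B_i$ already exceed that budget.
\end{itemize}
So the mechanism you attribute to the standard argument does not work as stated.

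The missing observation is that none of this is needed. The potential game imposes only the total constraint \eqref{eq:budget}; there is no condition on individual radii or on where Alice's balls lie. Since $s>c_0\ge\HD(N)$, we have $\mathcal H^s(N)=0$. So on her first turn Alice can cover $N$ by countably many balls centred in $X$ with $\sum_j\rho_{0,j}^s\le(\beta r_0)^s$. To build them, pick a point of each covering set that meets $N$ and use the diameter of that set as radius, or a tiny positive radius if the diameter is $0$. She then plays the empty collection at every later turn. Any outcome $x_\infty\in N$ lies in one of these first-turn balls, so Alice wins.

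This is the content of \cite[Proposition~4.6]{BHNS}, which the paper cites. It is also the same move the paper makes in the proof of Proposition~\ref{prop:critical-winning} to dispose of $J(f)\setminus\Jcon(f)$. With this replacement, your deduction of the consequence $S\setminus N=S\cap(X\setminus N)$ from (1) and (3) goes through.
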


\begin{proof}
For (1), we extend the argument of~\cite[Proposition~2.4]{BHNS}
to countable intersections. Fix $s>c_0$ and $\beta\in(0,1)$,
and let $\beta_j=\beta 2^{-j/s}$ for $j\ge1$.
Bob first chooses a ball $B_0$. Since each $S_j$ is
$c_0$-potential winning, Alice has an $(s,\beta_j)$-winning
strategy $\sigma_j$ for $S_j$; fix one such strategy for each $j$.
In response to $B_0$, the strategy $\sigma_j$ gives an
at most countable collection of balls $\mathcal A_{0,j}$.
Alice specifies all the balls in $\bigcup_{j\ge1}\mathcal A_{0,j}$
as her first move. This collection is still at most countable.

Bob then chooses a single ball $B_1\preceq B_0$ with
$r_1\ge\beta r_0$; he need not avoid any of Alice's balls.
This choice is valid for every $(s,\beta_j)$-game,
since $\beta_j\le\beta$. Each strategy $\sigma_j$ now gives
its next choice $\mathcal A_{1,j}$, and Alice again specifies
the union of these collections. She continues in this way at
every turn. For each $i$, we have
\[
 \sum_{j\ge1}\sum_{A\in\mathcal A_{i,j}}
 \operatorname{rad}(A)^s
 \le\sum_{j\ge1}(\beta_jr_i)^s=(\beta r_i)^s,
\]
where $\operatorname{rad}(A)$ denotes the radius of $A$.
Thus all of Alice's choices satisfy~\eqref{eq:budget}.
If $r_i\to0$ and the outcome lies outside all the prescribed
balls, each strategy $\sigma_j$ guarantees that it belongs to
$S_j$, so it belongs to $\bigcap_{j\ge1}S_j$.
In all other cases Alice wins by default.

Statement (2) follows from~\cite[Lemma~4.4]{BHNS}; its proof
also applies when $c_0=0$, since the argument only uses $s>c_0$.
Statement (3) is~\cite[Proposition~4.6]{BHNS}.
\end{proof}

A metric space $(X,d)$ is called {\it doubling} if there exists
an integer $M\ge1$ such that, for every $x\in X$ and $r>0$,
there are points $x_1,\ldots,x_M\in X$ satisfying
\[
 \overline B_X(x,r)\subseteq
 \bigcup_{j=1}^{M}\overline B_X(x_j,r/2).
\]
The following proposition is due to Fishman, Simmons, and
Urba\'nski~\cite[Appendix~C, Observation~C.7 and Theorem~C.8]{FSU};
see also~\cite[Proposition~1.3]{BHNS}.

\begin{prop}\label{prop:conversion}
On a complete doubling metric space,
\[
 S\text{ is absolute winning}
\ \Longleftrightarrow \
 S\text{ is }0\text{-potential winning}.
\]
\end{prop}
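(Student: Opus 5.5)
The plan is to prove the two implications separately, in each case by letting Alice translate a strategy from one game into the other. Doubling should only be needed for the harder direction, from potential to absolute.

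\emph{Absolute $\Rightarrow$ $0$-potential.} Fix $s>0$ and $\beta\in(0,1)$. Choose $\gamma\in(0,1)$ so small that $(3\gamma)^s\le\beta^s$, and fix a $\gamma'$-absolute winning strategy $\sigma$ for $S$, with $\gamma'=\beta\gamma$.
\begin{enumerate}
\item Alice runs a simulated absolute game alongside the potential game. Its positions are a subsequence $B_{i_0},B_{i_1},\dots$ of Bob's potential balls. Here $i_{k+1}$ is the first index with $r_{i_{k+1}}<\gamma r_{i_k}$, so that $r_{i_{k+1}}\ge\beta\gamma r_{i_k}$.
\item At a simulated position $B_{i_k}$, the strategy $\sigma$ deletes a ball $D_k=\overline B_X(a_k,\rho_k)$ with $\rho_k\le\gamma' r_{i_k}$. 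In the potential game Alice specifies the single enlarged ball $\overline B_X(a_k,3\rho_k)$. This is within budget, since $(3\rho_k)^s\le(3\gamma r_{i_k})^s\le(\beta r_{i_k})^s$. At all other turns she specifies nothing.
\item Suppose $B_{i_{k+1}}$ meets $D_k$. Its radius is less than $\gamma r_{i_k}$, and I would arrange the threshold so that this radius is at most $\rho_k$. Then $B_{i_{k+1}}$ lies in $\overline B_X(a_k,3\rho_k)$, so $x_\infty$ lies in a specified ball and Alice wins. I expect the factor $3$ and the threshold will need a small adjustment, for instance waiting until Bob's radius drops below $\rho_k$ itself.
\item Otherwise $B_{i_{k+1}}$ avoids $D_k$, satisfies $B_{i_{k+1}}\preceq B_{i_k}$ by transitivity of $\preceq$, and has the correct radius ratio. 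So it is a legal absolute move, and the simulation continues.
\item If the radii tend to zero and Alice never wins through a specified ball, then $\sigma$ forces $x_\infty\in S$. If the waiting in step 1 never ends, the radii do not tend to zero and Alice wins by default.
\end{enumerate}

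\emph{$0$-potential $\Rightarrow$ absolute.} Fix $\beta\in(0,1)$. Choose a small $s>0$ and small $\beta'$, both depending on $\beta$ and the doubling constant $M$, and fix an $(s,\beta')$-potential winning strategy $\tau$. The steps are as follows.
\begin{enumerate}
\item Alice simulates the potential game along a subsequence of Bob's absolute balls. She advances the simulation whenever the radius has dropped by a factor of about $\beta'$. Bob's absolute balls are nested and shrink by at least $\beta$ per turn, so these are legal potential moves.
\item Each ball $A$ produced by $\tau$ has some radius $\rho$. Alice regards $A$ as \emph{due} at the first of her turns when Bob's radius $r$ satisfies $r\le\rho/c$, for a constant $c$ depending on $\beta$. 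At that scale, a single absolute deletion of radius $\beta r$ cannot cover $A$. Instead, $A$ meets Bob's ball in a piece that can be covered by a bounded number $N(M,\beta)$ of balls of radius $\beta r$, using doubling. Alice must delete these pieces over the next $N$ turns, unless Bob has already moved away from $A$.
\item The budget \eqref{eq:budget} bounds the number of balls of radius at least $\beta'^k r_i$ specified at step $i$ by $\beta^{-ks}\cdot\beta'^{s}$ times a constant. This count is subexponential in $k$ when $s$ is small. Summing over all earlier steps $i$ and all scales, the number of balls that become due near a given ball of Bob's, within a window of turns, should stay below the number of turns in that window. Hence a queue discipline, handling the largest due ball first and deferring the rest, never falls behind.
\item If $x_\infty$ lies in no ball of $\tau$, the simulated potential play shows $x_\infty\in S$. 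If $x_\infty$ lies in such a ball, the deletions forbid this, apart from boundary effects, which I would handle by deleting slightly enlarged balls. This gives a contradiction. Doubling also guarantees that the relevant pieces are finite in number.
\end{enumerate}

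The main obstacle is the scheduling estimate in the second direction: showing that the pending deletions never accumulate faster than one per turn. I would first try to prove, by a pigeonhole argument over scales $\beta^k$, that at most $\beta^{-ks}$ balls are due per window of length $k$. Then I would take $s<\log 2/\log(1/\beta)$, or smaller, so that this count beats the window length after multiplication by $N(M,\beta)$.
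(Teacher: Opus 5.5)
The paper does not prove this proposition. It quotes Fishman--Simmons--Urba\'nski (Appendix~C, Observation~C.7 and Theorem~C.8), so your proposal has to stand on its own. Your first direction, absolute $\Rightarrow$ $0$-potential, is sound after one repair. Specify $\overline B_X(a_k,3\gamma r_{i_k})$ instead of $\overline B_X(a_k,3\rho_k)$. Every ball of radius $<\gamma r_{i_k}$ that meets $D_k$ lies in this ball, and your budget computation already allows this radius. The alternative you mention, waiting until Bob's radius drops below $\rho_k$, would break the absolute rule $r_{i_{k+1}}\ge\gamma' r_{i_k}$, since $\rho_k$ can be arbitrarily small.

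The converse has a genuine gap, in two places.
\begin{enumerate}
\item The timing is wrong. A potential ball $A=\overline B_X(a,\rho)$ must be swallowed by a single deletion while $\beta r\ge\rho$. Once $\rho$ exceeds a fixed multiple of $\beta r$, it is too late. For example, in $\mathbb R$, if $A\subseteq B$ and $\rho>3\beta r$, Bob's next ball of radius $\beta r$ fits inside $A$ and misses any single deleted ball. From then on $x_\infty\in A$ is forced, and the potential strategy tells you nothing. So deleting pieces of $A\cap B$ over $N$ later turns cannot work.
\item The scheduling count cannot close. The budget bounds the number of balls due in a window of $k$ turns only by a quantity exponential in $k$. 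Balls due at a given time can also come from every earlier step, which gives a bound of order $(r_0/r)^s$ and hence no uniform bound. The number of available turns grows only linearly. So $N(M,\beta)\beta^{-ks}\le k$ fails for large $k$ whatever $s>0$ is, and already at $k=1$ when $N(M,\beta)\ge2$.
\end{enumerate}

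The missing idea is to delete \emph{clusters} early and to control \emph{mass} rather than number.
\begin{enumerate}
\item Call a potential ball pending at time $t$ if it meets $B_t$ and is not yet inside a deletion. Put $\Phi_t=\sum_{A\text{ pending}}(\rho_A/r_t)^s$.
\item Suppose every pending ball has $\rho_A<\beta r_t/2$. By doubling, the centres of all pending balls lie in $N=N(M,\beta)$ balls of radius $\beta r_t/2$ centred in $X$. Delete the concentric $\beta r_t$-ball carrying the largest share. It contains every pending ball assigned to it, so it removes at least $\Phi_t/N$, and no enlargement is needed.
\item Advance the simulation when Bob's radius falls below $\beta^T$ times the last simulated radius. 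Since Bob shrinks by at most a factor $\beta$ per turn, the simulated ratio is only $\ge\beta^{T+1}$, so use an $(s,\beta^{T+1})$-potential strategy.
\item Then $\Phi_{t+1}\le\beta^{-s}(1-1/N)\Phi_t+(\text{new mass})$. The new mass is at most $\beta^{(T+1)s}$ and arrives at most once every $T$ turns.
\item Choose $s$ with $q=\beta^{-s}(1-1/N)<1$, then $T$ with $2^s\beta^{Ts}<1-q^T$. By induction, $\Phi_t\le\beta^{(T+1)s}/(1-q^T)<(\beta/2)^s$ for all $t$.
\item A single pending ball with $\rho_A\ge\beta r_t/2$ would already contribute $(\beta/2)^s$, so the radius hypothesis in step~2 persists.
\item Since $r_t\to0$, every potential ball is eventually either covered by a deletion or no longer meets Bob's ball. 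Hence $x_\infty\notin\bigcup A$, and so $x_\infty\in S$.
\end{enumerate}
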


Consequently, on a complete doubling uniformly perfect space,
\begin{equation}\label{eq:game-implications}
 0\text{-potential winning}
 \ \Longleftrightarrow\ \text{absolute winning}
 \ \Longrightarrow\ \text{strong winning}.
\end{equation}

For our applications $J(f)$ is compact, doubling, and uniformly
perfect~\cite{Eremenko,ManRocha92}. Thus
\eqref{eq:game-implications} applies directly.
In particular, to prove Theorem A, it suffices to prove that $E(f, y)$ is $(s,\beta)$-potential winning
for every $s>0$ and $\beta\in(0,1)$. We use this criterion in
Sections~\ref{sec:expanding} and~\ref{sec:critical}.

\section{Uniform expansion and absolute winning}
\label{sec:expanding}

Throughout this section, let $f\in\sA_{\mathbb C}$,
and equip $\CC$ with the spherical metric.
The proof below adapts the argument in~\cite[Section~4]{Wu} and is given in
Subsection~\ref{sec:expanding-proofs}.

\begin{prop}\label{prop:expanding}
Let $K\subset J(f)$ be compact and forward invariant. Suppose that
$f$ is uniformly expanding on $K$. Then, for each $y\in J(f)$,
\begin{equation}\label{eq:conditional-set}
 S(K,y)=(J(f)\setminus K)\cup E(f,y)
\end{equation}
is absolute winning on $J(f)$.
\end{prop}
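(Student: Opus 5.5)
By Proposition~\ref{prop:conversion} and \eqref{eq:game-implications}, it suffices to show that $S(K,y)$ is $(s,\beta)$-potential winning on $J(f)$ for every $s>0$ and $\beta\in(0,1)$. The idea, following Wu, is to delete small balls around those pieces of the current ball that are mapped by a suitable iterate close to $y$. By uniform expansion these pieces are pulled back with good distortion, so their sizes are controlled. Points of $J(f)\setminus K$ win automatically. So Alice only needs to handle outcomes $x_\infty\in K$ whose orbit accumulates at $y$, and every such outcome should end up inside one of her balls.

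\textbf{Step 1: uniform geometry near $K$.} From $|Df^n|\ge C\lambda^n$ on $K$, we get expansion on a neighbourhood: replacing $f$ by an iterate $f^N$ and using continuity of $Df$, there are $\eta>0$ and $\Lambda>1$ such that, for $x$ within $\eta$ of $K$, $f^N$ is injective on $B(x,\eta)$ and expands distances by at least $\Lambda$. Koebe distortion then applies to inverse branches along orbits staying $\eta$-close to $K$. Consequently, for $x\in K$ and $n\ge0$, the component $U_n(x)$ of $f^{-n}(B(f^n x,\eta))$ containing $x$ is nearly round, with radius comparable to $\eta|Df^n(x)|^{-1}$, and these radii decay exponentially in $n$. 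Passing from $f$ to $f^N$ is harmless because $E(f,y)$ is determined by the $f^N$-orbit closures of $x,f(x),\dots,f^{N-1}(x)$, and each of these finitely many conditions can be handled together via Proposition~\ref{prop:potential-properties}(1). Alternatively, one can just work with all $n$ directly.

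\textbf{Step 2: Alice's strategy.} Fix a small $\epsilon>0$ to be chosen. At stage $i$, Bob's ball $B_i$ has radius $r_i$. Alice considers the times $n$ for which the pullbacks $U_n(x)$, $x\in K\cap B_i$, have size in the window $[\beta^{k+1} r_i,\beta^k r_i]$ for a fixed large $k$. For each such $n$ she looks at the pullbacks $W$ of $B(y,\epsilon\eta)$ under $f^n$ that lie in such $U_n(x)$ and meet $B_i\cap K$. Each such $W$ has diameter at most about $\epsilon\cdot\beta^k r_i$ by Step 1 and distortion. Alice covers each $W$ by one ball. The number of such $W$ is bounded by a constant $M$ depending only on $\deg f$, the doubling constant of $J(f)$, and $\beta$, $k$: at a fixed scale, the pullbacks $U_n$ of comparable size meeting $B_i$ are boundedly many by doubling, and there are boundedly many relevant $n$ per window, by the exponential decay in Step 1. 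Choosing $\epsilon$ small gives $M(\epsilon\beta^k)^s\le\beta^s$, so the budget \eqref{eq:budget} holds.

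\textbf{Step 3: verification.} Suppose $x_\infty\in K$ lies in none of Alice's balls. Then for every large $n$, the scale of $U_n(x_\infty)$ falls into the window of some stage $i$ with $x_\infty\in B_i$. Since $x_\infty$ avoided the balls chosen at that stage, $f^n(x_\infty)\notin B(y,\epsilon\eta)$. Hence $\inf_n d(f^n x_\infty,y)\ge\epsilon\eta>0$ up to finitely many $n$, and finitely many early times can be absorbed because the orbit never hits $y$ there (or by shrinking the radius). This gives $x_\infty\in E(f,y)$.

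\textbf{Main obstacle.} The hardest step is the counting in Step 2: ensuring that the number of pullbacks of $B(y,\epsilon\eta)$ relevant at one stage is bounded independently of $i$, so that a fixed $\epsilon$ works for all $s>0$. Degree and injectivity control the multiplicity at a single time $n$. The difficulty is doing this uniformly over the finitely many $n$ in a window, when the point $x$ moves within $B_i$ and $|Df^n|$ varies. I would try first to use bounded distortion to compare $|Df^n|$ on $B_i\cap K$, which reduces the question to a doubling count at a single scale.
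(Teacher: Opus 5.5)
Your overall architecture matches the paper's. You pass to the $0$-potential game, delete at stage $i$ small balls around pull-backs of a neighbourhood of $y$ at a scale tied to $r_i$, let points of $J(f)\setminus K$ win for free, and capture $x_\infty$ by the univalent branch along its own orbit. The gap is exactly the counting step you flag, and the route you suggest for it does not work.

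With your windows the univalent pieces $U_n(x)$ have size $\asymp\beta^k r_i\ll r_i$. So for the relevant $n$ the map $f^n$ is far from univalent on $B_i$, and $|Df^n|$ is not comparable across $B_i\cap K$; there is no bounded distortion to invoke there. Likewise, exponential decay bounds the number of $n$ with $\operatorname{size}(U_n(x))$ in a window only for a fixed $x$. Since $|Df^n|$ lies between constant multiples of $\lambda^n$ and $(\sup|Df|)^n$, a priori the relevant times range over an interval of length $\asymp\log(1/r_i)$ as $x$ moves in $B_i\cap K$. At a fixed time the degree gives nothing either: $f^{-n}(B(y,\epsilon\eta))$ may have $(\deg f)^n$ components. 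What helps is that distinct same-time pieces are separated by their univalent neighbourhoods. So the bound $M$, uniform in $i$ and independent of $\epsilon$, is not established, and without it \eqref{eq:budget} fails for small $s$.

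The missing idea is Lemma~\ref{lem:countK}: make every relevant piece tiny compared with its own univalent domain, by a ratio depending only on $\beta$. The paper pulls back $B(y,t)$ with $a\delta/t\ge20/\beta$ and counts branches with $D_\phi\in[r_i,r_{i-1})$. It deletes only $\overline B_J(p_\phi,\eta D_\phi)$, which contains the pull-back of the much smaller target $B(y,\epsilon)$, where $H\epsilon/t\le\eta$. Then each relevant piece lies in $\phi(B(y,\delta))$ for every other relevant $\phi$.
\begin{itemize}
\item Injectivity of $f^n$ on $\phi(B(y,\delta))$ gives at most one piece per time $n$.
\item For times $n_1<n_2$, the map $f^{n_1}$ sends the later piece into $B(y,\delta)$. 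Expansion along the intermediate orbit in $O$, together with Koebe, then gives $D_{\phi_2}\le M\lambda^{-(n_2-n_1)}D_{\phi_1}$, hence $n_2-n_1<\log(M/\beta)/\log\lambda$.
\end{itemize}
This yields $L_\beta$ independent of $i$, $t$ and $\eta$; only afterwards are $\eta$, and then $\epsilon$, chosen depending on $s$. Your scheme can be repaired the same way, but this cross-time comparison is the essential step and it is absent. You could take $U_n$ larger than $B_i$ by a $\beta$-dependent factor, or localise to sub-balls of radius $\ll\beta^k r_i$ and run the comparison there.

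A smaller point concerns Step~3. The justification that the orbit never hits $y$ at early times is false; for example $x_\infty=y\in K$ is possible. Fix it as the paper does, by choosing the scale after Bob's first move so that every piece, including the one for $n=0$, has diameter $<r_0$. Alternatively, discard the countable set $\bigcup_n f^{-n}(y)$ via Proposition~\ref{prop:potential-properties}(3).
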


If $K=\varnothing$ or $y\notin K$, then $S(K,y)=J(f)$.
Indeed, in the second case compactness and forward invariance give
$K\subset E(f,y)$. This case is trivial, so we may assume that $y\in K$.

\subsection{Inverse branches}

\begin{lemm}\label{lem:extension}
There are an open neighbourhood $O \subset \CC$ of $K$, constants $A\in(0,1]$
and $\lambda>1$, and $\delta>0$ with the following properties.
\begin{enumerate}
\item If $z,f(z),\ldots,f^{n-1}(z)\in O$, then
\begin{equation}\label{eq:segment-expansion}
 |Df^n(z)|\ge A\lambda^n.
\end{equation}
\item For each $x\in K$ and $n\ge0$ there is an inverse branch
$\psi$ of $f^n$ on $B(f^n(x),4\delta)$ such that
$\psi(f^n(x))=x$ and
\[
 f^k\bigl(\psi(B(f^n(x),4\delta))\bigr)\subset O
  \ (0\le k<n).
\]
\end{enumerate}
\end{lemm}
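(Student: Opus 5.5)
For part (1) we will use a standard continuity argument to pass from uniform expansion on $K$ to expansion along orbit segments in a neighbourhood. By hypothesis there are $C>0$ and $\lambda_0>1$ with $|Df^n(x)|\ge C\lambda_0^n$ for $x\in K$ and all $n\ge0$. First choose $N$ so large that $C\lambda_0^N\ge 4$, say. Since $K$ is compact and $z\mapsto|Df^N(z)|$ is continuous, there is an open neighbourhood $O_1$ of $K$ with $|Df^N(z)|\ge 2$ for $z\in O_1$. Next choose $O\subset O_1$ so small that $O$ contains no critical points of $f$; this is possible because $K\cap\Crit(f)=\varnothing$, since $|Df^n|>0$ on $K$. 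We also require that $|Df^N(z)|\ge2$ whenever $z\in O$ and $f^j(z)$ stays close to $K$. Concretely, take $O$ inside a small $\eta$-neighbourhood of $K$ such that $z\in O$ implies $f^j(z)$ lies in $O_1$'s controlled region for $j<N$.

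The simplest way to organize this is the following. Let $m=\inf_{O}|Df|>0$. For an orbit segment $z,\dots,f^{n-1}(z)\in O$, write $n=qN+p$ with $0\le p<N$. Each block of length $N$ starts at a point of $O\subset O_1$, so it contributes a factor at least $2$. The remainder contributes at least $m^p\ge \min(1,m)^{N}$. Hence
\[
|Df^n(z)|\ge \min(1,m)^N\,2^{q}\ge A\lambda^n
\]
with $\lambda=2^{1/N}$ and $A=\min(1,m)^N2^{-1}$ (adjusted to lie in $(0,1]$). This proves (1). Note that only $z\in O_1$ at block starts is needed, so $O=O_1\setminus(\text{nbhd of }\Crit(f))$ suffices.

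For part (2) we will shrink $O$ slightly and choose $\delta$ by compactness. Fix a smaller open neighbourhood $O'$ with $K\subset O'\subset\overline{O'}\subset O$, and let $\eta=\dist(K,\CC\setminus O')>0$. Since $O$ avoids $\Crit(f)$ and $f$ is uniformly continuous, there is $\delta_1>0$ with the following property: for every $w\in\overline{O'}$, $f$ is injective on $B(w,\eta')$ for a suitable $\eta'$, and the local inverse branch of $f$ sending $f(w)$ to $w$ is defined on $B(f(w),\delta_1)$, with image inside $B(w,\eta)$. This follows from the inverse function theorem together with a uniform lower bound on $|Df|$ near $\overline{O'}$.

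We will then construct $\psi$ by induction on $n$, pulling back step by step along the orbit $x,f(x),\dots,f^n(x)\subset K$. The key claim is that each intermediate pull-back $\psi_k(B(f^n(x),4\delta))$, which contains $f^{n-k}(x)$, has diameter less than $\min(\eta,\delta_1)$. Given this, it lies in $O'\subset O$, so the next inverse branch of $f$ is defined on it.

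The main obstacle is controlling these diameters uniformly in $n$. We plan to use the expansion from (1) via the Koebe distortion theorem. Because the branch $\psi$ is univalent on the larger disc $B(f^n(x),8\delta)$, we can apply Koebe on a disc of twice the radius. Then (1) gives $\diam \psi_k(B(f^n(x),4\delta))\lesssim \delta\,|D(f^k)(\cdot)|^{-1}\le \delta A^{-1}$, with a universal Koebe constant $C_K$ coming from the spherical-to-planar comparison. Choosing $\delta$ with $8C_K\delta A^{-1}<\min(\eta,\delta_1)$ closes the induction. To avoid circularity, we will run the induction with the ball $B(f^n(x),8\delta)$, assuming at each stage only that orbits of points in the pulled-back sets remain in $O$, which is exactly what the inductive hypothesis provides.
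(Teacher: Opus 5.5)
Your part (1) is correct and is essentially the paper's argument. The paper bounds $\min_{0\le r<N}|Df^r|$ on $O$ rather than $\inf_O|Df|$. For $\inf_O|Df|>0$ you should remove a \emph{closed} neighbourhood of $\Crit(f)$, so that $O$ stays open and $\overline O$ avoids $\Crit(f)$.

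The gap is in part (2): as you set it up, Koebe cannot close the induction. Applied to $\psi_k$ on $B(f^n(x),8\delta)$, it controls only $\psi_k(B(f^n(x),4\delta))$. So it lets you define $\psi_{k+1}$ on the $4\delta$-ball but not on the $8\delta$-ball. At the next step Koebe controls only the $2\delta$-ball, and so on. The usable radius halves at every step, which is fatal because $n$ is unbounded. Declaring that the induction runs on the $8\delta$-ball does not help as long as the size estimate comes from Koebe, since the growth bound degenerates at the boundary of the disc of univalence.

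What closes the induction is the observation in your last sentence, used \emph{instead of} Koebe. Let the inductive hypothesis be that $\psi_k$ is defined on $B=B(f^n(x),4\delta)$ with $\psi_j(B)\subset O'$ for $1\le j\le k$. Then for every $w\in B$ the point $z=\psi_k(w)$ has $z,\dots,f^{k-1}(z)\in O$. So (1) gives $|D\psi_k(w)|\le A^{-1}\lambda^{-k}$ on all of $B$, not just at the centre. Integrating along geodesics, with $\delta$ small enough that $B$ is geodesically convex, gives $\psi_k(B)\subset B(f^{n-k}(x),4\delta A^{-1})$. If $4\delta\le A\delta_1$, your one-step inverse at $f^{n-k-1}(x)$ is defined on this set and maps it into $B(f^{n-k-1}(x),\eta)\subset O'$. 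That is the hypothesis at stage $k+1$, with no loss of radius.

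So repaired, your route is valid but differs from the paper's. The paper takes $g=f^N$ with $|Dg|>2$ on $O$. By compactness it obtains inverse branches $h_x$ of $g$ on $B(g(x),a)$ with $|Dh_x|<1/2$ and all intermediate images in $O$, hence $h_x(B(g(x),a))\subset B(x,a/2)$. This containment can be iterated indefinitely with no cumulative constant. A single preliminary branch of $f^r$, $r<N$, together with $4\delta\le\min\{b,am\}$, places the first image inside a ball of radius $a$. Your version derives (2) directly from (1). The price is the factor $A^{-1}$ in the choice of $\delta$, and checking at each step that the entire pulled-back set stays in $O$, which the paper's block construction has built in.
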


\begin{proof}
Choose $N\ge1$ such that $\inf_K|Df^N|>4$. Since the derivatives
of $f^r$, $0\le r\le N$, do not vanish on $K$, there are a
neighbourhood $O$ of $K$ and $m>0$ such that
\[
 |Df^N|>2, \  \min_{0\le r<N}|Df^r|>m
  \ \hbox{on }O.
\]
Write $n=qN+r$, where $0\le r<N$. Assume that $z,f(z),\ldots,f^{n-1}(z)\in O$. By the chain rule,
\[
 |Df^n(z)|
 =|Df^r(f^{qN}(z))|
   \prod_{j=0}^{q-1}|Df^N(f^{jN}(z))| \geq 2^q |Df^r(f^{qN}(z))| \geq 2^q\min\{1,m\}.
\]
Setting $\lambda=2^{1/N}$, we have
$2^q=\lambda^{n-r}\ge\lambda^n/2$, since $r<N$. Therefore
\[
 |Df^n(z)|\ge\frac{\min\{1,m\}}{2}\lambda^n,
\]
which proves~\eqref{eq:segment-expansion} with
$A=\min\{1,m\}/2$.

Now we prove statement (2). Put $g=f^N$. For each $x\in K$, there exists an open neighbourhood $U_x$ of $x$ on which $g$ is injective. Shrink $U_x$ if necessary so that $f^k(U_x)\subset O$ for all 
$0 \leq k < N$. Choose $a_x>0$ such that
$B(g(x),2a_x)\subset g(U_x)$, and let
\[
 H_x=(g|_{U_x})^{-1}\big|_{B(g(x),2a_x)}.
\]
Then $ |DH_x(u)| < 1/2$ for every $u\in B(g(x),2a_x)$. Let $W_x=H_x(B(g(x),a_x))$, which forms a cover of $K$. Hence we can choose a finite subcover $W_{x_1},\ldots,W_{x_\ell}$ and let
$a=\min_{1\le j\le\ell}a_{x_j}$, decreasing $a$ if necessary
so that spherical balls of radius $a$ are geodesically convex. For any $x\in K$, choose $j$ with $x\in W_{x_j}$. Then $ B(g(x),a)\subset B(g(x_j),2a_{x_j})$, so $h_x:=H_{x_j}|_{B(g(x),a)}$ is an inverse branch of $g$
satisfying $h_x(g(x))=x$, $|Dh_x|<1/2$, and
$f^k(h_x(B(g(x),a)))\subset O$ for $0\le k<N$. Hence $h_x(B(g(x),a))\subset B(x,a/2)$.

Applying the same argument to $f^r$, $0\le r<N$, we obtain
$0 < b \leq a$ such that, for every $x\in K$ and $0\le r<N$,
there is an inverse branch $h_{x,r}$ of $f^r$ on $B(f^r(x),b)$
satisfying $h_{x,r}(f^r(x))=x$, $|Dh_{x,r}|<1/m$, and
$f^k(h_{x,r}(B(f^r(x),b)))\subset O$ for $0\le k<r$.
Hence $h_{x,r}(B(f^r(x),b))\subset B(x,b/m)$.
Choose $4\delta\le\min\{b,am\}$, so that
$h_{x,r}(B(f^r(x),4\delta))\subset B(x,a)$.
For $n=qN+r$, we first apply the inverse branch of $f^r$
and then successively apply $q$ inverse branches of $f^N$.
This finishes the proof.

\end{proof}

In the rest we fix $y\in K$ and let $\delta>0$
be as in Lemma~\ref{lem:extension}.
Let $\cG_n=\cG_n(y,\delta)$ denote the collection of inverse
branches $\phi$ of $f^n$ on $B(y,2\delta)$ such that
\begin{equation}\label{eq:good-branches}
 f^k\bigl(\phi(B(y,2\delta))\bigr)\subset O
  \ (0\le k<n).
\end{equation}
We write $\cG_n$ for $\cG_n(y,\delta)$ below.
Set $\cG_0=\{\mathrm{id}\}$. Each $\cG_n$ is finite.
For $0\le k\le n$ and $0<u\le2\delta$,
\begin{equation}\label{eq:intermediate-diam}
 \diam\bigl(f^k\circ\phi(B(y,u))\bigr)
 \le2A^{-1}\lambda^{-(n-k)}u.
\end{equation}
Indeed, the derivative of this inverse branch has norm at most
$A^{-1}\lambda^{-(n-k)}$ by
\eqref{eq:segment-expansion}, and integrating this derivative along shortest spherical geodesics
from $y$ to points of $B(y,u)$ gives the desired estimate. In particular, $\diam(\phi(B(y,2\delta)))\le4A^{-1}\delta$.

We use the following spherical Koebe estimates. Decrease $\delta$
so that $\delta<r_{\mathrm K}$. The proposition then applies to every
$\phi\in\cG_n$ with $v=y$ and $r=\delta$.

\begin{prop}[Spherical Koebe estimates {\cite[Section~2.4]{PRL07}}]\label{prop:spherical-koebe}
Let $f$ be a rational map of degree at least two. There are constants
$r_{\mathrm K}>0$ and $C\ge1$, depending only on $f$, such that the following
holds. For every $v\in J(f)$, $0<r<r_{\mathrm K}$, $n\ge0$, and every
inverse branch $\phi$ of $f^n$ on $B(v,2r)$, we have
\begin{equation}\label{eq:spherical-koebe-distortion}
 C^{-1}\le\frac{|D\phi(z)|}{|D\phi(w)|}\le C
 \ \text{for all }z,w\in B(v,r).
\end{equation}
Moreover, for every $0<t\le r$,
\begin{equation}\label{eq:spherical-koebe-balls}
 B(\phi(v),C^{-1}t|D\phi(v)|)
 \subset\phi(B(v,t))
 \subset B(\phi(v),Ct|D\phi(v)|),
\end{equation}
and
\begin{equation}\label{eq:spherical-koebe-lip}
 \frac{d(\phi(z),\phi(w))}{d(z,w)}\le C|D\phi(v)|
 \ \text{for all distinct }z,w\in B(v,r).
\end{equation}
\end{prop}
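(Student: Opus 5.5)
The plan is to reduce to the classical Koebe distortion theorem for univalent maps of the Euclidean unit disc. Two things have to be handled: first, that an inverse branch $\phi$ of $f^n$ on a spherical ball may have a large image on the sphere, so that no single chart can be fixed in advance; second, the passage between the spherical metric and Euclidean charts.

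\emph{Step 1: a uniform omitted set.} Since $\deg f\ge2$, $f$ has infinitely many repelling periodic cycles. Choose two disjoint finite forward invariant sets $Q_1,Q_2\subset\CC$, each with at least three points; for instance, take two distinct repelling cycles of period $\ge3$. Let $\eta>0$ be less than the spherical distance between $Q_1$ and $Q_2$, and take $r_{\mathrm K}<\eta/8$. Then for every $v\in J(f)$ and $r<r_{\mathrm K}$, the ball $B(v,4r)$ misses $Q_1$ or $Q_2$; call that set $Q=Q(v)$. If $\phi$ is an inverse branch of $f^n$ on $B(v,2r)$, then $\phi(B(v,2r))\cap Q=\varnothing$. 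Indeed, if $q\in Q$ lay in the image, then $f^n(q)\in Q\cap B(v,2r)$ by forward invariance, which is a contradiction. Thus every such $\phi$ is a univalent map of a ball of radius $2r$ into $\CC\setminus Q$, where $Q$ is one of two fixed finite sets with mutually $\eta$-separated points.

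\emph{Step 2: normalisation and compactness.} Precompose with a spherical isometry taking $v$ to $0$, and with the dilation $D\to B(0,2r)$, where the spherical and chordal/Euclidean metrics on $B(0,2r_{\mathrm K})$ are uniformly comparable. Postcompose with a spherical isometry $R$ that sends $\phi(v)$ to $0$. We obtain univalent maps $g:D\to\CC$ with $g(0)=0$ that omit a set $R(Q)$ of three points which remain uniformly separated. By Montel's theorem this family is normal on $D$. Each limit is either univalent or constant, and it omits three separated points or is a constant.

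The aim is to show that the desired inequalities hold uniformly on $D_{1/2}$, which corresponds to $B(v,r)$. There are two cases.
\begin{itemize}
\item If $g(D_{3/4})$ stays a definite spherical distance from $\infty$, then $g$ is a bounded planar univalent map on $D_{3/4}$. In that case the Euclidean Koebe distortion theorem, the $1/4$-theorem and the growth theorem give \eqref{eq:spherical-koebe-distortion}, \eqref{eq:spherical-koebe-balls} and \eqref{eq:spherical-koebe-lip} up to a bounded change of metric.
\item To guarantee this, use the omitted points. Since $g(0)=0$ and $g$ omits some point $q$ with $d(q,0)$ bounded below, Koebe's $1/4$-theorem bounds $|g'(0)|$ above, in chart coordinates where $\infty$ may be sent to one of the omitted points. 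The growth theorem then confines $g(D_{3/4})$ to a compact part of the chart. If needed, one rotates once more so that $\infty$ is moved to an omitted point of $R(Q)$ at distance bounded below from $0$.
\end{itemize}
Pulling back through the isometries and the dilation then yields the constant $C$, which depends only on $\eta$, that is, only on $f$.

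\emph{Main obstacle.} The delicate point is the uniformity in Step 2, namely controlling spherical versus Euclidean derivatives when $\phi(B(v,2r))$ is not small. I would handle this as follows. Choose the rotation $R$ so that one point $q_0\in R(Q)$ is sent to $\infty$; this is possible while keeping $0=R(\phi(v))$ at distance at least $\eta/2$ from $q_0$, because $Q$ has at least three points and at most one of them can be close to $\phi(v)$. Then $g$ is a genuine planar univalent map that omits a second point $q_1$, at bounded Euclidean distance from $0$. The $1/4$-theorem gives $|g'(0)|\le4|q_1|$, and the growth theorem bounds $|g|$ on $D_{3/4}$. On bounded planar sets the spherical and Euclidean metrics are uniformly comparable, so all three estimates follow. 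Finally, \eqref{eq:spherical-koebe-lip} follows from \eqref{eq:spherical-koebe-distortion} by integrating $|D\phi|$ along geodesics in the convex ball $B(v,r)$.
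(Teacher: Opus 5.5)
Your argument is correct. The paper gives no proof of its own here, only the citation \cite[Section~2.4]{PRL07}, so you are supplying a self-contained derivation rather than an alternative route. The omitted forward-invariant finite set is the right device. It makes the constants depend only on $f$, and it shows slightly more than is stated: $v\in J(f)$ is never used, and no smallness of $\phi(B(v,2r))$ is needed (in the paper's applications the pull-backs are small anyway).

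When you write it out, tighten the following points.
\begin{enumerate}
\item A spherical rotation cannot in general send $\phi(v)\mapsto0$ and $q_0\mapsto\infty$ at once. Use a rotation $R$ with $R(q_0)=\infty$, followed by translation by $-R(\phi(v))$; this is bounded because $d(\phi(v),q_0)\ge\eta/2$. These M\"obius maps form a compact family, hence are uniformly bi-Lipschitz with uniformly bounded derivative ratios, and that is what transfers the Euclidean estimates back to the sphere.
\item Take $\eta$ below the minimal distance between distinct points of $Q_1\cup Q_2$, since the separation inside $Q$ is what you actually use, not only the separation of $Q_1$ from $Q_2$.
\item The Montel paragraph plays no role and can be dropped.
\item For \eqref{eq:spherical-koebe-balls} with $t<r$, apply the $1/4$-theorem and the growth theorem on the subdisc corresponding to $B(v,t)$, which has radius at most $1/2$. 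For the left inclusion, enlarge $C$ using that $t|D\phi(v)|$ is bounded above by a constant (from $|g'(0)|\le4|q_1|$). Then the ball $B(\phi(v),C^{-1}t|D\phi(v)|)$ lies where the spherical and Euclidean metrics are comparable.
\end{enumerate}
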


\begin{lemm}\label{lem:captureK}
Let $0<\epsilon<\delta$. If $x\in K$ and
$f^n(x)\in B(y,\epsilon)$, there is $\phi\in\cG_n$ such that
$x\in\phi(B(y,\epsilon))$.
\end{lemm}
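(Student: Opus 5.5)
The idea is to take the inverse branch supplied by Lemma~\ref{lem:extension}(2) at the point $x$, which lives on a ball of radius $4\delta$ around $f^n(x)$. We then recentre it at $y$.

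Fix $x\in K$ and $n\ge0$ with $f^n(x)\in B(y,\epsilon)$. Lemma~\ref{lem:extension}(2) gives an inverse branch $\psi$ of $f^n$ on $B(f^n(x),4\delta)$ with two properties:
\[
\psi(f^n(x))=x,\qquad f^k\bigl(\psi(B(f^n(x),4\delta))\bigr)\subset O \ \ (0\le k<n).
\]
Since $d(f^n(x),y)<\epsilon<\delta$, every $z\in B(y,2\delta)$ satisfies
\[
d(z,f^n(x))<2\delta+\delta=3\delta<4\delta,
\]
so $B(y,2\delta)\subset B(f^n(x),4\delta)$.

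Define $\phi=\psi|_{B(y,2\delta)}$. Then $\phi$ is an inverse branch of $f^n$ on $B(y,2\delta)$: it is a holomorphic injective right inverse of $f^n$ there, obtained by restricting $\psi$. Moreover,
\[
f^k\bigl(\phi(B(y,2\delta))\bigr)\subset f^k\bigl(\psi(B(f^n(x),4\delta))\bigr)\subset O \quad (0\le k<n),
\]
so \eqref{eq:good-branches} holds and $\phi\in\cG_n$. For $n=0$ everything is trivial: $\phi=\mathrm{id}$ and $x\in B(y,\epsilon)$.

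Finally, $f^n(x)\in B(y,\epsilon)$ and $\phi(f^n(x))=\psi(f^n(x))=x$, so $x\in\phi(B(y,\epsilon))$.

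There is no real obstacle. The one point needing care is that $\cG_n$ consists of branches defined on exactly $B(y,2\delta)$, with branches identified by restriction. The margin $4\delta$ in Lemma~\ref{lem:extension}(2), as opposed to $2\delta$, is precisely what absorbs the shift of centre from $f^n(x)$ to $y$.
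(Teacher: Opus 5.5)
Your proposal is correct and follows essentially the same route as the paper: you restrict the branch from Lemma~\ref{lem:extension}(2) to $B(y,2\delta)\subset B(f^n(x),4\delta)$, which lies in $\cG_n$, and then use $\phi(f^n(x))=x$ with $f^n(x)\in B(y,\epsilon)$. Your explicit verification of the inclusion and of the final membership $x\in\phi(B(y,\epsilon))$ just spells out what the paper leaves implicit.
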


\begin{proof}
The ball $B(y,2\delta)$ is contained in
$B(f^n(x),4\delta)$. The restriction of the inverse branch obtained from
Lemma~\ref{lem:extension} to $B(y,2\delta)$ is the required
map $\phi\in\cG_n$.
\end{proof}

For each $0<t<\delta/4$ and $\phi\in\cG_n$, put
\[
 W_\phi=W_\phi(t)=\phi(B(y,t)), \ 
 p_\phi=\phi(y), \ D_\phi=\diam(W_\phi).
\]
Both $W_\phi$ and $D_\phi$ depend on $t$ and $\phi$;
we omit $t$ from the notation below.
Since $y \in K \subset J(f)$, $p_\phi$ also belongs to $J(f)$.
Proposition~\ref{prop:spherical-koebe} gives constants
$a>0$ and $H,Q\ge1$, independent of $t,n,\phi$, such that
\begin{align}
 B(p_\phi,a\delta D_\phi/t)
 &\subset\phi(B(y,\delta)),\label{eq:large-image}\\
 \phi(B(y,\epsilon))
 &\subset B(p_\phi,H\epsilon D_\phi/t)
  \ (\forall 0<\epsilon<t),\label{eq:small-image}\\
 \frac{d(\phi(z),\phi(w))}{d(z,w)}&\le \frac{QD_\phi}{t}
 \ \bigl(z,w\in B(y,\delta),\ z\ne w\bigr).
 \label{eq:branch-lip}
\end{align}
Indeed, \eqref{eq:spherical-koebe-balls} gives
\[
 C^{-1}t|D\phi(y)|\le D_\phi\le2Ct|D\phi(y)|.
\]
Using \eqref{eq:spherical-koebe-balls} and
\eqref{eq:spherical-koebe-lip}, we may therefore take
$a=(2C^2)^{-1}$ and $H=Q=C^2$. Also,
\begin{equation}\label{eq:diam-global}
 D_\phi\le2A^{-1}t\lambda^{-n}.
\end{equation}

\begin{lemm}\label{lem:countK}
For every $\beta\in(0,1)$ there is an integer $L_\beta\ge1$ with
the following property. For each
\begin{equation}\label{eq:t-choice}
 0<t<\delta/4 \mbox{ with } a\delta/t\ge20/\beta,
\end{equation}
for every $x\in J(f)$, $r>0$, and $0<\eta\le1$, the number
of branches $\phi\in\bigcup_{n\ge0}\cG_n$ satisfying
\begin{equation}\label{eq:active}
 r\le D_\phi<r/\beta, \ 
 \overline B_J(p_\phi,\eta D_\phi)\cap \overline B_J(x,r)\ne\varnothing
\end{equation}
is at most $L_\beta$.
\end{lemm}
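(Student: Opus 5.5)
Fix $\beta$ and $t$ as in \eqref{eq:t-choice}, a point $x\in J(f)$, a radius $r>0$ and $\eta\le1$, and let $\Phi$ be the set of branches satisfying \eqref{eq:active}. The plan has two steps. First, each $\phi\in\Phi$ comes with a large image ball $\phi(B(y,\delta))$ of radius about $a\delta D_\phi/t\ge 20D_\phi/\beta\ge 20r/\beta$; the scale condition \eqref{eq:t-choice} is what makes this ball dominate everything else in the configuration. Second, I show that two branches of $\Phi$ are either "the same up to bounded length" or give disjoint sets of comparable size. A packing argument in the doubling space then bounds $\#\Phi$.

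\textbf{Step 1: localization and separation of lengths.} For $\phi\in\Phi$ we have $d(p_\phi,x)\le r+\eta D_\phi\le r+r/\beta\le 2r/\beta$. Hence every $W_\phi$ with $\phi\in\Phi$ lies in $B(x,3r/\beta)$.

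By \eqref{eq:diam-global} and the lower Koebe bound, $D_\phi$ is comparable to $t|D\phi(y)|$. Uniform expansion along the orbit gives, for $\phi\in\cG_n$ and $\phi'\in\cG_m$ with $m=n+k$, a compatible factorisation. Write $\phi$ for the branch obtained by composing with $f^k$, so $f^k\circ\phi'$ is a branch of $f^{-n}$. Since $f^k\circ\phi'(B(y,2\delta))\subset O$ and \eqref{eq:segment-expansion} applies, $D_{\phi'}\lesssim \lambda^{-k}D_{f^k\circ\phi'}$.

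\textbf{Step 2: structure of the overlap (the main obstacle).} The key claim is this: if $\phi\in\cG_n$ and $\phi'\in\cG_m$ are in $\Phi$ with $m\ge n$, and $W_\phi$, $W_{\phi'}$ are not disjoint (or are merely close, within $3r/\beta$), then $f^{m-n}\circ\phi'$ coincides with $\phi$ near $y$.

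To prove it, I map by $f^n$. The point $p_{\phi'}$ lies in $B(p_\phi,a\delta D_\phi/t)\subset\phi(B(y,\delta))$, by the localization above and $a\delta/t\ge20/\beta$. So $f^n$ maps $p_{\phi'}$ into $B(y,\delta)$, and the branch $\phi^{-1}$ is well defined there. The point $f^n(p_{\phi'})=f^{m-n}\circ\phi'(y)$ is then a point of the set $\mathcal O$-orbit of $y$, lying at distance comparable to $d(p_\phi,p_{\phi'})\,t/D_\phi\lesssim t/\beta$ from $y$.

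This is where the obstacle lies. There could be many $k$ with $f^{-k}$-preimages of $y$ near $y$. However, the ratio condition $r\le D_\phi,D_{\phi'}<r/\beta$ together with the expansion estimate $D_{\phi'}\lesssim A^{-1}\lambda^{-k}D_{\phi}$ forces $\lambda^{k}\lesssim A^{-1}\beta^{-1}$. So $k=m-n\le k_\beta$ is bounded.

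\textbf{Step 3: counting.} For each fixed $k\le k_\beta$ and each fixed $n$, the branches in $\cG_{n+k}$ that compose to a given branch of $\cG_n$ number at most $(\deg f)^{k}$. Distinct branches of the same $\cG_n$ have disjoint images $W_\phi$, since they are different inverse branches on a simply connected ball. Each such $W_\phi$ contains a ball of radius $\gtrsim D_\phi\ge r$ by Koebe \eqref{eq:spherical-koebe-balls}, and all of them sit inside $B(x,3r/\beta)$. Comparing spherical areas, the number of these is $\lesssim\beta^{-2}$ for each level.

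Across levels, the lengths $n$ that occur all satisfy $r\le D_\phi\le 2A^{-1}t\lambda^{-n}$. Combined with the lower bound on $D_\phi$ from the Koebe derivative comparison, and the bounded-gap statement of Step 2 applied to any two members of $\Phi$, the set of lengths occurring in $\Phi$ has cardinality at most $k_\beta+1$.

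Multiplying these bounds gives $L_\beta\lesssim(k_\beta+1)(\deg f)^{k_\beta}\beta^{-2}$, which is independent of $t$, $x$, $r$ and $\eta$.
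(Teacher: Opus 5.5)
Your Step~1 localization and the per-level area count in Step~3 are fine. The gap is in Step~2, the step that bounds the number of levels, i.e.\ the estimate $m-n\le k_\beta$ for any two members of $\Phi$: it is derived from a false identity. You assert that for $\phi\in\cG_n$ and $\phi'\in\cG_m$ in $\Phi$ one has $f^{m-n}\circ\phi'=\phi$. You then use this to turn $D_{\phi'}\lesssim\lambda^{-k}D_{f^k\circ\phi'}$ into $D_{\phi'}\lesssim\lambda^{-k}D_\phi$.

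This factorization is the wrong way round. Take $f(z)=z^2$, $K=J(f)$ the unit circle and $y=1$; in angle coordinates the branches are $\phi_{n,j}(\theta)=(\theta+j)/2^n$. The branches $\phi_{3,1}$ and $\phi_{4,2}$ have the same centre $e^{2\pi i/8}$ and diameters in ratio close to $1/2$. So both satisfy \eqref{eq:active} with $x=e^{2\pi i/8}$, $r=D_{\phi_{4,2}}$ and, say, $\beta=1/4$. Yet $f\circ\phi_{4,2}=\phi_{3,2}\ne\phi_{3,1}$. Relatedly, $f^n(p_{\phi'})$ is $(f^n\circ\phi')(y)$, not $(f^{m-n}\circ\phi')(y)$.

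In general $f^{k}\circ\phi'$ and $\phi$ are two different branches in $\cG_n$, and their diameters need not be comparable for a nonlinear map. So your Step~1 estimate says nothing about $D_{\phi'}/D_\phi$. Without that ratio bound, the constraints $r\le D_\phi\le 2A^{-1}t\lambda^{-n}$ and Koebe only confine the levels to a window whose length grows with $\log(t/r)$. Hence no $r$-independent $L_\beta$ follows.

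The correct relation is $\phi'=\phi\circ\psi$ on $B(y,t)$, where $\psi=f^n\circ\phi'$ is an inverse branch of $f^{m-n}$. To get it you need the whole of $W_{\phi'}$, not just $p_{\phi'}$, inside $\phi(B(y,\delta))$. This follows from
$W_{\phi'}\subset B(x,3r/\beta)\subset B(p_\phi,5r/\beta)\subset B(p_\phi,a\delta D_\phi/t)\subset\phi(B(y,\delta))$,
using \eqref{eq:t-choice} and \eqref{eq:large-image}. Hence $W_{\phi'}=\phi\bigl(f^n(W_{\phi'})\bigr)$.

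Now \eqref{eq:intermediate-diam} applied to $f^n\circ\phi'$ gives $\diam f^n(W_{\phi'})\le 2A^{-1}\lambda^{-(m-n)}t$. Then \eqref{eq:branch-lip} for $\phi$ gives $D_{\phi'}\le 2QA^{-1}\lambda^{-(m-n)}D_\phi$. Combined with $D_{\phi'}/D_\phi>\beta$, this is exactly the bounded gap you wanted.

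The same containment with $m=n$, together with injectivity of $f^n$ on $\phi(B(y,\delta))$, shows that each level contains at most one member of $\Phi$. So the area count and the factor $(\deg f)^{k_\beta}$, which is an artifact of the reversed factorization, become unnecessary. This repaired argument is the paper's proof.
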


\begin{proof}
It suffices
to prove the bound for $\eta=1$.
For any branch in \eqref{eq:active}, we then have
\[
 d(p_\phi,x)\le r+D_\phi<r+r/\beta\le2r/\beta,
\]
and hence $W_\phi\subset B(x,3r/\beta)$.
For any two branches $\phi,\phi'$ satisfying \eqref{eq:active},
these bounds give
\[
 W_{\phi'}\subset B(x,3r/\beta)
 \subset B(p_\phi,5r/\beta).
\]
Since $D_\phi\ge r$, condition~\eqref{eq:t-choice} gives
$a\delta D_\phi/t\ge20r/\beta$. Hence, by
\eqref{eq:large-image},
\[
 W_{\phi'}\subset B(p_\phi,5r/\beta)
 \subset B(p_\phi,a\delta D_\phi/t)
 \subset\phi(B(y,\delta)).
\]

Now we show that at each fixed time $n$ there is at most one branch satisfying \eqref{eq:active}. In fact, if
$\phi_1,\phi_2\in\cG_n$ satisfy \eqref{eq:active}, then
$W_{\phi_2}\subset\phi_1(B(y,\delta))$. The map $f^n$ is
injective on $\phi_1(B(y,\delta))$ and maps $W_{\phi_2}$ onto $B(y,t)$.
Thus $W_{\phi_2}=W_{\phi_1}$, and the inverse branches agree.

Now let $\phi_i\in\cG_{n_i}$ satisfy \eqref{eq:active}, with
$n_1\le n_2$. The same argument gives
$f^{n_1}(W_{\phi_2})\subset B(y,\delta)$. Applying
\eqref{eq:intermediate-diam} to $f^{n_1}\circ\phi_2$ and then
\eqref{eq:branch-lip} to $\phi_1$, we obtain
\[
 D_{\phi_2}
 \le 2QA^{-1}\lambda^{-(n_2-n_1)}D_{\phi_1}.
\]
Let $M=\max\{1,2QA^{-1}\}$. Since
$r\le D_{\phi_i}<r/\beta$ for $i=1,2$, we have
\[
 \beta<\frac{D_{\phi_2}}{D_{\phi_1}}
 \le M\lambda^{-(n_2-n_1)}
\ \Longrightarrow \ 
 n_2-n_1<\frac{\log(M/\beta)}{\log\lambda}.
\]
It is therefore enough to take
\[
 L_\beta=2+\left\lceil\frac{\log(M/\beta)}{\log\lambda}\right\rceil.
\qedhere
\]
\end{proof}

\subsection{Proofs of Proposition~\ref{prop:expanding} and Theorem~\ref{thm:reduction}}
\label{sec:expanding-proofs}
When playing the games on $J(f)$, we use the closed balls $\overline B_J(x,r)$
with centres $x\in J(f)$. Fix $O,A,\lambda,\delta$ as in Lemma~\ref{lem:extension},
with $\delta$ decreased so that Proposition~\ref{prop:spherical-koebe}
applies. Fix its constant $C$, set $a=(2C^2)^{-1}$ and
$H=Q=C^2$ as above. For each $\beta \in (0, 1)$, let $L_{\beta}$ be given by Lemma~\ref{lem:countK}, uniformly for all $t$ satisfying~\eqref{eq:t-choice}.

\begin{proof}[Proof of Proposition~\ref{prop:expanding}]

Fix $y\in J(f)$. The case $y\notin K$ is trivial,
so we assume $y\in K$ and use the families $\cG_n$ for this fixed $y$ and $\delta$.
By Proposition~\ref{prop:conversion}, it suffices to show that
$S(K,y)$ is $0$-potential winning on $J(f)$.

Fix $s>0$ and $\beta\in(0,1)$ arbitrarily. Choose $\eta\in(0,1)$ such that
\begin{equation}\label{eq:etaK}
 L_\beta(\eta/\beta)^s\le\beta^s.
\end{equation}
After Bob first chooses $B_0=\overline B_J(b_0,r_0)$, choose $t$ satisfying
\eqref{eq:t-choice} and $2A^{-1}t<r_0$. Then choose
$\epsilon\in(0,t)$ such that $H\epsilon/t\le\eta$.
Alice chooses the empty collection $\varnothing$ on the first turn.

At each turn $i\ge1$, after Bob chooses $B_i=\overline B_J(b_i,r_i)$,
Alice chooses the balls $\overline B_J(p_\phi,\eta D_\phi)$ corresponding
to all branches $\phi\in\bigcup_{n\ge0}\cG_n$ satisfying
\begin{equation}\label{eq:crossingK}
 r_i\le D_\phi<r_{i-1}
\end{equation}
and $\overline B_J(p_\phi,\eta D_\phi)\cap B_i\ne\varnothing$. Since
$r_i\ge\beta r_{i-1}$, Lemma~\ref{lem:countK} bounds their
number by $L_\beta$. The radius of each chosen ball satisfies
\[
 \eta D_\phi<\eta r_{i-1}\le\frac{\eta}{\beta}r_i.
\]
Since there are at most $L_\beta$ such balls, by \eqref{eq:etaK} 
\[
 \sum_{\phi}(\eta D_\phi)^s
 \le L_\beta\left(\frac{\eta}{\beta}r_i\right)^s
 =\bigl[L_\beta(\eta/\beta)^s\bigr]r_i^s
 \le (\beta r_i)^s,
\]
where the sum is over the branches selected by Alice on this turn.
Thus Alice's choice satisfies~\eqref{eq:budget}.

Suppose $r_i\to0$, and let $x_\infty\in\bigcap_{i\ge0}B_i$
lie outside every ball chosen by Alice during the whole game. For other cases, Alice wins by default. If $x_\infty\notin K$, then it belongs to $S(K,y)$ and Alice wins. So assume that $x_\infty \in K$, we shall show that $x_\infty\in E(f, y)$. If $f^n(x_\infty)\notin B(y,\epsilon)$ for all $n\ge0$, then clearly $x_\infty\in E(f,y)$. Otherwise, suppose that $f^n(x_\infty)\in B(y,\epsilon)$
for some $n\ge0$. Lemma~\ref{lem:captureK} gives a branch
$\phi\in\cG_n$ with $x_\infty\in\phi(B(y,\epsilon))$.
By \eqref{eq:small-image} and
$H\epsilon/t\le\eta$, we have
\[
 x_\infty\in\phi(B(y,\epsilon))\cap J(f)
 \subset \overline B_J(p_\phi,\eta D_\phi).
\]
By \eqref{eq:diam-global}, $D_\phi<r_0$. Since $r_i \to 0$, there is a unique index $i\ge1$ with $r_i\le D_\phi < r_{i-1}$.
Thus $x_\infty\in B_i\cap \overline B_J(p_\phi,\eta D_\phi)$.
Therefore by~\eqref{eq:crossingK}, this ball is one of the balls
chosen by Alice on turn $i$, a contradiction.
\end{proof}

\begin{proof}[Proof of Theorem~\ref{thm:reduction}]
Fix $y\in J(f)$. We prove that $E(f,y)$ is absolute winning
on $J(f)$ under the assumption that $\Ecrit$ is absolute winning. For each $m\ge1$, put
\[
 K_m=\left\{x\in J(f):
 \dist(f^n(x),\CJ)\ge m^{-1}\text{ for every }n\ge0\right\}.
\]
Each $K_m$ is compact and forward invariant and $\Ecrit=\bigcup_{m\ge1}K_m$. By
Definition~\ref{def:away}, $f$ is uniformly expanding on every
nonempty $K_m$.
Proposition~\ref{prop:expanding} shows that $S(K_m,y)$ is absolute winning on $J(f)$ for each $m$. Since
\begin{equation}\label{eq:intersection-identity}
 \Ecrit\cap\bigcap_{m\ge1}S(K_m,y)
 =\Ecrit\cap E(f,y)
\end{equation}
we have that $\Ecrit\cap E(f,y)$
is absolute winning on $J(f)$. Hence $E(f,y)$ is absolute winning as well. Since $y\in J(f)$ was arbitrary,
the theorem follows.
\end{proof}

\section{Backward contraction and absolute winning}
\label{sec:critical}

In this section, let $f\in\sA_{\mathbb C}$, with
$\CJ\ne\varnothing$, be expanding away from critical points
and backward contracting, and put $\ell=\ell_{\max}(f)$.
The main aim of this section is to prove the following proposition,
from which Theorem~\ref{thm:rls} is easily deduced.

\begin{prop}\label{prop:critical-winning}
Let $f\in\sA_{\mathbb C}$, with $\CJ\ne\varnothing$, be expanding
away from critical points and backward contracting. Then
$\Ecrit$ is absolute winning on $J(f)$.
\end{prop}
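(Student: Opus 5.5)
By Proposition~\ref{prop:conversion} it suffices to show that $\Ecrit$ is $(s,\beta)$-potential winning on $J(f)$ for every $s>0$ and $\beta\in(0,1)$. By Proposition~\ref{prop:potential-properties}(1) and $\Ecrit=\bigcap_{c\in\CJ}E(f,c)$, with $\CJ$ finite, it is enough to treat one critical point $c\in\CJ$ at a time. The strategy mirrors the proof of Proposition~\ref{prop:expanding}. Alice will try to delete small balls around all "critical pull-backs", that is, the components $W$ of $f^{-n}(\tB(c,\epsilon))$ meeting $J(f)$, for a small $\epsilon$ fixed after Bob's first move. Any point whose orbit enters $\tB(c,\epsilon)$ lies in one of them. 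Two features of the expanding case are no longer available. First, pull-backs need not be univalent, so there is no Koebe control of the shape of $W$. Second, pull-backs need not shrink exponentially in $n$.

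\textbf{Step 1: geometry of pull-backs.} Following~\cite{RLS}, I will use backward contraction to build a nice couple $(\hV,V)$ of neighbourhoods of $\CJ$. Each pull-back of $V$ is then either disjoint from $\hV$ or contained in $V$, and first-return pull-backs of $\hV$ map with degree bounded in terms of $\ell$ and the number of critical points. I will combine this with the \emph{polynomial shrinking} estimate of~\cite{RLS}: for pull-backs $W$ of $\tB(c,\delta)$ by $f^n$ there are $C_0>0$ and $\theta>0$ with $\diam W\le C_0 n^{-\theta}$, and $\theta$ can be taken as large as we like by taking the backward contraction exponent $r$ large. What I want is a uniform statement replacing \eqref{eq:large-image}--\eqref{eq:small-image}. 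For each pull-back $W$ of $\tB(c,\epsilon)$ meeting $J(f)$, the corresponding pull-back $\widehat W$ of a definite enlargement $\tB(c,\epsilon')$, with $\epsilon'\gg\epsilon$, should contain $B(p_W,\mu\,\diam\widehat W)$ for some $p_W\in J(f)\cap W$. The shrinking ratio $\diam W/\diam\widehat W$ should be at most some $\tau(\epsilon/\epsilon')\to0$, uniformly in $n$. For this I will use bounded-degree Koebe/Grötzsch modulus estimates along the decomposition of the orbit into excursions near $\CJ$ (controlled by the nice couple) and passages through $K(V)$ (controlled by Lemma~\ref{lem:extension}, where Definition~\ref{def:away} applies). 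Taking $\epsilon/\epsilon'$ small then makes $W\subset\overline B_J(p_W,\eta\diam\widehat W)$ with any prescribed $\eta$.

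\textbf{Step 2: counting.} I then need an analogue of Lemma~\ref{lem:countK}: at most $L_\beta$ enlarged pull-backs $\widehat W$ satisfy $r\le\diam\widehat W<r/\beta$ and meet a ball of radius $r$. Two such pull-backs of the same time $n$ are disjoint or equal, because the niceness of $\hV$ makes pull-backs nested or disjoint. The large-inner-ball property then bounds their number by a packing argument, using that $J(f)$ is doubling. For different times $n_1<n_2$, nesting gives $f^{n_1}(\widehat W_2)\subset\widehat{\tB}$, and hence $\diam\widehat W_2\le\tau\cdot\diam\widehat W_1$ for each return to the critical neighbourhood. Combined with exponential contraction during passages through $K(V)$, this bounds $n_2-n_1$ in terms of the number of returns, and therefore bounds the count. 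With $L_\beta$ in hand, Alice plays exactly as in the proof of Proposition~\ref{prop:expanding}. At turn $i$ she deletes $\overline B_J(p_W,\eta\diam\widehat W)$ for every $\widehat W$ with $r_i\le\diam\widehat W<r_{i-1}$ that meets $B_i$, and chooses $\eta$ so that $L_\beta(\eta/\beta)^s\le\beta^s$. If $x_\infty$ has some $f^n(x_\infty)\in\tB(c,\epsilon)$, then $x_\infty$ lies in a deleted ball. Otherwise, by \eqref{eq:local-degree}, $\dist(f^n(x_\infty),c)\ge a_0\epsilon^{1/\ell_c}$ for all $n$, so $x_\infty\in E(f,c)$.

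\textbf{Main obstacle.} The main obstacle is Step 1: showing uniformly in $n$ that $\widehat W$ contains a round ball comparable to its diameter and that the ratio $\diam W/\diam\widehat W$ is small, despite unbounded total degree along long orbits. I would first try to reduce to first-return maps to $\hV$, which have bounded degree by the nice couple construction, and iterate the modulus estimate. Once Proposition~\ref{prop:critical-winning} is proved, Theorem~\ref{thm:rls} follows from Theorem~\ref{thm:reduction}.
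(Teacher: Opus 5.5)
There is a genuine gap, and it is the step you flag as the main obstacle. Step~1 is not merely hard; it fails for the family you want Alice to delete. Because you want to catch \emph{every} entry of the orbit into $\tB(c,\epsilon)$, you need shape control for all pull-backs, including non-univalent ones of unbounded degree.

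Your two desired conclusions together force a uniform modulus bound. These are a ball $B(p_W,\mu\diam\widehat W)\subset\widehat W$ with $p_W\in W$, and $\diam W\le\tau\diam\widehat W$ with $\tau<\mu$. Together they give $\Mod(\widehat W;W)\ge\log(\mu/\tau)$.

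Iterating bounded-degree estimates over first returns to $\hV$ cannot give this. The available modulus is divided by the local degree at each return, and a pull-back may make unboundedly many returns when a critical point is recurrent. Consider the model case where $f^n:\widehat W\to\tB(c,\epsilon')$ has degree $D$ and all its critical values lie in $\tB(c,\epsilon)$. Then $\widehat W\setminus\overline W$ is an unbranched $D$-fold cover of $\tB(c,\epsilon')\setminus\overline{\tB(c,\epsilon)}$. Hence $\Mod(\widehat W;W)=\Mod(\tB(c,\epsilon');\tB(c,\epsilon))/D\to0$ as $D\to\infty$, for every fixed ratio $\epsilon/\epsilon'$.

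The same defect breaks your per-return contraction $\diam\widehat W_2\le\tau\diam\widehat W_1$ in Step~2. Its analogue \eqref{eq:nested-contraction} in the paper uses that $f^n|_W$ is univalent, so that modulus is conformally invariant.

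The missing idea is to delete only around \emph{univalent} pull-backs, and to catch only \emph{one}, suitably chosen, entry of each bad orbit. The paper takes $\cW_j$ to be the sets $\phi(V_j^c)$, with $\phi$ a univalent inverse branch of $f^n$ on $\hV_j^c$. Koebe then gives the inner ball and the hole estimate, and $\Lambda$-niceness gives the halving of nested domains. The count in Lemma~\ref{lem:critical-geometry} follows from disjointness and an area bound, with no control of time differences needed.

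Points whose approach to $\CJ$ is not witnessed by univalent pull-backs are handled separately.
\begin{enumerate}
\item On the first turn Alice covers $(J(f)\setminus\Jcon(f))\cap B_0$. For any $s>0$ this fits within a prescribed fraction of her budget, by \eqref{eq:nonconical}.
\item For conical $x_\infty\notin\Ecrit$, one composes a univalent branch on $B(f^{n_k}(x_\infty),\rho)$ at a conical time $n_k$ with the univalent first-entry branch supplied by backward contraction (Lemma~\ref{lem:first-entry}). This produces a univalent pull-back of $\hV_j^{c_k}$ of arbitrarily small diameter whose hole $\phi_k(\tB(c_k,d_j))$ contains $x_\infty$.
\item Since $\rho$ depends on $x_\infty$, Alice must run all scales $u_j$ simultaneously, with budgets proportional to $2^{-(j+1)}$. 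So your single $\epsilon$ fixed after Bob's first move would also have to be replaced.
\end{enumerate}
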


\begin{proof}[Proof of Theorem~\ref{thm:rls}]
By Proposition~\ref{prop:critical-winning}, $\Ecrit$ is
absolute winning on $J(f)$. Theorem~\ref{thm:reduction}
then implies that $E(f,y)$ is absolute winning on $J(f)$
for every $y\in J(f)$.
\end{proof}

\subsection{Nice couple and pull-backs}

Following~\cite{RLS}, a set $V\subset\CC$
is called a {\it nice set} if the following conditions hold:
\begin{enumerate}
\item $\overline V$ is disjoint from the forward orbits of each point in
$\Crit(f)\setminus J(f)$ and every periodic orbit not in $J(f)$;
\item $\CJ\subset V$;
\item each connected component of $V$ is an open topological
 disk containing exactly one point in $\CJ$;
\item $f^n(\partial V)\cap V=\varnothing$ for each integer $n\ge1$.
\end{enumerate}
For $c\in\CJ$, write $V^c$ for the connected component of $V$
that contains $c$.
A pair $(\hV,V)$ of nice sets is called a {\it nice couple} if
\[
 \overline V\subset\hV
 \ \text{and}\ f^n(\partial V)\cap\hV=\varnothing
 \ \text{for all }n\ge1.
\]
Pull-backs of a nice set are either disjoint or nested.

For open sets $U,V\subset\CC$ with $\overline U\subset V\subsetneq\CC$,
we define $\Mod(V;U)$ as the supremum of the moduli of annuli in
$V$ separating $U$ from $\CC\setminus V$, with the normalization
$\Mod\{1<|z|<a\}=\log a$ for $a>1$.

Given a nice set $V$, a connected component of the set $V\cap\bigcup_{n\ge1}f^{-n}(V)$ is called a {\it return domain}.
These are maximal pull-backs of $V$ contained
in $V$. Given $\Lambda>0$, we say that $V$ is
{\it $\Lambda$-nice} if every return domain $Q\subset V$
satisfies
\[
 \overline Q\subset V
 \ \text{and}\ \Mod(V; Q)\ge\Lambda.
\]

The {\it conical Julia set} $\Jcon(f)$ is the set of points
$x\in J(f)$ for which there exist a radius $\rho_x>0$ and
infinitely many integers $n\ge1$ such that $f^n$ has a
univalent inverse branch on $B(f^n(x),\rho_x)$ taking
$f^n(x)$ to $x$.

We use the following results from~\cite{RLS}.

\begin{prop}\label{prop:rls-input}
Under the hypotheses of Proposition~\ref{prop:critical-winning},
the following hold.
\begin{enumerate}
\item For every $b>0$, $f$ satisfies the Polynomial Shrinking
Condition with exponent $b$: there are
$\rho_0,C_b>0$ such that for each $x\in J(f)$, each $n \ge 1$ and each pull-back $W$ of $B(x,\rho_0)$ under $f^n$, we have $\diam(W)\le C_b n^{-b}$.
\item Given $\Lambda>0$, there is a constant $R>4$ such that
the following property holds.
For every sufficiently small $u>0$, there is a nice
couple $(\hV,V)$ such that $V$ is $\Lambda$-nice and, for each
$c\in\CJ$,
\begin{equation}\label{eq:nice-sandwich}
 \tB(c,Ru/2)\subset\hV^c\subset\tB(c,Ru), \  \tB(c,u)\subset V^c\subset\tB(c,2u).
\end{equation}
\item The conical Julia set satisfies
\begin{equation}\label{eq:nonconical}
 \HD(J(f)\setminus\Jcon(f))=0.
\end{equation}
\end{enumerate}
\end{prop}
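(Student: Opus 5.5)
The plan is not to reprove these facts but to extract each one from~\cite{RLS}. Before citing anything, I will check that the standing hypotheses match theirs. In~\cite{RLS} the maps are rational, have no neutral periodic points (our class $\sA_{\mathbb C}$), are expanding away from critical points in the sense of Definition~\ref{def:away}, and satisfy $BC(r)$ for all sufficiently large $r$. By Definition~\ref{def:bc}, a backward contracting map satisfies $BC(r)$ for every $r>1$, so their results apply with any exponent we like. The one bookkeeping point is that~\cite{RLS} sometimes states results with $r$ large depending on auxiliary constants (such as the exponent $b$ or the modulus $\Lambda$). Since our $f$ is $BC(r)$ for every $r$, those hypotheses are automatically met.

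For~(1), I will invoke the theorem of~\cite{RLS} that a map which is expanding away from critical points and $BC(r)$ satisfies the Polynomial Shrinking Condition, with an exponent $b$ that tends to infinity as $r\to\infty$. The underlying mechanism explains why every $b>0$ is allowed. Fix a pull-back $W$ of $B(x,\rho_0)$ under $f^n$ and split the orbit $W,f(W),\dots,f^n(W)$ into two kinds of stretches. One kind consists of stretches during which the pieces stay outside a fixed neighbourhood of $\CJ$; there, uniform expansion on the sets $K(V)$ gives exponential contraction of diameters. The other kind consists of passages close to a critical value; there, backward contraction with exponent $r$ shrinks diameters by a factor comparable to $r^{-1}$ at each visit, after the $\ell_c$-th root loss in~\eqref{eq:local-degree}. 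Counting the visits gives $\diam(W)\le C_b n^{-b}$, and $b$ can be made arbitrarily large by taking $r$ large. I will cite this rather than redo it.

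For~(2), I will cite the construction of nice couples in~\cite{RLS}. Nice sets are built as suitable components of pull-backs of the small neighbourhoods $\tB(c,u)$, and $\hV$ is built similarly from $\tB(c,Ru)$. Backward contraction with exponent $r\gg R$ then forces every return domain $Q$ of $V$ to be much smaller than $V$. Koebe-type and Gr\"otzsch arguments convert this into $\overline Q\subset V$ and $\Mod(V;Q)\ge\Lambda$. The sandwich~\eqref{eq:nice-sandwich}, with the factors $2$ and $R/2$, should come directly from the way $V^c$ and $\hV^c$ are squeezed between pull-backs of balls in their proof. I will check that their normalization of $\Mod$ agrees with ours, which is $\log a$ for a round annulus; this only rescales $\Lambda$.

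For~(3), I will use the result of~\cite{RLS} that $J(f)\setminus\Jcon(f)$ has Hausdorff dimension zero. The idea is that a non-conical point has an orbit which, after some time, falls ever more deeply into the critical neighbourhoods. The point can therefore be covered, for each large $n$, by pull-backs of $B(\,\cdot\,,\rho_0)$ of diameter at most $C_b n^{-b}$. Polynomial shrinking with arbitrarily large $b$, combined with the at most exponential growth in the number of such pull-backs, makes the $s$-dimensional Hausdorff sums vanish for every $s>0$. I expect the main obstacle to be the precise matching of statements with~\cite{RLS} rather than any new mathematics: the exact form of the sandwich in~(2), and in~(3) making sure that their notion of conical point agrees with ours (univalent inverse branches on balls of a fixed radius $\rho_x$ for infinitely many $n$).
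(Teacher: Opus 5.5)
Your plan matches the paper, which also proves this proposition purely by citation: (1) from \cite[Theorem~A]{RLS}, (2) from \cite[Lemma~3.13]{RLS}, and (3) from \cite[Corollary~6.3]{RLS}, and the only bookkeeping needed is that backward contraction here means $BC(r)$ for every $r>1$. One caution: the mechanism you sketch for (3) fails as stated, because covering by exponentially many (roughly $(\deg f)^n$) pull-backs of diameter at most $C_b n^{-b}$ gives $s$-sums of order $(\deg f)^n n^{-bs}\to\infty$, so present (3) only as a citation and drop that heuristic.
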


Statement~(1) follows from~\cite[Theorem~A]{RLS},
statement~(2) from~\cite[Lemma~3.13]{RLS}, and
statement~(3) from~\cite[Corollary~6.3]{RLS}.
The proof of~\cite[Lemma~3.13]{RLS} also allows the constant
$R$ in~(2) to be chosen larger than any prescribed constant.

\begin{lemm}[{\cite[Lemma~3.3]{RLS}}]\label{lem:modulus-diameter}
Fix $0<\rho_*<\diam(\CC)/2$.
There exists a constant $C_0>0$ such that, for all nonempty open
connected sets $U,V\subset\CC$ with $\overline U\subset V$ and
$\diam(V)<\rho_*$, we have
\[
 \diam(U)\le C_0\exp\bigl(-\Mod(V;U)\bigr)\diam(V).
\]
\end{lemm}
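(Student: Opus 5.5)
The plan is to reduce to a planar extremal-length estimate, namely Teichmüller's modulus theorem, after moving the problem into a Euclidean chart whose distortion is controlled by $\rho_*$. As a preliminary, note that $\diam(U)\le\diam(V)$ always holds. So for any fixed threshold $M_0$, the inequality is trivial when $\Mod(V;U)\le M_0$, provided $C_0\ge e^{M_0}$. It therefore suffices to treat the case where $\Mod(V;U)$ is large.

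\emph{Step 1: normalization.} Pick $v\in V$. Then $V\subset B(v,\rho_*)$, and because $\rho_*<\diam(\CC)/2$, the antipode $v^*$ of $v$ lies at spherical distance at least $\diam(\CC)/2-\rho_*>0$ from $\overline{B(v,\rho_*)}$. Rotate the sphere (a spherical isometry) so that $v^*=\infty$, and use the stereographic chart $\pi$ from $\infty$. On $\overline{B(v,\rho_*)}$ this chart is bi-Lipschitz onto a bounded Euclidean disk, with constants depending only on $\rho_*$. Moduli are conformal invariants, so $\Mod(V;U)=\Mod(\pi V;\pi U)$. Consequently spherical and Euclidean diameters of $U$ and $V$ agree up to a factor depending only on $\rho_*$.

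\emph{Step 2: the planar estimate.} Work with $U,V\subset\mathbb C$ bounded and write $m=\Mod(V;U)$. Choose $u_1,u_2\in\overline U$ with $|u_1-u_2|\ge\diam(U)/2$. Since $V$ is bounded and connected and $V\ne\mathbb C$, there is a point $w\in\partial V$ with $|w-u_1|\le\diam(V)$. Any annulus $A\subset V$ separating $U$ from $\CC\setminus V$ then separates $\{u_1,u_2\}$ from $\{w,\infty\}$. Teichmüller's extremal problem gives
\[
 \Mod(A)\le\log\Bigl(16\Bigl(\tfrac{|w-u_1|}{|u_2-u_1|}+1\Bigr)\Bigr),
\]
where the convention is matched to the normalization $\Mod\{1<|z|<a\}=\log a$; only an absolute additive constant may change. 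Taking the supremum over $A$ yields
\[
|u_2-u_1|\le 16e^{-m}\bigl(|w-u_1|+|u_2-u_1|\bigr).
\]
When $m\ge M_0:=\log 32$, the term $|u_2-u_1|$ can be absorbed on the left, giving $|u_2-u_1|\le 32e^{-m}\diam(V)$. Hence $\diam(U)\le 64e^{-m}\diam(V)$.

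\emph{Step 3: combination.} Transfer the bound back to the sphere using the bi-Lipschitz constants from Step 1, and take $C_0$ to be the larger of the resulting constant and $e^{M_0}$. The only step needing care is Step 1: the uniformity of the chart is exactly where the hypothesis $\rho_*<\diam(\CC)/2$ is used. We must also make sure that the boundary point $w$ lies in the region where the chart is controlled. This holds because $\partial V\subset\overline{B(v,\rho_*)}$. Everything else is the classical Teichmüller bound.
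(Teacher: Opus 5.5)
The paper does not prove this lemma; it quotes it from \cite[Lemma~3.3]{RLS}, so there is no internal argument to compare yours with. Your proposal is a correct, self-contained proof along the standard lines. Conformal invariance of $\Mod$ and a stereographic chart that is uniformly bi-Lipschitz on $\overline{B(v,\rho_*)}$ reduce the problem to the plane. Teichm\"uller's extremal bound then handles large modulus, and the trivial estimate $\diam(U)\le\diam(V)$ handles bounded modulus.

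A few points you could tighten:
\begin{enumerate}
\item With the normalization $\Mod\{1<|z|<a\}=\log a$, the Teichm\"uller ring satisfies $\Mod\bigl(\CC\setminus([-1,0]\cup[P,+\infty])\bigr)<\log\bigl(16(P+1)\bigr)$ with precisely the constant $16$. Your hedge about an additive constant is therefore unnecessary.
\item By the triangle inequality, the antipode $v^*$ is at distance at least $\diam(\CC)-\rho_*$ from $\overline{B(v,\rho_*)}$, not just $\diam(\CC)/2-\rho_*$. So your chart argument works for any $\rho_*<\diam(\CC)$, and the hypothesis $\rho_*<\diam(\CC)/2$ is sufficient rather than ``exactly'' what is used.
\item The bi-Lipschitz claim deserves one line. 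For example, use the chordal distance $2|z-w|/\sqrt{(1+|z|^2)(1+|w|^2)}$, which is comparable to $|z-w|$ on every bounded disk and to the geodesic spherical distance everywhere.
\item You should say explicitly that $\overline U$ lies in the closed complementary component of the annulus that contains $U$. This is why $u_1,u_2\in\overline U$ are separated from $w\in\partial V\subset\CC\setminus V$ and from $\infty$.
\end{enumerate}
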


\begin{lemm}\label{lem:small-pullbacks}
For every $h>0$ there is $u_0>0$ such that the following holds. For each $c\in\CJ$, each $n \geq 0$ and each pull-back $W$ of $\tB(c,u_0)$ under $f^n$, we have $\diam(W) < h$. For this fixed $u_0$, let $W$ range over all pull-backs of
$\tB(c,u_0)$ under $f^n$; then
\[
 \sup_{c\in\CJ}\;
 \sup_W
 \diam(W)\longrightarrow0
 \ \text{as }n\to\infty.
\]
\end{lemm}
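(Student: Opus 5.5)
The plan is to derive both assertions from Definition~\ref{def:bc} and the Polynomial Shrinking Condition, Proposition~\ref{prop:rls-input}(1). For the first assertion, fix $h>0$ and take $r=2$ in the backward contraction property, with $\delta_0=\delta_0(2)$. For $c\in\CJ$, $n\ge0$, and $\delta<\delta_0$, let $W$ be a pull-back of $\tB(c,\delta)$ under $f^n$. Then $W$ is contained in a pull-back $W'$ of $\tB(c,2\delta)$ under $f^n$. If $\dist(W',f(\Crit(f)))\le\delta$, backward contraction gives $\diam(W)\le\diam(W')<\delta$.

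The difficulty is the opposite case, where $W'$ stays at distance more than $\delta$ from the critical values. I would treat it as follows.

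\textbf{Case $n=0$.} Here $W'=\tB(c,2\delta)$ contains $c$, so $f(W')$ contains $f(c)$. Hence $W$ is simply $\tB(c,\delta)$, and \eqref{eq:local-degree} gives $\diam(W)\le 2a_1\delta^{1/\ell_c}$. This is small once $\delta$ is small.

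\textbf{Case $n\ge1$, far from critical values.} I plan to use the Polynomial Shrinking Condition. Choose $\delta$ so small that $\tB(c,\delta)\subset B(c,a_1\delta^{1/\ell_c})\subset B(c,\rho_0)$ for every $c\in\CJ$; this uses \eqref{eq:local-degree}. Then every pull-back $W$ of $\tB(c,\delta)$ under $f^n$ lies in a pull-back of $B(c,\rho_0)$ under $f^n$, where $c\in J(f)$. Proposition~\ref{prop:rls-input}(1) with $b=1$ then gives
\[
\diam(W)\le C_1 n^{-1}.
\]
This already yields the second assertion for this $u_0$: the supremum over $c$ and over $W$ is at most $C_1/n\to0$.

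\textbf{Small $n$.} For the first assertion it remains to control $1\le n<N$, where $N$ is chosen with $C_1/N<h$. There are finitely many iterates, and $f^n$ is uniformly continuous with finite degree, so pull-backs of small sets are small. More precisely, for each fixed $n$, every component of $f^{-n}(U)$ has diameter tending to $0$ as $\diam(U)\to0$. I would justify this by compactness: a component of $f^{-n}(B(z,\epsilon))$ lies in the connected component of $f^{-n}(\overline{B(z,\epsilon)})$. As $\epsilon\to0$, these components shrink uniformly to points of $f^{-n}(z)$, because $f^{-n}(z)$ is finite, and the uniformity in $z$ follows from a standard compactness argument on $\CC$. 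Since $\diam\tB(c,u)\le 2a_1u^{1/\ell_c}\to0$, we can choose $u_0\le\delta$ so small that all pull-backs under $f^n$ with $n<N$ have diameter less than $h$. Case $n\ge N$ is handled by the shrinking bound, and the backward contraction case is not even needed.

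\textbf{Main obstacle.} The main obstacle is the uniform-in-$z$ statement for finitely many iterates. If it becomes delicate, I would argue by contradiction instead. Suppose there are components $W_k$ of $f^{-n}(B(z_k,\epsilon_k))$ with $\epsilon_k\to0$ and $\diam W_k\ge h$. Pass to a Hausdorff-limit continuum $W_\infty$. Then $f^n(W_\infty)=\{z_\infty\}$ is a single point, which forces the nondegenerate continuum $W_\infty$ into the finite set $f^{-n}(z_\infty)$, a contradiction.
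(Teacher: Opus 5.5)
Your proposal is correct and follows essentially the same route as the paper: the Polynomial Shrinking Condition with $b=1$ handles $n\ge N$ and gives the $C_1/n$ decay, and a Hausdorff-limit argument (a nondegenerate limit continuum would have to lie in a finite fibre of $f^n$) handles the finitely many $n<N$. The opening backward-contraction case split is, as you yourself note, unnecessary, and your compactness argument with varying centres $z_k$ proves more than needed, since $\CJ$ is finite and the paper simply keeps the centre $c$ fixed.
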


\begin{proof}
Apply Proposition~\ref{prop:rls-input} (1) with $b=1$, and let
$\rho_0,C_1>0$ be the constants given there.
By \eqref{eq:local-degree}, there exists $u_*>0$ such that, for every
$0<u\le u_*$ and every $c\in\CJ$, we have
$\tB(c,u)\subset B(c,\rho_0)$.
Choose an integer $N\ge1$ such that $C_1/N<h$.
Every pull-back $W$ of $\tB(c,u)$ under $f^n$, with $n\ge N$,
is contained in a pull-back of $B(c,\rho_0)$ under $f^n$ and hence
satisfies $\diam(W)\le C_1/n\le C_1/N<h$.

It remains to treat the finitely many times $0\le j<N$.
For each $c\in\CJ$ and each $0\le j<N$, we claim that
\[
 \sup_W\diam(W)\longrightarrow0
 \ \text{as }r\to0,
\]
where $W$ ranges over all pull-backs of $B(c,r)$ by $f^j$.

Otherwise, there exist $\epsilon_0>0$ and a sequence $r_k\to0$,
and pull-backs $W_k$ of $B(c,r_k)$ by $f^j$ such that
$\diam(W_k)\ge\epsilon_0$ for every $k$.
Passing to a subsequence, the compact connected sets
$\overline{W_k}$ converge in the Hausdorff metric to a compact
connected set $F$ with $\diam(F)\ge\epsilon_0$.
By continuity of $f^j$, we have $f^j(F)=\{c\}$, so
$F\subset f^{-j}(c)$. Since $f^{-j}(c)$ is finite and $F$ is
connected, $F$ must be a singleton, contradicting
$\diam(F)\ge\epsilon_0$.

Since there are only finitely many choices of $c$ and $j<N$,
we can choose a common $r>0$ such that all these components
have diameter less than $h$. Choose $u_0\in(0,u_*]$ sufficiently small
that $\tB(c,u_0)\subset B(c,r)$ for every
$c\in\CJ$. This proves the first statement.
The final statement follows from the fact that
$\diam(W)\le C_1n^{-1}$ for $n\ge1$.
\end{proof}

By Lemma~\ref{lem:small-pullbacks}, taking $u$ sufficiently small in
\eqref{eq:nice-sandwich} ensures that every pull-back of
$\hV^c$, $c\in\CJ$, has diameter less than $\rho_*$.

\subsection{Geometry of pull-backs}

The following lemma uses backward contraction to obtain a
univalent inverse branch at the first entry into a neighbourhood of $\CJ$.

\begin{lemm}\label{lem:first-entry}
Let $R>4$ and $u>0$ be small, and let $(\hV,V)$ be a nice couple
satisfying \eqref{eq:nice-sandwich}.
Let $0<d<u$ be such that $\tB(\tilde c,d)\cap J(f)=\varnothing$
for every $\tilde c\in\Crit(f)\setminus J(f)$.

Assume that, for each $c\in\CJ$, $m\ge0$, and each pull-back
$Q$ of $\tB(c,Ru)$ by $f^m$,
\begin{equation}\label{eq:two-scales}
 \dist(Q,f(\Crit(f)))\le d \ \Longrightarrow \ \diam(Q)<d.
\end{equation}
If $w\in J(f)$, $t\ge0$, and $c\in\CJ$ satisfy
\[
 f^t(w)\in\tB(c,d), \ 
 f^k(w)\notin\tB(\CJ,d) \mbox{ for } 0\le k<t,
\]
then $f^t:P\to\hV^c$ is univalent, where $P$ is
the connected component of $f^{-t}(\hV^c)$ containing $w$.
\end{lemm}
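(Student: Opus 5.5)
The plan is to argue by contradiction, using the Riemann--Hurwitz principle for pull-backs of discs. Write $P_k=f^k(P)$ for $0\le k\le t$. Then $P_k$ is the component of $f^{-(t-k)}(\hV^c)$ containing $f^k(w)$, and $P_t=\hV^c$. Each $P_k$ is a pull-back of the topological disc $\hV^c$, so it is simply connected. The map $f:P_k\to P_{k+1}$ is proper. By Riemann--Hurwitz it is univalent exactly when $P_k$ contains no critical point of $f$. Hence it suffices to show that $P_k\cap\Crit(f)=\varnothing$ for $0\le k<t$. Suppose not, and take the largest $k<t$ such that $P_k$ contains a critical point $c'$.

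\textbf{Case 1: $c'\notin J(f)$.} Condition (1) of a nice set says that $\overline{\hV}$ is disjoint from the forward orbits of the critical points outside $J(f)$. Since $f^{t-k}(c')\in\hV^c$, this is impossible.

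\textbf{Case 2: $c'\in\CJ$.} By maximality of $k$, the map $f^{t-k-1}:P_{k+1}\to\hV^c$ is univalent. The set $P_{k+1}$ contains $f(c')\in f(\Crit(f))$, so $\dist(P_{k+1},f(\Crit(f)))=0\le d$. By \eqref{eq:nice-sandwich}, $P_{k+1}$ is contained in a pull-back $Q$ of $\tB(c,Ru)$ by $f^{t-k-1}$. Hypothesis \eqref{eq:two-scales} then gives $\diam(P_{k+1})\le\diam(Q)<d$.

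Now I want to place $f^{k+1}(w)$ inside $B(f(c'),d)$. It lies in $P_{k+1}$, and $P_{k+1}$ contains $f(c')$ and has diameter less than $d$. So $d(f^{k+1}(w),f(c'))<d$, that is, $f^{k+1}(w)\in B(f(c'),d)$.

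Next I pass from $f^{k+1}(w)$ back to $f^k(w)$. The point $f^k(w)$ lies in $P_k$, and $P_k$ is the component of $f^{-1}(P_{k+1})$ containing $c'$. Also $P_{k+1}\subset B(f(c'),d)$. Since $P_k$ is connected, contains $c'$, and maps into $B(f(c'),d)$, it lies in the component of $f^{-1}(B(f(c'),d))$ containing $c'$, which is $\tB(c',d)$. Thus $f^k(w)\in\tB(c',d)\subset\tB(\CJ,d)$. This contradicts the first-entry hypothesis, because $k<t$.

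This disposes of both cases, so $f^t:P\to\hV^c$ is univalent, as claimed. The hypothesis on $\tB(\tilde c,d)$ for non-Julia critical points is not needed if nice-set condition (1) is used as in Case 1. If that use were inadequate, the fallback is the same argument as Case 2: it puts $f^k(w)\in J(f)$ inside $\tB(c',d)$, which is empty of Julia points by hypothesis.

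The main obstacle is Case 2, in two places. First, the inclusion $\hV^c\subset\tB(c,Ru)$ must hold, so that \eqref{eq:two-scales} applies to a pull-back containing $P_{k+1}$. Second, the pull-back $P_k$ must lie inside $\tB(c',d)$. This requires care that the component of $f^{-1}(P_{k+1})$ through $c'$ is contained in the component of $f^{-1}(B(f(c'),d))$ through $c'$, which holds by connectedness.
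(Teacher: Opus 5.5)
Your proof is correct and is essentially the paper's argument: a critical point $c'\in P_k$ puts $f(c')$ in a pull-back $Q\supset P_{k+1}$ of $\tB(c,Ru)$, so \eqref{eq:two-scales} gives $P_{k+1}\subset B(f(c'),d)$, hence $P_k\subset\tB(c',d)$, contradicting first entry. Three small remarks:
\begin{itemize}
\item The maximality of $k$ is not needed.
\item For $c'\notin J(f)$, your use of nice-set condition (1) for $\hV$ is a valid alternative to the paper's use of the hypothesis on $d$, and it shows that hypothesis is redundant.
\item Pull-backs of a disc are not simply connected in general. Along a chain of pull-backs containing no critical points, however, simple connectivity follows by downward induction from $P_t=\hV^c$, and that is all your final step uses.
\end{itemize}
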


\begin{proof}
For $t=0$ there is nothing to prove. For each $0\le k\le t$,
let $P_k$ be the connected component of
$f^{-(t-k)}(\hV^c)$ containing $f^k(w)$.
Then $P_t=\hV^c$, $P_0=P$, and $f(P_k)=P_{k+1}$ for $0\le k<t$.
Suppose that $c'\in\Crit(f)\cap P_k$ for some $k<t$.
Then $f(c')\in P_{k+1}$. Let $Q$ be the pull-back
of $\tB(c,Ru)$ by $f^{t-k-1}$ containing $P_{k+1}$. Since $Q$
meets $f(\Crit(f))$, \eqref{eq:two-scales} gives $\diam(Q)<d$.
In particular $P_{k+1}\subset B(f(c'),d)$, and hence
\[
 P_k\subset\tB(c',d).
\]
If $c'\in\CJ$, this contradicts $f^k(w)\notin\tB(\CJ,d)$. If $c'\notin J(f)$, the additional assumption on $d$ contradicts
$f^k(w)\in P_k\cap J(f)$. Thus no $P_k$, $k<t$, contains a critical point. This finishes the proof.
\end{proof}

Fix $R>4$ and $u>0$ small, let $(\hV,V)$ be a nice couple
satisfying \eqref{eq:nice-sandwich}.
For each $c\in\CJ$ and $n\ge0$, consider all inverse branches
$\phi$ of $f^n$ on $\hV^c$. Associate to each such branch
\begin{equation}\label{eq:critical-parent}
 W=\phi(V^c), \  p_W=\phi(c), \  D_W=\diam(W).
\end{equation}
Let $\cW$ be the collection of all domains $W = \phi(V^c)$, where
$c\in\CJ$, $n\ge0$, and $\phi$ ranges over all inverse branches
of $f^n$ on $\hV^c$. For $n=0$, the inverse branch is the identity,
so $V^c\in\cW$ for every $c\in\CJ$. The family $\cW$ is countable. Each $W\in\cW$ has a unique pull-back time $n$, with
$f^n(W)=V^c$. Hence $p_W=(f^n|_W)^{-1}(c)$ is well defined.

\begin{lemm}\label{lem:critical-geometry}
There exist $\Lambda>0$, $R>4$, $\hat u>0$, $0<\xi<1$,
$C\ge1$, and an integer $N_b\ge1$ with the following property.
For each $u\in(0,\hat u)$ and each nice couple $(\hV,V)$
satisfying \eqref{eq:nice-sandwich} and such that $V$ is $\Lambda$-nice,
the following estimates hold for every $W=\phi(V^c)\in\cW$
with $c\in\CJ$ and every $0<d<u$:
\begin{equation}\label{eq:critical-inner}
 B(p_W,\xi D_W)\subset W,
\end{equation}
\begin{equation}\label{eq:critical-hole}
 \phi(\tB(c,d))\subset
 B\bigl(p_W,C(d/u)^{1/\ell}D_W\bigr).
\end{equation}
If $W',W\in\cW$ and $W'\subsetneq W$, then
\begin{equation}\label{eq:nested-contraction}
 \diam(W')\le\tfrac12\diam(W).
\end{equation}
Moreover, for each $x\in J(f)$, each $\rho>0$, $0<r\le\rho$ and
$0<\eta\le1$, there are at most $N_b$
domains $W\in\cW$ satisfying
\begin{equation}\label{eq:critical-band}
 \rho\le D_W<2\rho, \ 
 \overline B_J(p_W,\eta D_W)\cap \overline B_J(x,r)\ne\varnothing.
\end{equation}
\end{lemm}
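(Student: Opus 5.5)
The plan is to derive all four estimates from the Koebe distortion theorem applied to the univalent branches $\phi:\hV^c\to\CC$, using the modulus of the annulus $\hV^c\setminus\overline{V^c}$. Lemma~\ref{lem:modulus-diameter} converts this modulus into diameter contraction. Conformal invariance of modulus and the pure pull-back structure of nice couples handle nesting.

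\textbf{Choice of $R$.} By \eqref{eq:local-degree} and \eqref{eq:nice-sandwich}, $V^c\subset B(c,a_1(2u)^{1/\ell_c})$ and $\hV^c\supset B(c,a_0(Ru/2)^{1/\ell_c})$. Hence $\Mod(\hV^c;V^c)\ge\ell^{-1}\log(R/4)+\log(a_0/a_1)$. We choose $R$ large, which is allowed by the remark after Proposition~\ref{prop:rls-input}, so that this modulus is at least some $m_0>0$. Since $\phi$ is univalent on $\hV^c$, the Koebe distortion theorem on the disk $\hV^c$, after uniformizing $\hV^c$ by a Riemann map sending $c$ to $0$, gives two facts. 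First, $\phi(V^c)$ is a bounded-distortion image of a set squeezed between round disks about $c$ at comparable scales. Second, the distortion depends only on $m_0$. Smallness of all pull-backs (Lemma~\ref{lem:small-pullbacks}, $\diam<\rho_*$) lets us treat spherical and Euclidean geometry as comparable.

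\textbf{Estimates \eqref{eq:critical-inner} and \eqref{eq:critical-hole}.} Since $B(c,a_0u^{1/\ell_c})\subset V^c$ and $V^c\subset B(c,a_1(2u)^{1/\ell_c})$, the set $V^c$ contains a round disk about $c$ comparable to its diameter. Koebe's $1/4$ theorem and the distortion bound then transfer this to $B(p_W,\xi D_W)\subset W$, which is \eqref{eq:critical-inner}. For \eqref{eq:critical-hole}, we have $\tB(c,d)\subset B(c,a_1d^{1/\ell_c})$, and the radius ratio relative to $V^c$ is $\lesssim(d/u)^{1/\ell_c}\le(d/u)^{1/\ell}$ because $d<u$. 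Koebe's growth bound then gives $\diam\phi(\tB(c,d))\le C(d/u)^{1/\ell}D_W$, with $C$ independent of $u$, $n$ and $\phi$.

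\textbf{Nesting \eqref{eq:nested-contraction}.} Suppose $W'\subsetneq W$ with times $n'$ and $n$. Nested pull-backs of a nice set force $n'>n$. Then $f^n(W')$ is a pull-back of $V^{c'}$ strictly inside $V^c$, and hence lies inside a return domain $Q$ of $V$. Since $\phi$ is univalent on $\hV^c\supset V^c$, the annulus $V^c\setminus\overline Q$, of modulus at least $\Lambda$, pulls back conformally to an annulus in $W$ separating $W'$. Lemma~\ref{lem:modulus-diameter} then gives $\diam W'\le C_0e^{-\Lambda}\diam W$. We choose $\Lambda$ with $C_0e^{-\Lambda}\le1/2$, and set $\hat u$ so that \eqref{eq:nice-sandwich} and Lemma~\ref{lem:small-pullbacks} apply.

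\textbf{Counting \eqref{eq:critical-band}.} I expect this to be the main obstacle. Every $W$ satisfying \eqref{eq:critical-band} lies in $B(x,5\rho)$, has diameter at least $\rho$, and by \eqref{eq:critical-inner} contains a disk of radius $\xi\rho$. Elements of $\cW$ are pull-backs of the nice set $V$, so any two are disjoint or nested. In a nested chain within the band, \eqref{eq:nested-contraction} allows at most two members, since the diameter ratio is less than $2$. Hence the family splits into at most two layers of pairwise disjoint sets. In each layer, area (equivalently, the doubling property of the sphere at small scales) bounds the count by $O(\xi^{-2})$, giving $N_b$. The delicate point is to check that every ambiguity about which branch defines a domain is resolved by the unique pull-back time, so that distinct elements of $\cW$ really are disjoint or strictly nested.
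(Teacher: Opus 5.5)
Your proposal is correct and follows essentially the same route as the paper. It uses Koebe distortion for the univalent branch $\phi$ on a definite collar of $V^c$ inside $\hV^c$; the paper applies the spherical Koebe estimate directly on the round ball $B(c,4a_1(2u)^{1/\ell_c})\subset\hV^c$ instead of passing through a Riemann map. It then uses the return-domain modulus with Lemma~\ref{lem:modulus-diameter} for \eqref{eq:nested-contraction}, and an area count for \eqref{eq:critical-band}.

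One small simplification: two band members have diameter ratio less than $2$, while \eqref{eq:nested-contraction} forces ratio at least $2$ for nested ones. So no two band members are nested, and they are already pairwise disjoint; your two-layer splitting is unnecessary, though harmless.
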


\begin{proof}

Let $a_0,a_1>0$ be the constants in \eqref{eq:local-degree}.
Let $C_0$ be the constant in Lemma~\ref{lem:modulus-diameter}, and choose
$\Lambda>0$ such that $C_0e^{-\Lambda}\le1/2$.
Choose $R$ large enough as in Proposition~\ref{prop:rls-input} (2) so that
\[
 4a_1 2^{1/\ell_c}\le a_0(R/2)^{1/\ell_c}
 \ \text{for every }c\in\CJ.
\]
Then, for each $c\in\CJ$ and all sufficiently small $u$,
\[
 B(c,a_0u^{1/\ell_c})\subset V^c\subset B(c,a_1(2u)^{1/\ell_c}), \ 
 B(c,4a_1(2u)^{1/\ell_c})\subset\hV^c.
\]
Let $r_{\mathrm K}$ be the constant in
Proposition~\ref{prop:spherical-koebe}. Choose $\hat u>0$ sufficiently
small so that, for every $0<u<\hat u$ and every $c\in\CJ$,
\[
 2a_1(2u)^{1/\ell_c}<r_{\mathrm K},
\]
and every pull-back of $\hV^c$ has diameter less than $\rho_*$.

Apply Proposition~\ref{prop:spherical-koebe} with $v=c$ and
$r=2a_1(2u)^{1/\ell_c}$ to the restriction of $\phi$ to
$B(c,2r)\subset\hV^c$. By \eqref{eq:spherical-koebe-balls}, there exists
a constant $K_0\ge1$, independent of $u,c,n,\phi$, such that
\[
 K_0^{-1}|D\phi(c)|u^{1/\ell_c}
 \le D_W\le K_0|D\phi(c)|u^{1/\ell_c}.
\]
Increasing $K_0$ if necessary, we also have
\[
 B\!\left(p_W,K_0^{-1}|D\phi(c)|u^{1/\ell_c}\right)
 \subset\phi\!\left(B(c,a_0u^{1/\ell_c})\right)\subset W.
\]
Taking
$\xi=1/(2K_0^2)$ gives
\[
 B(p_W,\xi D_W)\subset W.
\]
This proves \eqref{eq:critical-inner}. For each $0<d<u$ and
each $z\in\tB(c,d)$, the same estimates give
\[
 d\bigl(\phi(z),\phi(c)\bigr)
 \le K_1|D\phi(c)|d^{1/\ell_c},
\]
where $K_1$ is independent of $u,d,c,n,\phi$.
Since $0<d/u<1$ and $\ell_c\le\ell$,
\[
 (d/u)^{1/\ell_c}\le(d/u)^{1/\ell}.
\]
Thus \eqref{eq:critical-hole} holds with $C=\max\{1,K_0K_1\}$.

We next prove \eqref{eq:nested-contraction}. Let
$W=\phi(V^c)$ and $W'=\phi'(V^{c'})\subsetneq W$, with
pull-back times $n < m$, resp. The domain $f^n(W')$ is a positive-time pull-back of $V$
contained in $V^c$. Therefore it is contained in a return
domain $Q\subset V^c$. Since $V$ is $\Lambda$-nice and
$f^n:W\to V^c$ is univalent, conformal invariance and
monotonicity of modulus give
\[
 \Mod(W;W')=\Mod(V^c;f^n(W'))\ge\Mod(V^c;Q)\ge\Lambda.
\]
By Lemma~\ref{lem:modulus-diameter} we have
\[
 \diam(W')\le C_0\exp\bigl(-\Mod(W;W')\bigr)\diam(W)
 \le C_0e^{-\Lambda}\diam(W)\le\tfrac12\diam(W),
\]
provided $\diam(W)<\rho_*<\diam(\CC)/2$ when $u$ is small.

Finally, we prove the uniform bound on the number of domains
satisfying \eqref{eq:critical-band}.
Distinct domains satisfying \eqref{eq:critical-band} are disjoint:
if one contained the other, \eqref{eq:nested-contraction} would
contradict $\rho\le D_W<2\rho$.
For each $W$ satisfying \eqref{eq:critical-band}, we have
$d(p_W,x)\le r+\eta D_W<3\rho$. By
\eqref{eq:critical-inner}, the spherical balls $B(p_W,\xi\rho)$
are pairwise disjoint. Since $\xi<1$, all these
balls lie in $B(x,4\rho)$. An easy area argument shows that the number of such domains is at most
\[
 \frac{\operatorname{Area}(B(x,4\rho))}
 {\operatorname{Area}(B(x,\xi\rho))}
 \le\left(\frac{4\rho}{\xi\rho}\right)^2
 =\frac{16}{\xi^2}.
\]
Thus we may take $N_b=\lceil16/\xi^2\rceil$, independently of
$u,\rho,r,\eta$.
\end{proof}

\subsection{Proof of Proposition~\ref{prop:critical-winning}}
\begin{proof}
By Proposition~\ref{prop:conversion}, it suffices to prove that
$\Ecrit$ is $0$-potential winning on $J(f)$.
Fix $s>0$ and $\beta\in(0,1)$, and put
\[
 N_\beta=2+\left\lceil\log_2(1/\beta)\right\rceil.
\]
Choose the constants in Lemma~\ref{lem:critical-geometry}.
For each $j\ge1$, choose $\eta_j\in(0,1)$ and
$\tau_j\in(0,1)$ such that
\begin{equation}\label{eq:critical-budget-choice}
 N_bN_\beta(\eta_j/\beta)^s\le2^{-(j+1)}\beta^s,
  \  C\tau_j^{1/\ell}\le\eta_j.
\end{equation}
For each $j \geq 1$, choose $u_j\in(0,\hat u)$ sufficiently small and
set $d_j=\tau_j u_j$. Let $(\hV_j,V_j)$ be the nice couple
given by Proposition~\ref{prop:rls-input} (2) for the chosen
$\Lambda,R$ at scale $u=u_j$. Thus $V_j$ is $\Lambda$-nice and
\eqref{eq:nice-sandwich} holds with $u=u_j$.
Furthermore, the choice of $u_j$ should guarantee the following:

\begin{enumerate}
\item $d_j<\delta_0(R/\tau_j)$, so that
$BC(R/\tau_j)$ gives \eqref{eq:two-scales}
with $\delta=d_j$;
\item every pull-back of every component of $\hV_j$ has diameter
less than $\min\{\rho_*,1/j\}$;
\item $\tB(c,d_j)$ is disjoint from $J(f)$ for each
$c\in\Crit(f)\setminus J(f)$.
\end{enumerate}
Condition~(2) follows
from Lemma~\ref{lem:small-pullbacks}. Let $\cW_j$ be the collection
in \eqref{eq:critical-parent}
for $(\hV_j,V_j)$. By \eqref{eq:critical-hole}, for each
associated inverse branch,
\begin{equation}\label{eq:critical-deletion}
 \phi(\tB(c,d_j))\cap J(f)\subset \overline B_J(p_W,\eta_jD_W).
\end{equation}

Bob first chooses ball $B_0=\overline B_J(b_0,r_0)$. Since $s>0$,
\eqref{eq:nonconical} implies that
$\mathcal H^s(J(f)\setminus\Jcon(f))=0$.
By the definition of Hausdorff measure~\cite{Falconer},
Alice can cover
$(J(f)\setminus\Jcon(f))\cap B_0$ with balls centred in $J(f)$
whose total $s$-power radius is at most
$\tfrac12(\beta r_0)^s$. These balls are her first choice.

At each turn $i\ge1$, after Bob chooses $B_i=\overline B_J(b_i,r_i)$,
Alice chooses the balls $\overline B_J(p_W,\eta_jD_W)$ corresponding
to all $j\ge1$ and all $W\in\cW_j$ satisfying
\begin{equation}\label{eq:critical-crossing}
 r_i\le D_W<r_{i-1}, \ \overline B_J(p_W,\eta_jD_W)\cap B_i\ne\varnothing.
\end{equation}
Since $r_i\ge\beta r_{i-1}$,
\[
 [r_i,r_{i-1})\subset[r_i,r_i/\beta)
 \subset\bigcup_{q=0}^{N_\beta-1}[2^qr_i,2^{q+1}r_i).
\]
For each fixed $j$ and $q$, apply
Lemma~\ref{lem:critical-geometry} with $x=b_i$, $r=r_i$,
$\rho=2^qr_i$, and $\eta=\eta_j$.
It gives at most $N_b$ domains $W\in\cW_j$ satisfying
$D_W\in[2^qr_i,2^{q+1}r_i)$ and
$\overline B_J(p_W,\eta_jD_W)\cap B_i\ne\varnothing$.
Thus Alice chooses at most $N_bN_\beta$ balls for each $j$.
Therefore the total sum of the $s$-th powers of the radii of Alice's choice is bounded by
\[
 \sum_{j\ge1}N_bN_\beta(\eta_j/\beta)^sr_i^s
 \le\tfrac12(\beta r_i)^s.
\]
Thus Alice's choice satisfies~\eqref{eq:budget}.

Suppose $r_i\to0$, and let $x_\infty\in\bigcap_{i\ge0}B_i$
lie outside every ball chosen by Alice during the whole game.
In all other cases, Alice wins by default.
Since the balls chosen on the first turn cover
$(J(f)\setminus\Jcon(f))\cap B_0$, we have
$x_\infty\in\Jcon(f)$.
If $x_\infty\in\Ecrit$, then Alice wins.
Otherwise, suppose that $x_\infty\notin\Ecrit$. Note that $f^q(y)\notin\Crit(f)$ for every $y\in\Jcon(f)$
and every $q\ge0$.
Since $x_\infty\notin\Ecrit$, we have
\[
 \inf_{n\ge N}\dist(f^n(x_\infty),\CJ)=0
 \ \text{for every }N\ge0.
\]
Consequently, the orbit of $x_\infty$ visits every neighbourhood
of $\CJ$ infinitely many times.

Choose $\rho>0$ and a subsequence $\{n_k\}$ with $n_k\to\infty$
such that, for each $k$, there is a univalent inverse branch
\[
 \psi_{n_k}:B(f^{n_k}(x_\infty),\rho)\longrightarrow\CC,
  \  \psi_{n_k}(f^{n_k}(x_\infty))=x_\infty.
\]
Fix $j$ with $1/j<\rho$. For each $k$, let
\[
 t_k=\min\{m\ge0:f^{n_k+m}(x_\infty)\in\tB(\CJ,d_j)\},
\]
and choose $c_k\in\CJ$ with
$f^{n_k+t_k}(x_\infty)\in\tB(c_k,d_j)$.
We apply Lemma~\ref{lem:first-entry} with $(u,d)=(u_j,d_j)$,
$(\hV,V)=(\hV_j,V_j)$, $w=f^{n_k}(x_\infty)$, $t=t_k$, and $c=c_k$.
Here $0<d_j<u_j$, the nice couple satisfies \eqref{eq:nice-sandwich},
and conditions~(1) and~(3) above give the two additional hypotheses.
The orbit conditions hold by the definition of $t_k$ and the choice of $c_k$.
Thus the lemma gives a univalent inverse branch
\[
 \theta_k:\hV_j^{c_k}\longrightarrow P_k,
  \  \theta_k(f^{n_k+t_k}(x_\infty))=f^{n_k}(x_\infty),
\]
where $P_k$ is the pull-back of $\hV_j^{c_k}$ by $f^{t_k}$
containing $f^{n_k}(x_\infty)$.
By the choice of $u_j$, $\diam(P_k)<1/j<\rho$. Since $P_k$ contains
$f^{n_k}(x_\infty)$, it lies in $B(f^{n_k}(x_\infty),\rho)$.
Thus
\[
 \phi_k=\psi_{n_k}\circ\theta_k
\]
is a univalent inverse of $f^{n_k+t_k}$ on $\hV_j^{c_k}$.
Put $W_{k,j}=\phi_k(V_j^{c_k})$. Then $W_{k,j}\in\cW_j$ is a pull-back
of $V_j^{c_k}$ under $f^{n_k+t_k}$, and
$x_\infty\in\phi_k(\tB(c_k,d_j))$.

The pull-back times satisfy $n_k+t_k\ge n_k\to\infty$.
Since $j$ is fixed, Lemma~\ref{lem:small-pullbacks} gives
\[
 D_{W_{k,j}}=\diam(W_{k,j})\longrightarrow0
 \ \text{as }k\to\infty.
\]
Fix $k$ sufficiently large that $D_{W_{k,j}}<r_0$.
Since $r_i\to0$, there is a unique index $i\ge1$ with
$r_i\le D_{W_{k,j}}<r_{i-1}$.
By~\eqref{eq:critical-deletion}, we have
\[
 x_\infty\in\phi_k(\tB(c_k,d_j))\cap J(f)
 \subset \overline B_J(p_{W_{k,j}},\eta_jD_{W_{k,j}}).
\]
Thus $x_\infty\in B_i\cap \overline B_J(p_{W_{k,j}},\eta_jD_{W_{k,j}})$.
By~\eqref{eq:critical-crossing}, this ball is one of
the balls chosen by Alice on turn $i$, contradicting our
assumption that $x_\infty$ lies outside every ball chosen by Alice.
This proves that $x_\infty\in\Ecrit$.
Since $s$ and $\beta$ were arbitrary,
Proposition~\ref{prop:conversion} proves that $\Ecrit$ is absolute
winning on $J(f)$.
\end{proof}

\section{Interval maps}\label{sec:interval}

In this section, we prove a similar result for smooth interval maps. Throughout, we consider $C^1$ maps $f:I\to I$, where $I=[0,1]$.
Write
\[
 \Crit(f)=\{c\in I:Df(c)=0\}
\]
for the set of critical points of $f$.
We call $f$ a {\it multimodal map} if $\Crit(f)$
is finite. Let $\sA_{\mathbb R}$ denote the class of multimodal maps
satisfying the following conditions:
\begin{enumerate}
\item $f(\{0,1\})\subset\{0,1\}$ and $\Crit(f)\subset(0,1)$.
\item Every periodic orbit is hyperbolic repelling.
\item The map $f$ is $C^3$ on $I\setminus\Crit(f)$, and every
critical point $c$ is {\it nonflat}: there are $C^3$ local
diffeomorphisms $\phi_c,\psi_c$ and a real number
$\ell_c>1$ such that
\begin{equation}\label{eq:nonflat}
 \phi_c(c)=\psi_c(f(c))=0, \ 
 |\psi_c\circ f(x)|=|\phi_c(x)|^{\ell_c}
\end{equation}
on a neighborhood of $c$. The number $\ell_c$ is called the order
of $c$. We also put 
\[
 \ell=\ell_{\max}(f)=\max_{c\in\Crit(f)}\ell_c.
\]

\item $f$ is {\it topologically exact} on $I$: for every
nonempty relatively open set $U\subset I$, there exists an
integer $n\ge1$ such that $f^n(U)=I$.
\end{enumerate}

The real Julia set $J(f)$ for smooth interval maps is defined as the complement of the basins of periodic attractors, see \cite{MS}. Hence for each $f \in \sA_{\mathbb R}$, we have $J(f) = I$.

All distances, diameters, and derivative norms are taken with respect
to the Euclidean metric. Write $B(x,r)=(x-r,x+r)$ for open balls in
$\mathbb R$ and $\overline B_I(x,r)=I\cap[x-r,x+r]$ for game balls on $I$.
The definitions of expansion away from critical points and backward
contraction given in Section~1 also apply to interval maps, with the
spherical metric replaced by the Euclidean metric. Here, for $c\in\Crit(f)$
and $\delta>0$, $\tB(c,\delta)$ denotes the connected component of
$f^{-1}(B(f(c),\delta)\cap I)$ containing $c$. We also put
\[
 \tB(\Crit(f),\delta)=\bigcup_{c\in\Crit(f)}\tB(c,\delta).
\]
Note that by Ma\~n\'e's theorem~\cite{Man85}, each
$f\in\sA_{\mathbb R}$ is expanding away from critical points.

We spell out the definition. Let $r>1$. We say that $f\in\sA_{\mathbb R}$ is
{\it backward contracting with exponent $r$}, abbreviated $BC(r)$,
if there is $\delta_0=\delta_0(r)>0$ such that, for every
$\delta\in(0,\delta_0)$, every $c\in\Crit(f)$, every $n\ge0$, and
every pull-back $W$ of $\tB(c,r\delta)$ by $f^n$,
\begin{equation}\label{eq:real-bc}
 \dist(W,f(\Crit(f)))\le\delta \ \Longrightarrow \  |W|<\delta.
\end{equation}
The map $f$ is called {\it backward contracting}
if it satisfies $BC(r)$ for every $r>1$. The local normal form gives constants $a_0,a_1>0$ such that
\begin{equation}\label{eq:real-local}
 B(c,a_0u^{1/\ell_c})\subset\tB(c,u)
 \subset B(c,a_1u^{1/\ell_c})
\end{equation}
for every $c\in \Crit(f)$ and all sufficiently small $u>0$.

The following remarkable proposition was proved in~\cite[Theorem~1]{BRLSvS08}.
For the relation between backward contraction, large derivatives,
and summability conditions, see~\cite{LS10,Li14}.

\begin{prop}
Let $f\in\sA_{\mathbb R}$ satisfy
\[
 |Df^n(f(c))|\longrightarrow\infty
 \ \text{as }n\to\infty
 \ \text{for each }c\in\Crit(f).
\]
Then $f$ is backward contracting.
\end{prop}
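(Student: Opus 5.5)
The plan is to argue by contradiction, scale by scale, combining the real Koebe principle with the growth $|Df^n(f(c))|\to\infty$. Fix $r>1$ and suppose $BC(r)$ fails. Then there are $\delta_k\to0$, critical points $c_k,c_k'$, times $n_k$, and pull-backs $W_k$ of $\tB(c_k,r\delta_k)$ by $f^{n_k}$ with
\[
 \dist(W_k,f(c_k'))\le\delta_k, \qquad |W_k|\ge\delta_k .
\]
Passing to a subsequence, we may take $c_k=c$ and $c_k'=c'$ fixed. First I record the standard one-dimensional inputs for $f\in\sA_{\mathbb R}$:
\begin{enumerate}
\item the absence of wandering intervals and of non-repelling cycles (conditions (2) and (4)), which gives $\max_{|J|\le\epsilon}\max_{m\ge0}|f^{-m}\text{-components of }J|\to0$ as $\epsilon\to0$;
\item the real Koebe principle for the $C^3$ nonflat maps (through the local normal form \eqref{eq:nonflat} and cross-ratio distortion bounds);
\item the local description \eqref{eq:real-local} of $\tB(c,u)$.
\end{enumerate}
Input (1) shows that $n_k\to\infty$. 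Since the orbit of $c$ is not periodic (large derivatives rule this out), it also shows that the first time $m_k$ at which the chain $f^j(W_k)$ comes within distance comparable to $\delta_k$ of a critical point tends to infinity.

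The core step is an induction on the number of critical passages along the chain $W_k, f(W_k),\ldots,f^{n_k}(W_k)$. Consider the first return time $m=m_k$ after which the chain meets $\tB(\Crit(f),\cdot)$ at the appropriate scale. On $[0,m)$ the map $f^{m}$ is a diffeomorphism on a slightly larger interval $\widehat W_k\supset W_k$. The point is to guarantee Koebe space for this branch, and this is exactly where the hypothesis is used for a smaller scale: by an inductive hypothesis on $BC$ at the scale $r\delta_k$ (or a larger $r'$), the pull-backs of the enlarged interval $\tB(c'',r'\delta_k)$ touching critical values are small. Hence they contain no critical points, and the branch extends with definite space on both sides. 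Koebe then gives bounded distortion, so
\[
 |f^{m}(W_k)|\gtrsim |Df^{m}(f(c'))|\cdot\delta_k .
\]
Here I compare with the derivative along the critical orbit, using that $W_k$ lies within $O(\delta_k)$ of $f(c')$ and has length at least $\delta_k$.

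Since $m_k\to\infty$, the quantity $|Df^{m_k}(f(c'))|$ is eventually larger than any prescribed constant $\Theta$. By contrast, $f^{m}(W_k)$ is contained in a pull-back of $\tB(c,r\delta_k)$, whose size is controlled by $\delta_k^{1/\ell}$ via \eqref{eq:real-local}. To turn this into a contradiction I plan to pass through a critical point: composing with the local map $x\mapsto|\phi_{c''}(x)|^{\ell_{c''}}$ at the next critical passage converts the length estimate into one at the critical value $f(c'')$ at scale comparable to $\delta_k$ again. Each passage then gains the factor $\Theta$ (from the large derivative) while losing only a bounded factor. Iterating over the finitely many critical passages that are possible before the chain reaches $\tB(c,r\delta_k)$, one obtains
\[
 |f^{n_k}(W_k)|\gg r\delta_k ,
\]
which is impossible because $f^{n_k}(W_k)=\tB(c,r\delta_k)$.

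I expect the main obstacle to be securing the Koebe space uniformly at each diffeomorphic stage without circularity. This requires proving $BC(r)$ simultaneously for all $r$ by an induction on scales, or alternatively extracting a priori real bounds in the spirit of Shen's and Bruin--Rivera-Letelier--Shen--van Strien's arguments, and showing that the number of passages with bounded loss does not accumulate. I would first try to organize the argument around a "first bad scale" $\delta$: take the largest $\delta<\delta_0$ for which the implication fails. Then, for all pull-backs at the larger scales $r'\delta$ with $r'>r$, the implication holds, and this supplies exactly the extension space needed for the Koebe step.
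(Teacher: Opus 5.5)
The paper gives no proof of this statement; it quotes it from \cite[Theorem~1]{BRLSvS08}. Your outline therefore has to stand on its own, and it has a genuine gap at its quantitative core: the accounting at a critical passage.

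Your Koebe step yields only $|f^m(W_k)|\gtrsim|Df^m(f(c'))|\,\delta_k\ge\Theta\delta_k$, with $\Theta$ a fixed constant. This is compatible with the upper bound $\delta_k^{1/\ell}$, so there is no contradiction yet. Passing through $c''$ then does not cost a bounded factor. For any interval $L\subset\tB(c'',M\delta_k)$ one has $|f(L)|\le|L|\sup_{\tB(c'',M\delta_k)}|Df|\lesssim|L|\,(M\delta_k)^{1-1/\ell_{c''}}$. So an interval of length $\asymp\Theta\delta_k$ is mapped to one of length $o(\delta_k)$, and the lower bound does not transfer to a scale comparable to $\delta_k$ at $f(c'')$. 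Consequently no gain accumulates over successive passages.

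The last step is also off by one. By \eqref{eq:real-local}, $|\tB(c,r\delta_k)|\asymp(r\delta_k)^{1/\ell_c}\gg r\delta_k$, so $|f^{n_k}(W_k)|\gg r\delta_k$ is no contradiction. You would have to measure at $f^{n_k+1}(W_k)=B(f(c),r\delta_k)$.

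The missing idea is to apply the hypothesis across the critical passage. Suppose $f^m$ has bounded distortion on an interval containing $W_k\cup\{f(c')\}$ and maps it into $\tB(c'',M\delta_k)$. Then $f^{m+1}(c')\in\tB(c'',M\delta_k)$, so $|Df(f^{m+1}(c'))|\lesssim(M\delta_k)^{1-1/\ell_{c''}}$ and
\[
 |Df^m(f(c'))|=\frac{|Df^{m+1}(f(c'))|}{|Df(f^{m+1}(c'))|}
 \gtrsim\Theta\,(M\delta_k)^{-(1-1/\ell_{c''})}.
\]
Hence $|f^m(W_k)|\gtrsim\Theta M^{-(1-1/\ell_{c''})}\delta_k^{1/\ell_{c''}}$. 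This contradicts $|f^m(W_k)|\le|\tB(c'',M\delta_k)|\lesssim(M\delta_k)^{1/\ell_{c''}}$ once $\Theta\gg M$. A single passage suffices: the small derivative at the entry point exactly absorbs the $\delta^{1/\ell}$ size of the critical neighbourhood.

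What remains is exactly what you flag as the obstacle. You need an extension as above: diffeomorphic, with definite Koebe space, containing $f(c')$, and landing in a critical neighbourhood of scale $O(\delta_k)$. Your devices do not produce it.

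\begin{itemize}
\item A ``first bad scale'' need not exist. Failure of $BC(r)$ only says that bad scales accumulate at $0$. Nothing provides an interval of good scales inside the small range where your estimates hold, so a downward induction on scales has no starting point.
\item The step ``pull-backs of $\tB(c'',r'\delta_k)$ touching critical values are small, hence contain no critical points'' is a \emph{non sequitur}. Suppose the $i$-th pull-back $Q_i$ of the enlarged neighbourhood along the chain contains a critical point $c'''$. Smallness of $Q_{i+1}\ni f(c''')$ only gives $Q_i\subset\tB(c''',O(r'\delta_k))$. That is compatible with $f^i(W_k)$ avoiding $\tB(\Crit(f),\delta_k)$, so the scales do not match, and enlarging the entry scale just moves the problem one level up.
\end{itemize}

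A workable organization is induction on the pull-back time $n$ at a fixed small $\delta$, with $r\ge2$. This is harmless, since $BC(r)$ implies $BC(r')$ for $r'<r$. Short times are excluded because, under the hypothesis, no critical point lies in the forward orbit of a critical value. Suppose some $f^j(W)$ with $j<n$ meets $\tB(c'',\delta)$. The induction hypothesis applied to $f^{j+1}(W)$ gives $f^j(W)\subset\tB(c'',2\delta)$. Applied again to the pull-back of $\tB(c'',r\delta)$ by $f^j$ containing $W$, it gives $|W|<\delta$.

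This leaves the first-entry case. There $f^n\colon W\to\tB(c,r\delta)$ is a diffeomorphism, but the extension beyond $\tB(c,r\delta)$ required above still has to be controlled. That is the nontrivial step, and your sketch leaves it open.
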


For $y\in I$, put
\[
 E_I(f,y)=\{x\in I:\inf_{n\ge0}|f^n(x)-y|>0\}.
\]
We call this the {\it exceptional set} associated to $y$. Our main result for interval maps is the following.

\begin{theoalph}\label{thm:real}
Let $f\in\sA_{\mathbb R}$ be backward contracting.
Then $E_I(f,y)$ is absolute winning on $I$ for every $y\in I$.
\end{theoalph}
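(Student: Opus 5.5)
The proof will follow the two-step scheme used for rational maps: first show that $E_I(f,\Crit(f)):=\{x\in I:\inf_n\dist(f^n(x),\Crit(f))>0\}$ is absolute winning on $I$, then pass to an arbitrary target $y$. The game space $I$ is compact, doubling and uniformly perfect, so by Proposition~\ref{prop:conversion} and \eqref{eq:game-implications} it suffices to prove $(s,\beta)$-potential winning for every $s>0$ and $\beta\in(0,1)$.

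\emph{Reduction step.} I will reproduce Theorem~\ref{thm:reduction} in the interval setting. Ma\~n\'e's theorem gives uniform expansion on each compact forward invariant set $K_m=\{x:\dist(f^n(x),\Crit(f))\ge 1/m\ \forall n\}$. Lemma~\ref{lem:extension} then goes through verbatim: inverse branches of $f^n$ on intervals $B(f^n(x),4\delta)\cap I$ exist because $f$ is a local diffeomorphism near $K_m$. Note that the images may be clipped at $\{0,1\}$, but $f(\{0,1\})\subset\{0,1\}$ makes the endpoints harmless. The Koebe estimates of Proposition~\ref{prop:spherical-koebe} are replaced by bounded distortion. On $K_m$ the map is $C^3$ with $|Df|$ bounded below, so $\log|Df|$ is Lipschitz near $K_m$, and exponential contraction of the branches gives a uniform distortion constant by the usual summation. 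Lemma~\ref{lem:countK} and the proof of Proposition~\ref{prop:expanding} are purely metric and carry over. This yields that $S(K_m,y)$ is absolute winning. Intersecting over $m$ with $E_I(f,\Crit(f))$, exactly as in \eqref{eq:intersection-identity}, gives the theorem once the critical step is established.

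\emph{Critical step.} I will mimic Proposition~\ref{prop:critical-winning}, and I need interval replacements for three inputs in Proposition~\ref{prop:rls-input}.
\begin{enumerate}
\item Polynomial shrinking, or at least uniform shrinking of pull-backs of $\tB(c,u_0)$ as in Lemma~\ref{lem:small-pullbacks}. For backward contracting interval maps this is in~\cite{BRLSvS08}. It can also be obtained from the absence of wandering intervals together with topological exactness.
\item Nice intervals. Here I will take $V^c$ to be symmetric-type intervals bounded by points of periodic orbits, and $\hV^c\supset V^c$ a larger such interval, with comparability to $\tB(c,u)$ and $\tB(c,Ru)$. In place of $\Lambda$-niceness I will use the real Koebe principle, which requires Koebe space: a return domain has definite space inside $V^c$. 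This gives \eqref{eq:nested-contraction} with factor $1/2$ after choosing $R$ large, and bounded distortion of univalent branches $\phi$ on $\hV^c$. The nonflat form \eqref{eq:nonflat} and \eqref{eq:real-local} then give the analogues of \eqref{eq:critical-inner} and \eqref{eq:critical-hole}.
\item Conical points. I must show that the set of points failing to have infinitely many times with a univalent branch of definite size has Hausdorff dimension zero. An alternative that avoids this is to let Alice, on the first turn, handle points whose orbit eventually lands on $\Crit(f)$ (a countable set, hence of zero $\mathcal H^s$-measure). She would then run the argument directly on return times to a fixed nice interval, using that backward contraction gives univalent first-entry branches (Lemma~\ref{lem:first-entry} carries over word for word) with extensions of definite size.
\end{enumerate}
With these in hand, the counting bound \eqref{eq:critical-band} holds with an even simpler length argument, and Alice's strategy with the budgets \eqref{eq:critical-budget-choice} is unchanged.

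The main obstacle is input (3): producing, for points accumulating on $\Crit(f)$, infinitely many times with univalent extensions of a fixed size $\rho$. In one dimension I would try first to use topological exactness and the absence of attractors to get, from backward contraction, that $\HD(I\setminus\Jcon)=0$ as in~\cite{RLS}. Failing that, I would use the first-return structure to the nice couple directly, choosing first-entry branches to $\hV_j$ from return times to $\hV_1$.
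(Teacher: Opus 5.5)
Your overall architecture matches the paper's: the reduction through the sets $K_m$, Ma\~n\'e's theorem and a real version of Proposition~\ref{prop:expanding}, followed by the potential-game argument of Proposition~\ref{prop:critical-winning} for $\Ecrit$. Clipping at $\{0,1\}$ works as well as the paper's extension of $f$ past the endpoints.

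\textbf{The gap is in your item (2).}
\begin{enumerate}
\item Intervals bounded by points of periodic orbits cannot serve as $V$. If $b\in\partial V$ has period $p$, then $f^p(b)=b\in\overline V\subset\hV$, so $(\hV,V)$ is never a nice couple.
\item Worse, $V$ is not $\Lambda$-nice for any $\Lambda>0$. Since $b$ is repelling, every point of $V^c$ close enough to $b$ lands back in $V$ within $2p$ iterates. Hence one return domain $Q$ contains a one-sided neighbourhood of $b$, so $b\in\overline Q$.
\item Your replacement, that ``a return domain has definite space inside $V^c$'', is just $\Lambda$-niceness restated, and it is precisely what has to be proved.
\item Enlarging $R$ only improves the distortion constant $K_0$ of $\phi$ on $V^c$. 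The nested contraction \eqref{eq:real-nested} instead needs $|Q|\le|V^c|/(1+2\Lambda)$ for every return domain, with $\Lambda$ large compared with $K_0$ (the paper takes $\Lambda\ge K_0$).
\item This must hold uniformly in the scale $u$. Uniformity is essential, because $\eta_j$, and with it the counting constant, is fixed before $u_j$ is chosen.
\item Intervals cut out by a periodic orbit also need not satisfy the sandwich \eqref{eq:real-sandwich} at a prescribed scale, since other points of the orbit may come much closer to $c$.
\end{enumerate}
Small return domains at every small scale, together with the sandwich, is where backward contraction really enters. The paper imports it from the real version of \cite[Lemma~3.13]{RLS}; your construction neither proves nor supplies it.

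\textbf{Item (3) is not actually an obstacle.} You leave it open. The paper takes your first option: Alice covers $I\setminus\Jcon(f)$ on the first turn, using the real version of \cite[Corollary~6.3]{RLS}. In fact this input is unnecessary, and your alternative was close. The obstacle only appears because you insist on a fixed nice interval. Alice deletes balls for all $j$ simultaneously, and every $W\in\cW_j$ has $D_W<1/j$. So:
\begin{enumerate}
\item Suppose $x_\infty\notin\Ecrit$, and fix one $j$ with $1/j<r_0$.
\item Let $t\ge0$ be the first time with $f^t(x_\infty)\in\tB(\Crit(f),d_j)$.
\item Apply Lemma~\ref{lem:real-entry} with $w=x_\infty$. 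This yields $W\in\cW_j$ with $x_\infty\in\overline B_I(p_W,\eta_jD_W)$ and $0<D_W<r_0$.
\item This ball was deleted at the turn $i\ge1$ with $r_i\le D_W<r_{i-1}$, a contradiction.
\end{enumerate}
No conical times, no composition with univalent branches of a fixed size, and no separate treatment of precritical points are needed. The one ingredient you genuinely lack is the uniformly $\Lambda$-nice couple of item (2).
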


For $Y\subset I$, define
\[
 E_I(f,Y)=\{x\in I:\overline{\mathcal O_f^+(x)}\cap Y=\varnothing\}
 =\bigcap_{y\in Y}E_I(f,y).
\]
Theorem~\ref{thm:real} and the countable intersection property imply
that $E_I(f,Y)$ is absolute winning on $I$ for every countable set
$Y\subset I$.

The game definitions and Propositions~\ref{prop:potential-properties}
and~\ref{prop:conversion} apply to every compact interval with its
Euclidean metric.

\subsection{Real pull-back estimates}

Mimicking the proof of Proposition~\ref{prop:expanding}, we obtain
the following result.

\begin{prop}\label{prop:real-expanding}
Let $f\in\sA_{\mathbb R}$, and let $K\subset I$ be compact and
forward invariant. Suppose that $f$ is uniformly expanding on $K$.
Then, for each $y\in I$,
\begin{equation}\label{eq:real-conditional-set}
 S(K,y)=(I\setminus K)\cup E_I(f,y)
\end{equation}
is absolute winning on $I$.
\end{prop}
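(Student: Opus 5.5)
The plan is to transplant the proof of Proposition~\ref{prop:expanding} to the interval, replacing the spherical Koebe estimates by real Koebe-type distortion bounds for $C^3$ maps. First, assume $y\in K$; otherwise $S(K,y)=I$ by compactness and forward invariance. Then prove a real analogue of Lemma~\ref{lem:extension}. Choose $N$ with $\inf_K|Df^N|>4$, a neighbourhood $O$ of $K$ in $I$ on which $|Df^N|>2$ and $\min_{r<N}|Df^r|>m$, and conclude $|Df^n(z)|\ge A\lambda^n$ whenever $z,\dots,f^{n-1}(z)\in O$. Since $K$ avoids $\Crit(f)$, $O$ can be taken to be a finite union of intervals disjoint from a neighbourhood of $\Crit(f)$. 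We then obtain $\delta>0$ such that, for $x\in K$, the branch of $f^{-n}$ along the orbit of $x$ is a monotone diffeomorphism defined on $B(f^n(x),4\delta)\cap I$ with iterates staying in $O$. Near the endpoints $0,1$ we work with one-sided neighbourhoods, using $f(\{0,1\})\subset\{0,1\}$. Define $\cG_n$ as before; \eqref{eq:intermediate-diam} follows by integrating the derivative bound along intervals.

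The main obstacle is the replacement for Proposition~\ref{prop:spherical-koebe}. Along orbits staying in $O$, $f$ is $C^3$ with $|Df|$ bounded away from zero. The standard distortion argument then applies: $\log|Df|$ is Lipschitz on $O$, and by \eqref{eq:intermediate-diam} the images $f^k\phi(B(y,2\delta))$ have exponentially shrinking lengths. Summing gives a uniform bound
\[
 \Bigl|\log\frac{|D\phi(z)|}{|D\phi(w)|}\Bigr|\le \operatorname{Lip}(\log|Df|\,|_O)\sum_{k}2A^{-1}\lambda^{-k}\cdot 4\delta=:\log C .
\]
This bounded distortion is easier than the complex Koebe theorem, since no Schwarzian hypothesis is needed. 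It yields \eqref{eq:large-image}, \eqref{eq:small-image} and \eqref{eq:branch-lip} with $a,H,Q$ depending only on $C$, and also \eqref{eq:diam-global}. One subtlety is that $p_\phi$ may lie near $\partial I$, so that $\phi(B(y,\delta))$ is only a one-sided neighbourhood of $p_\phi$. I would handle this by intersecting all balls with $I$ throughout, which only affects constants.

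With these estimates the counting Lemma~\ref{lem:countK} carries over verbatim. At a fixed time, two active branches would have $W_{\phi_2}\subset\phi_1(B(y,\delta))$, and injectivity of $f^n$ there forces equality. Across times, the ratio bound $D_{\phi_2}/D_{\phi_1}\le M\lambda^{-(n_2-n_1)}$ limits the time spread. The analogue of Lemma~\ref{lem:captureK} follows from the inverse-branch lemma. Finally, run the potential game exactly as in the proof of Proposition~\ref{prop:expanding}: choose $\eta$ with $L_\beta(\eta/\beta)^s\le\beta^s$, then $t$ and $\epsilon$ with $H\epsilon/t\le\eta$. At turn $i$, delete the balls $\overline B_I(p_\phi,\eta D_\phi)$ with $r_i\le D_\phi<r_{i-1}$ meeting $B_i$. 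If $x_\infty\in K$ has $f^n(x_\infty)\in B(y,\epsilon)$, it lies in a deleted ball, a contradiction. Proposition~\ref{prop:conversion}, valid on $I$ since $I$ is complete and doubling, then gives absolute winning.
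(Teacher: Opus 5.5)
Your proposal is correct and follows the paper's proof. The paper also replaces the spherical Koebe estimates by bounded distortion from $\sup_O|D\log|Df||<\infty$, summed along the exponentially shrinking images $f^k\phi(B(y,2\delta))$, and then runs Lemma~\ref{lem:countK} and the potential game unchanged. The only difference is at the endpoints: the paper extends $f$ as a $C^3$ map beyond $0,1$ (using $Df(0),Df(1)\neq0$) and keeps only branches with $\phi(y)\in I$. You instead work with one-sided neighbourhoods $B(\cdot,r)\cap I$ directly, which is equally valid because $f(\partial I)\subset\partial I$ and $Df\neq0$ on $O$.
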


\begin{proof}
As in Proposition~\ref{prop:expanding}, the case $K=\varnothing$
or $y\notin K$ is trivial, so we assume that $y\in K$.
There are two points requiring modification. First, for inverse
branches within $I$, we use the real Koebe principle
\cite[Proposition~1]{BRLSvS08} in place of Proposition~\ref{prop:spherical-koebe} to obtain the real analogues of \eqref{eq:large-image}--\eqref{eq:branch-lip}.

Second, for $x\in K$, the interval $B(f^n(x),4\delta)$ may extend
outside $I$ when $f^n(x)$ is close to an endpoint.
Since $Df(0),Df(1)\ne0$, we extend $f$ as a $C^3$ map locally
beyond the endpoints before applying the argument of
Lemma~\ref{lem:extension}, taking $O$ to be an open neighbourhood
of $K$ in $\mathbb R$ and choosing $\delta>0$ sufficiently small.
Choose $O$ so that $L:=\sup_O|D\log|Df||<\infty$, and let
$A,\lambda$ be the corresponding expansion constants.
For each extended inverse branch $\phi$ of $f^n$ on $B(y,2\delta)$
whose intermediate images lie in $O$, the chain rule and
\eqref{eq:segment-expansion} give, for $u_1,u_2\in B(y,2\delta)$,
\[
 \left|\log\frac{|D\phi(u_1)|}{|D\phi(u_2)|}\right|
 \le L\sum_{k=0}^{n-1}|f^k(\phi(u_1))-f^k(\phi(u_2))|
 \le\frac{L}{A(\lambda-1)}|u_1-u_2|.
\]
Thus the distortion is bounded independently of $n$ and $\phi$,
and the real analogues of
\eqref{eq:large-image}--\eqref{eq:branch-lip} follow.
Alice chooses only balls whose centres $\phi(y)$ lie in $I$.
The branch supplied by the real analogue of
Lemma~\ref{lem:captureK} satisfies this condition, since
$f^n(\partial I)\subset\partial I$.
The proofs of Lemma~\ref{lem:countK} and
Proposition~\ref{prop:expanding} then remain valid.

 \end{proof}

For the remainder of this subsection, assume that
$f\in\sA_{\mathbb R}$ is backward contracting.
The real versions of Proposition~\ref{prop:rls-input} follow from
\cite[Theorem~A, Lemma~3.13 and Corollary~6.3]{RLS}; here topological
exactness on $I=J(f)$ implies essential topological exactness.
In particular, given $\Lambda>0$, one can choose $R>4$ arbitrarily
large such that, for every sufficiently small $u>0$, there is a
nice couple $(\hV,V)$ with $V$ $\Lambda$-nice and, for every
$c\in\Crit(f)$,
\begin{equation}\label{eq:real-sandwich}
 \tB(c,Ru/2)\subset\hV^c\subset\tB(c,Ru), \ 
 \tB(c,u)\subset V^c\subset\tB(c,2u).
\end{equation}

Here $\Lambda$-niceness is understood as follows: $V$ is
$\Lambda$-nice if $(1+2\Lambda)Q\subset V^c$ for each return domain
$Q\subset V^c$, where $(1+2\Lambda)Q$ denotes the interval with the
same centre as $Q$ and length $(1+2\Lambda)|Q|$. This definition is
equivalent to the modulus-based definition of $\Lambda$-niceness used in
Section~\ref{sec:critical}; see also~\cite{RLS}. The parameter $\Lambda$
may however differ.

Moreover,
\begin{equation}\label{eq:real-conical}
 \HD(I\setminus \Jcon(f))=0,
\end{equation}
where balls in the definition of $\Jcon(f)$
are understood as $B(z,\rho)\cap I$. The proof of
Lemma~\ref{lem:small-pullbacks} applies to the real case
as well, with diameter replaced by length.

Fix $R>4$ and $u>0$ small, and let $(\hV,V)$ be a nice couple
satisfying \eqref{eq:real-sandwich}.
For each $c\in\Crit(f)$ and $n\ge0$, consider all diffeomorphic
inverse branches $\phi$ of $f^n$ on $\hV^c$. Put
\[
 W=\phi(V^c), \ p_W=\phi(c), \ D_W=|W|.
\]
Let $\cW$ be the collection of all these domains, for all
$c\in\Crit(f)$, $n\ge0$, and such branches $\phi$.
For $n=0$, the inverse branch is the identity, so $V^c\in\cW$
for every $c\in\Crit(f)$. The family $\cW$ is countable.
As in Section~\ref{sec:critical}, each $W\in\cW$ has a unique
pull-back time $n$, with $f^n(W)=V^c$. Hence
$p_W=(f^n|_W)^{-1}(c)$ is well defined.

\begin{lemm}\label{lem:real-geometry}
There exist $\Lambda>0$, $R>4$, $\hat u>0$, $0<\xi<1$,
$C\ge1$, and an integer $N_{\mathrm r}\ge1$ with the following property.
For each $u\in(0,\hat u)$ and each nice couple $(\hV,V)$
satisfying \eqref{eq:real-sandwich} and such that $V$ is $\Lambda$-nice,
the following estimates hold for every $W=\phi(V^c)\in\cW$
with $c\in\Crit(f)$ and every $0<d<u$:
\begin{equation}\label{eq:real-inner}
 B(p_W,\xi D_W)\subset W,
\end{equation}
\begin{equation}\label{eq:real-hole}
 \phi(\tB(c,d))\subset B\bigl(p_W,C(d/u)^{1/\ell}D_W\bigr).
\end{equation}
If $W',W\in\cW$ and $W'\subsetneq W$, then
\begin{equation}\label{eq:real-nested}
 D_{W'}\le\tfrac12 D_W.
\end{equation}
Moreover, for each $x\in I$, each $\rho>0$, $0<r\le\rho$,
and $0<\eta\le1$, there are at most $N_{\mathrm r}$ domains $W\in\cW$
satisfying
\begin{equation}\label{eq:real-band}
 \rho\le D_W<2\rho, \ 
 \overline B_I(p_W,\eta D_W)\cap\overline B_I(x,r)\ne\varnothing.
\end{equation}
\end{lemm}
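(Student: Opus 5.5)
I will follow the proof of Lemma~\ref{lem:critical-geometry}, replacing each complex tool by its real counterpart: the real Koebe principle \cite[Proposition~1]{BRLSvS08} for the spherical Koebe estimates, and the interval form of $\Lambda$-niceness for the modulus estimate of Lemma~\ref{lem:modulus-diameter}.

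First, the constants. Using \eqref{eq:real-local}, I choose $R$ so large that
\[
 4a_12^{1/\ell_c}\le a_0(R/2)^{1/\ell_c}\quad\text{for every }c\in\Crit(f).
\]
Then $B(c,a_0u^{1/\ell_c})\subset V^c\subset B(c,a_1(2u)^{1/\ell_c})$, and $\hV^c$ contains the interval $B(c,4a_1(2u)^{1/\ell_c})$. Hence $\hV^c\setminus V^c$ has two components, each of length comparable to $|V^c|$. Each $\phi$ is a diffeomorphic inverse branch of $f^n$ on $\hV^c$, so $f^n|_{\phi(\hV^c)}$ is a diffeomorphism. The real Koebe principle then bounds the distortion of $\phi$ on $B(c,2a_1(2u)^{1/\ell_c})$ by a constant $K_0$ depending only on $f$ and $R$. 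Two hypotheses have to be checked here. The first is the smoothness needed by Koebe; it holds because $f$ is $C^3$ off $\Crit(f)$ and $\phi$ avoids critical points. The second is a smallness condition on the pulled-back intervals. For this I take $\hat u$ small, using the real version of Lemma~\ref{lem:small-pullbacks}.

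Bounded distortion gives
\[
 K_0^{-1}|D\phi(c)|u^{1/\ell_c}\le D_W\le K_0|D\phi(c)|u^{1/\ell_c}.
\]
It also gives that $\phi(B(c,a_0u^{1/\ell_c}))$ contains an interval around $p_W$ of radius $K_0^{-1}|D\phi(c)|u^{1/\ell_c}$. Together these yield \eqref{eq:real-inner} with $\xi=1/(2K_0^2)$. For \eqref{eq:real-hole}, I use $\tB(c,d)\subset B(c,a_1d^{1/\ell_c})$ together with the mean value theorem and the distortion bound. This gives $|\phi(z)-p_W|\le K_1|D\phi(c)|d^{1/\ell_c}$. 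Since $d/u<1$, we have $(d/u)^{1/\ell_c}\le(d/u)^{1/\ell}$, and \eqref{eq:real-hole} follows with $C=\max\{1,K_0K_1\}$.

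For the nesting estimate \eqref{eq:real-nested}, let $W'\subsetneq W$ with pull-back times $n<m$. As before, $f^n(W')$ is a positive-time pull-back of $V$ inside $V^c$, so it lies in some return domain $Q$. By $\Lambda$-niceness, $(1+2\Lambda)Q\subset V^c$, so $Q$ sits inside $V^c$ with collars on both sides of length at least $\Lambda|Q|$. I now pull back by $\phi$, which has distortion bounded by $K_0$ on $V^c$ (in fact on $\hV^c$). The interval $\phi(Q)\supset W'$ then has collars in $W$ of length at least $K_0^{-2}\Lambda|\phi(Q)|$. Hence
\[
 D_W\ge(1+2K_0^{-2}\Lambda)D_{W'}.
\]
The point to watch is that $K_0$ is fixed by $R$ alone, before $\Lambda$ is chosen. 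I then choose $\Lambda$ with $2K_0^{-2}\Lambda\ge1$, and Proposition~\ref{prop:rls-input}(2), in its real form, still provides $\Lambda$-nice couples for this $R$. This ordering of constants ($R$, then $K_0$, then $\Lambda$, then $\hat u$) is the main obstacle. The real Koebe principle needs the space $\hV^c\setminus V^c$ to be controlled independently of $\Lambda$, and that control comes from the sandwich \eqref{eq:real-sandwich}.

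Finally, the counting bound. Suppose distinct $W,W'$ both satisfy \eqref{eq:real-band}. Pull-backs of a nice set are nested or disjoint, and \eqref{eq:real-nested} rules out nesting when both lengths lie in $[\rho,2\rho)$; so $W$ and $W'$ are disjoint. Each such $W$ satisfies $|p_W-x|\le r+\eta D_W<3\rho$, and by \eqref{eq:real-inner} it contains $B(p_W,\xi\rho)$. These intervals are disjoint, have length $2\xi\rho$, and lie in $B(x,4\rho)$. Comparing lengths gives
\[
 N_{\mathrm r}=\lceil 4/\xi\rceil,
\]
which is independent of $u,\rho,r,\eta$.
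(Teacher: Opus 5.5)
Your route is the paper's route. You use the same threshold on $R$ giving $B(c,4b_c)\subset\hV^c$ with $b_c=a_1(2u)^{1/\ell_c}$, the real Koebe principle on $B(c,2b_c)$, and length comparison in place of moduli for \eqref{eq:real-nested}. Your collar estimate is equivalent to the paper's $|W'|/|W|\le K_0|Q|/|V^c|\le K_0/(1+2\Lambda)$. The counting step is the same, with $N_{\mathrm r}=\lceil4/\xi\rceil$.

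The step that needs correcting is exactly the one you single out as the main obstacle: your order of constants is backwards. Proposition~\ref{prop:rls-input}(2), in its real form, takes $\Lambda$ as input and returns $R$, which may be taken arbitrarily large. It does not say that an $R$ fixed in advance admits $\Lambda$-nice couples for whatever $\Lambda$ you choose later. So ``still provides $\Lambda$-nice couples for this $R$'' is unjustified. Moreover, if $K_0$ really depended on $R$, as you state, then choosing $\Lambda$ from $K_0(R)$ and afterwards $R$ from $\Lambda$ would be circular as written.

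The repair is already contained in your set-up. You only apply Koebe to $f^n$ on $\phi(B(c,4b_c))$, with $B(c,2b_c)$ $\tfrac12$-well inside $B(c,4b_c)$. Hence $K_0$ does not depend on $R$ at all once $R$ exceeds the threshold $4a_12^{1/\ell_c}\le a_0(R/2)^{1/\ell_c}$. The paper records this explicitly and obtains the absolute constant $K_0=10$ from \cite[Proposition~1]{BRLSvS08}. The correct order is:
\begin{enumerate}
\item $K_0$;
\item $\Lambda$ (for you $\Lambda\ge K_0^2/2$, for the paper $\Lambda\ge K_0$);
\item $R$, large enough both for the threshold and for Proposition~\ref{prop:rls-input}(2) with this $\Lambda$;
\item $\hat u$.
\end{enumerate}

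Separately, your parenthetical claim that the distortion bound holds ``in fact on $\hV^c$'' is false, since there is no Koebe space outside $\hV^c$. This does no harm, because you only use the bound on $V^c\subset B(c,2b_c)$. With these corrections your proof goes through.
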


\begin{proof}
We follow the proof of Lemma~\ref{lem:critical-geometry}, with two
changes. First, let $a_0,a_1$ be the constants in
\eqref{eq:real-local}, put $b_c=a_1(2u)^{1/\ell_c}$, and choose $R$
so large that
\[
 4a_1 2^{1/\ell_c}\le a_0(R/2)^{1/\ell_c}
 \ \text{for every }c\in\Crit(f).
\]
For every sufficiently small $u>0$, we then have
\[
 B(c,a_0u^{1/\ell_c})\subset V^c\subset B(c,b_c), \ 
 B(c,4b_c)\subset\hV^c.
\]
By the real Koebe
principle~\cite[Proposition~1]{BRLSvS08}, applied to $f^n$ on
$\phi(B(c,4b_c))$ with $B(c,2b_c)$ $1/2$-well inside $B(c,4b_c)$,
we obtain
\[
 K_0^{-1}\le\frac{|D\phi(z)|}{|D\phi(w)|}\le K_0
 \ \text{for }z,w\in B(c,2b_c), \ 
 K_0=\frac{1}{0.9}\left(\frac{1+1/2}{1/2}\right)^2=10.
\]
In particular, $K_0$ is
independent of $\Lambda,R,u,c,n$ and $\phi$. Since $V^c\subset B(c,b_c)$,
for every interval $L\subset V^c$ we obtain
\begin{equation}\label{eq:real-length}
 K_0^{-1}\frac{|L|}{|V^c|}
 \le\frac{|\phi(L)|}{|\phi(V^c)|}
 \le K_0\frac{|L|}{|V^c|}.
\end{equation}
Since $2a_0u^{1/\ell_c}\le |V^c|\le4a_1u^{1/\ell_c}$,
\eqref{eq:real-length} gives \eqref{eq:real-inner} and
\eqref{eq:real-hole} with
\[
 \xi=\min\{1/2,a_0/(4K_0a_1)\}, \ C=\max\{1,K_0a_1/a_0\}.
\]

Second, we use \eqref{eq:real-length} to compare nested intervals,
in place of conformal invariance of modulus. Let
$W=\phi(V^c)$ and $W'=\phi'(V^{c'})\subsetneq W$, with
pull-back times $n<m$, respectively. The same nesting argument
shows that $L=f^n(W')$ is contained in a return domain $Q\subset V^c$.
By the real $\Lambda$-niceness condition, we have
\[
 \frac{|L|}{|V^c|}\le\frac{|Q|}{|V^c|}\le\frac{1}{1+2\Lambda}.
\]
Hence by \eqref{eq:real-length},
\[
 \frac{|W'|}{|W|}\le\frac{K_0}{1+2\Lambda}\le\frac12
\]
provided $\Lambda\ge K_0$, proving \eqref{eq:real-nested}.
Finally, as in Lemma~\ref{lem:critical-geometry}, the intervals
$B(p_W,\xi\rho)$ for domains satisfying \eqref{eq:real-band}
are pairwise disjoint and contained in $B(x,4\rho)$.
Length comparison gives at most $4/\xi$ such domains, so we may
take $N_{\mathrm r}=\lceil4/\xi\rceil$, independently of $u,\rho,r,\eta$.
\end{proof}

\begin{lemm}\label{lem:real-entry}
Let $R>4$ and $u>0$ be small, and let $(\hV,V)$ be a nice couple
satisfying \eqref{eq:real-sandwich}. Let $0<d<u$.
Assume that, for each $c\in\Crit(f)$, $m\ge0$, and each pull-back
$Q$ of $\tB(c,Ru)$ by $f^m$,
\begin{equation}\label{eq:real-two-scales}
 \dist(Q,f(\Crit(f)))\le d \ \Longrightarrow \  |Q|<d.
\end{equation}
If $w\in I$, $t\ge0$, and $c\in\Crit(f)$ satisfy
\[
 f^t(w)\in\tB(c,d), \ 
 f^k(w)\notin\tB(\Crit(f),d) \ \text{for }0\le k<t,
\]
then $f^t:P\to\hV^c$ is a diffeomorphism, where $P$ is
the connected component of $f^{-t}(\hV^c)$ containing $w$.
\end{lemm}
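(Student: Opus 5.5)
The plan is to mirror the proof of Lemma~\ref{lem:first-entry} in the one-dimensional setting. The case $t=0$ is trivial. For $t\ge1$, I would let $P_k$ denote the connected component of $f^{-(t-k)}(\hV^c)$ containing $f^k(w)$, for $0\le k\le t$. Thus $P_t=\hV^c$, $P_0=P$, each $P_k$ is an interval (relatively open in $I$), and $f(P_k)\subset P_{k+1}$. Since $f$ is continuous and $P_k$ is a component of $f^{-1}(P_{k+1})$, we in fact have $f(P_k)=P_{k+1}$, because a component of a preimage of an interval maps onto it. The one exception is when an endpoint of $I$ intervenes; this is handled by $f(\{0,1\})\subset\{0,1\}$ and by the fact that the boundary behaviour only matters for monotonicity. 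It therefore suffices to show that no $P_k$ with $k<t$ contains a critical point. Then each $f|_{P_k}$ is a $C^1$ map with nonvanishing derivative on an interval, hence a diffeomorphism onto its image $P_{k+1}$, and composing gives that $f^t:P\to\hV^c$ is a diffeomorphism.

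Suppose, for contradiction, that $c'\in\Crit(f)\cap P_k$ for some $k<t$. Then $f(c')\in P_{k+1}$. Let $Q$ be the pull-back of $\tB(c,Ru)$ by $f^{t-k-1}$ containing $P_{k+1}$; it exists because $\hV^c\subset\tB(c,Ru)$ by \eqref{eq:real-sandwich}. Since $Q$ contains $f(c')\in f(\Crit(f))$, we have $\dist(Q,f(\Crit(f)))=0\le d$, and \eqref{eq:real-two-scales} gives $|Q|<d$. Hence $P_{k+1}\subset B(f(c'),d)\cap I$. Since $P_k$ is connected, contains $c'$, and maps into $B(f(c'),d)\cap I$, it lies in the component of $f^{-1}(B(f(c'),d)\cap I)$ containing $c'$, namely $\tB(c',d)$. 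This gives $f^k(w)\in\tB(c',d)\subset\tB(\Crit(f),d)$, contradicting the first-entry hypothesis. Unlike the complex case, every critical point lies in $I=J(f)$, so no separate hypothesis on critical points outside the Julia set is needed.

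The main point needing care is the interval bookkeeping. One must check that pull-backs are taken inside $I$, so that components are relatively open subintervals, and that $f(P_k)=P_{k+1}$, so that the absence of critical points really yields a diffeomorphism onto all of $\hV^c$ rather than onto a proper subset. I would argue surjectivity as follows. If $f(P_k)$ were a proper subinterval of $P_{k+1}$, then some endpoint of $f(P_k)$ interior to $P_{k+1}$ would be the image of an endpoint of $P_k$. That endpoint of $P_k$ would then also lie in $f^{-1}(P_{k+1})$, contradicting maximality of the component, unless it is a point of $\partial I$. In that case invariance of $\{0,1\}$ shows that its image is an endpoint of $I$, hence an endpoint of $P_{k+1}$ as well.
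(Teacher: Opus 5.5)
Your proposal is correct and follows the paper's proof, which simply reruns the argument of Lemma~\ref{lem:first-entry} and uses $f(\partial I)\subset\partial I$ together with the fact that $\hV^c$ lies in the interior of $I$ to see that no pull-back of $\hV^c$ meets $\partial I$. This also makes your separate treatment of endpoints lying in $\partial I$ unnecessary, since maximality of the component applies to any endpoint that lies in $I$.

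One correction: your first-paragraph claim that a component of the preimage of an interval always maps onto it is false near a critical point. For example, under $x\mapsto x^2$ the component $(-1/\sqrt{2},1/\sqrt{2})$ of the preimage of $(-1/2,1/2)$ maps onto $[0,1/2)$. This does not damage the proof, because excluding critical points uses only $f(P_k)\subset P_{k+1}$. Once critical points are excluded, your final monotonicity-plus-maximality argument correctly gives $f(P_k)=P_{k+1}$.
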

\begin{proof}
The proof of Lemma~\ref{lem:first-entry} applies without change:
no pull-back interval meets $\partial I$, since
$f(\partial I)\subset\partial I$ and $\hV^c$ lies in the interior of $I$.
\end{proof}

\subsection{Proof of the interval theorem}

\begin{proof}[Proof of Theorem~\ref{thm:real}]

We first prove that $\Ecrit$ is absolute winning on $I$, where
\[
 \Ecrit=\{x\in I:\inf_{n\ge0}\dist(f^n(x),\Crit(f))>0\}
 =E_I(f,\Crit(f)).
\]
With \eqref{eq:real-conical}, Lemma~\ref{lem:real-geometry}, and
Lemma~\ref{lem:real-entry} in place of \eqref{eq:nonconical},
Lemma~\ref{lem:critical-geometry}, and Lemma~\ref{lem:first-entry},
respectively, the proof of Proposition~\ref{prop:critical-winning}
remains valid for the game on $I$.
Consequently, $\Ecrit$ is absolute winning on $I$.

Finally, for each integer $m\ge1$, put
\[
 K_m=\{x\in I:\dist(f^n(x),\Crit(f))\ge m^{-1}
       \text{ for every }n\ge0\}.
\]
Then $\Ecrit=\bigcup_{m\ge1}K_m$, and each $K_m$ is compact and
forward invariant. Expansion away from critical points implies
that $f$ is uniformly expanding on every nonempty $K_m$.
Proposition~\ref{prop:real-expanding}
and the same argument as in the proof of
Theorem~\ref{thm:reduction} then imply that
$E_I(f,y)$ is absolute winning on $I$ for every $y\in I$.
\end{proof}

\bigskip
\needspace{4\baselineskip}
{\small\raggedright
\noindent School of Mathematics and Statistics, Zhengzhou University,\\
Zhengzhou 450001, CHINA\par
\noindent E-mail: \href{mailto:jihymath@zzu.edu.cn}{jihymath@zzu.edu.cn}\par
}
\end{document}